\documentclass{amsart}

\usepackage{graphicx}              
\usepackage{amsmath}               
\usepackage{amsfonts}              
\usepackage{amsthm}                
\usepackage{amssymb}
\usepackage{amscd}
 \usepackage{listings}
\usepackage{times}
\usepackage{comment}
\usepackage{mathtools}
\usepackage{tikz}
\usepackage{relsize}

\usepackage{dsfont}
\usepackage{stmaryrd}
\usepackage{xcolor}

\theoremstyle{definition}
\newtheorem{thm}{Theorem}[section]

\newtheorem{prop}[thm]{Proposition}
\newtheorem{cor}[thm]{Corollary}

\newtheorem{example}[thm]{Example}
\newtheorem{question}[thm]{Question}

\newcommand{\1}{\mathds{1}}
\newcommand{\OO}{\mathcal{O}}

\newtheorem{definition}[thm]{Definition}

\newtheorem{rem}[thm]{Remark}

\newcommand{\End}{\mathrm{End}}
\newcommand{\h}{\mathfrak{h}} 
\newcommand{\A}{\mathbf{A}}

\DeclareMathOperator{\E}{\mathlarger{ \mathbb{E}}}

\newcommand{\barchi}{\overline{\chi}}
\newcommand{\bareta}{\overline{\eta}}

\newcommand{\Ind}{\operatorname{Ind}}
\newcommand{\p}{\mathfrak{p}}

\newcommand{\Z}{\mathbf{Z}}
\newcommand{\Q}{\mathbf{Q}}
\newcommand{\R}{\mathbf{R}}

\newcommand{\C}{\mathbf{C}}
\newcommand{\F}{\mathbf{F}}

\newcommand{\K}{\mathbf{K}}
\newcommand{\Op}{\operatorname{Op}}

\newcommand{\GL}{\mathrm{GL}}
\newcommand{\PGL}{\mathrm{PGL}}
\newcommand{\Hom}{\mathrm{Hom}}

\newcommand{\SO}{\mathrm{SO}}

\newcommand{\g}{\mathfrak{g}}

\renewcommand{\O}{\mathcal{O}}

\newtheorem{lemma}{Lemma}

\usepackage{hyperref}
\hypersetup{
    colorlinks,
    citecolor=blue,
    filecolor=blue,
    linkcolor=blue,
    urlcolor=blue
}

\begin{document}

\author{Trajan Hammonds}
\title{Relative Character Asymptotics Beyond Stability for $\PGL_2 \times \GL_1$}
\begin{abstract}
The asymptotics of relative characters for real Lie groups were studied for representations $(\pi, \sigma)$ arising from Gan-Gross-Prasad pairs $(G,H)$ by Nelson and Venkatesh. They successfully compute the asymptotics of relative characters whenever the conductor of the associated Rankin-Selberg $L$-function $L(\pi \boxtimes \sigma^\vee)$ lies in a stable locus, i.e. away from conductor dropping. In this paper, we express asymptotics for relative characters in the \textit{non-archimedean} setting for $(\PGL_2, \GL_1)$. The key new innovation is that our method overcomes the stability hypothesis and allows for significant conductor dropping. 
\end{abstract}

\maketitle
\tableofcontents

\section{Introduction}
Let $G = \PGL_2(\F)$ and $H = \GL_1(\F)$ embedded diagonally in $G$, with $\F$ a non-archimedean local field like $\Q_p$. For a representation $\pi$ of $G$ and a character $\chi$ of $H$ our two main objects of study will be the integral operator $\Op(a)$ and the linear functional $\ell_\chi$ defined as follows: For a function $a \colon \mathrm{Lie}(G)^\ast \to \C$, we define
\begin{align*}
    \Op(a) := \int_{x \in \mathrm{Lie}(G)} a^\vee(x) \pi(\exp x)\,dx.
\end{align*}
(See Section \ref{sec:functions} for more details). Meanwhile, the linear functional $\ell_\chi : \pi \to \C$ is given by 
\begin{align*}
    \ell_\chi(v) := \int_{\F^\times} W_v\begin{pmatrix} h & \\ & 1 \end{pmatrix} \overline{\chi}(h)\,d^\times h, 
\end{align*}
where $W_v$ is the image of $v$ in its Whittaker model (See Section \ref{sec:functionals} for more details). In this setting, we define the \textit{relative character} as the distribution 
\begin{equation}\label{eqn:relchar}
 \mathcal{H}_{\pi, \chi}(a) = \sum_{v \in \mathcal{B}(\pi)} \ell_\chi(\Op(a)v)\overline{\ell_\chi(v)}.
\end{equation}

In Harish-Chandra's character theory, the asymptotics of the distributional character $\Theta_\pi$ of an irreducible representation $\pi$ of a $p$-adic group $G$ are controlled by the geometry of the nilpotent cone. In particular, let $\g$ be the Lie algebra of $G$ and $\F$ a $p$-adic field. Let $\mathcal{N}$ be the nilpotent cone of $\g$ and $\mathcal{N}/G$ the (finite) set of $G$-conjugacy classes. For $\OO \in \mathcal{N}/G$, denote $\mu_\OO$ the nilpotent orbital integral. Harish-Chandra showed that for $f$ supported sufficiently close (in terms of $\pi$) to the identity in $G$, 
\begin{equation}
    \Theta_\pi(f) := \mathrm{tr}(\pi(f)) = \sum_{\OO \in \mathcal{N}/G} c_\OO(\pi)\mu_\OO(a),
\end{equation}
where $a$ is such that $a^\vee$ is the Fourier transform of the pullback of $f$ to $\g$ under the exponential map. 
 One can think of the distributional character as given by the following composition
 \begin{equation}
 \Theta_\pi : C_c^\infty(G) \xrightarrow[]{f \mapsto \pi(f)} \End(\pi) \xrightarrow[]{T \mapsto \sum_{v} Tv \otimes v} \pi \otimes \overline{\pi} \xrightarrow[]{v_1 \otimes \overline{v_2} \mapsto \langle v_1, v_2 \rangle} \C.
 \end{equation}
 These characters are related to geometry via the Kirillov character formula. It says for an irreducible representation $\pi$ of $G$, one can attach a coadjoint orbit (or finite union) $\OO_\pi \subseteq \mathrm{Lie}(G)^\ast$  to $\pi$ such that 
 \begin{equation}
 \Theta_\pi(f) = \int_{\OO_\pi} (a^\vee j^{-1/2})^\vee
 \end{equation}
 where $j$ is the Jacobian of the exponential map $\exp : \mathrm{Lie}(G) \to G$. Kirillov's formula gives an explicit analogue of Harish-Chandra's character expansion by providing a uniform asymptotic at scales $x \ll 1/T$ where $T$ is the highest weight of the representation $\pi$. 
 
 For a symmetric pair $(G,H)$, the analogous distribution is the \textit{relative character} $\mathcal{H}_{\pi, \sigma}$ attached to a pair of representations $(\pi, \sigma)$ of $(G, H)$. One can define $\mathcal{H}_{\pi, \sigma}$ completely analogously to $\mathcal{H}_{\pi, \chi}$ above by replacing the functional $\ell_\chi \in \Hom_H(\pi, \C)$ with $\ell_\sigma \in \Hom_H(\pi, \sigma)$. However, we give an alternative definition below, after first stating a central question:

\begin{question}\label{question1}
  What are the asymptotics of relative characters? Is there an analogous expansion into geometric orbits?
\end{question}

 To help answer this question, we give an alternative definition of the relative character. Given a tempered irreducible representation $\pi$ of $G$, we can decompose it along a subgroup $H$, by 
 \begin{equation}
 \pi\mid_H = \int_\sigma m_\pi(\sigma) \sigma \,d\sigma,
 \end{equation}
 where $\sigma$ ranges over tempered irreducible representations of $H$. Then the inner product 
 \begin{equation}
 \langle v_1, v_2 \rangle = \int_\sigma m_\pi(\sigma) \mathcal{H}_{\pi,\sigma}(v_1 \otimes v_2)\,d\sigma,
 \end{equation}
 where $\mathcal{H}_{\pi,\sigma}(v_1 \otimes v_2)$ is the relative character given by 
 \begin{equation}
 \int_H \langle \pi(h)v_1, v_2 \rangle \overline{\chi_\sigma(h)}\,d\sigma. 
 \end{equation}
 From this, we obtain a distribution by the composition 
 \begin{equation}
 \mathcal{H}_{\pi, \sigma} : C_c^\infty(G) \to \End(\pi) \cong \pi \otimes \overline{\pi} \xrightarrow[]{\mathcal{H}_{\pi, \sigma}} \C.
 \end{equation}
 Thus, $\mathcal{H}_{\pi, \sigma}(f) = \int_H \rm{tr}(\pi(h)\pi(f))\overline{\chi_\sigma(h)}\,dh$. Now assuming for the moment that $\exp : \g \to G$ is an isomorphism with trivial Jacobian, we'd expect by the Kirillov formula, that
\begin{align}
  \mathcal{H}_{\pi, \sigma}(f) &= \int_{h \in H} \mathrm{tr}(\pi(h)\pi(f))\overline{\chi_\sigma}(h)\,dh = \int_{x \in \g} a^\vee(x) \int_{y \in \h} \chi_\pi(e^{y+x})\chi_\sigma(e^{-y})\,dx\,dy \\
                     &= \int_{x \in \g} a^\vee(x)\int_{y \in \h} \int_{\xi \in \OO_\pi} e^{i(x+y)\xi} \int_{\eta \in \OO_\sigma} e^{-iy\eta} = \int_{\OO_{\pi, \sigma}}a
\end{align}
where $\OO_{\pi, \sigma}$ is the set of matrices $\xi \in \OO_\pi$ whose restriction $\xi_H$ to $\h^\ast$ is $\eta \in \OO_\sigma$. So it seems that the \textit{relative} coadjoint orbits $\OO_{\pi, \sigma}$ may be good candidates for the geometric orbits we are looking for. We call $\OO_{\pi, \sigma}$ stable if it is an $H$-torsor. Question \ref{question1} is subtle even in small rank. We begin with an example illustrating the challenges that arise beyond the stable setting. 

\begin{example}\label{ex:nilcones}
    Let $(G,H) = (\GL_3(\R), \GL_2(\R))$ and pick representations $\pi$ and $\sigma$ so that the coadjoint orbits are the full nilpotent cones, i.e. $\OO_\pi = \mathcal{N} \subset \g^\ast$ and $\OO_\sigma = \mathcal{N}_H \subset \h^\ast$. The \textit{relative} coadjoint orbit is the set 
    \begin{equation}
        \OO_{\pi, \sigma} = \mathcal{N} \cap \rm{pr}^{-1}(\mathcal{N}_H)  = \left\{ \xi =
      \begin{pmatrix}
        A & b \\
        c & d
      \end{pmatrix}  \in \mathfrak{sl}_3(\mathbb{R}) :
      \xi,A
      \text{ are nilpotent}
      \right\} \subset \g^\ast, 
    \end{equation}
    where $\mathrm{pr} : \g^\ast \to \h^\ast$ is the natural projection map. 
    There are infinitely many $H$-orbits on $\OO_{\pi, \sigma}$ given by the representatives $\left\{\begin{pmatrix} 0 & 1 & b \\ 0 & 0 & 0 \\ 0 & c & 0 \end{pmatrix}\right\}$. 
  \end{example}

Therefore, due to the geometry of $\OO_{\pi, \sigma}$ in this example, there is no evident candidate for a single geometric orbit that might control the asymptotics of $\mathcal{H}_{\pi, \sigma}$. It's not exactly clear what the correct asymptotic picture should be for this pair, yet we remain hopeful that the techniques from this paper could elucidate the situation. 

For $p$-adic symmetric spaces, Rader and Rallis \cite{RR} developed an analogue of Harish-Chandra's character theory.  For $H$-distinguished representations $\pi$ of $G$, they show that on the $H$-regular semisimple locus, relative characters are locally constant and near the identity, they admit a germ expansion analogous to the Harish-Chandra case via
\begin{equation}
   H_{\pi}( \exp X )
   \;\sim\;
   \sum_{\mathcal{O}\subset\mathcal{N}(\mathfrak{s})}
   c_{\mathcal{O}}(\pi)\,\widehat{\mu}_{\mathcal{O}}(X),
\end{equation}
where $\mathfrak{s}$ is the $(-1)$-eigenspace of the involution defining the
symmetric pair, $\mathcal{N}(\mathfrak{s})$ is its nilpotent cone, and
$\widehat{\mu}_\mathcal{O}$ are Fourier transforms of $H$-invariant nilpotent orbital integrals. Despite this, a systematic theory that includes cases like the example above is still elusive, owing to the unstable geometry of the relative coadjoint orbit $\OO_{\pi, \sigma}$. However, in some (stable) cases, there is a general theory due to the work of Nelson and Venkatesh \cite{NV}. For real Gan--Gross--Prasad pairs such as $(\SO_{n+1},\SO_n)$ and $(\mathrm{U}_{n+1},\mathrm{U}_n)$,
Nelson and Venkatesh developed a theory of relative characters using the orbit method and microlocal analysis. For a stable pair of representations $(\pi_T, \sigma_T)$, indexed by their highest weight, Nelson and Venkatesh study the asymptotic behavior of $\mathcal{H}_{\pi, \sigma}(a_T)$ for a $T$-dependent function $a$. They prove for any $a$ supported in a fixed compact subset $U \subset \g^\ast$ of stable elements, as $T \to \infty$
\begin{align*}
    \mathcal{H}_{\pi, \sigma}(a_T) = \int_{\OO_{\pi, \sigma}} a + O\left(T^{2\delta - 1}\right)
\end{align*}
for some fixed $0 \leq \delta < 1/2$.  Outside the stable locus, the relative orbit becomes singular and the analysis degenerates. This motivates studying special cases as a step towards broader understanding to what extent one should expect the asymptotics of relative characters to be governed by a single geometric orbit even when stability fails. We can now state our main theorem which provides an instance of a relative character asymptotic  in a non-stable regime. 

\begin{thm}\label{thm:main theorem}
 Let $\pi$ be a principal series or supercuspidal unitary representation of $G$ and $\chi$ a unitary character of $H$. If $\pi$ is principal series $\pi = \chi_0 \boxplus \chi_0^{-1}$, assume that $\chi \neq \chi_0, \chi_0^{-1}$ or an unramified twist. Let  $a \ \colon \mathrm{Lie}(G)^\ast \to \C$ be a function constant at scales $|T| = q^N$ with $N \geq c(\chi)/2$ and supported on elements $T\tau$ with $y$ and $z$ coordinates $\tau_y$ and $\tau_z$ satisfying $|\tau_y|, |\tau_z| \leq \min\left(q^{c(\chi_0\chi^{-1})/2}, q^{c(\chi_0^{-1}\chi^{-1})/2}\right)$ in the principal series case and $|\tau_y|, |\tau_z| \leq (q^{c(\pi \otimes \overline{\chi})/2})$ in the supercuspidal case. Then, there exists a hyperbola (see \ref{sec:integral}) $\mathrm{Hyp}(\pi, \chi) \subset \mathrm{Lie}(G)^\ast$ so that
\begin{align}
	\mathcal{H}_{\pi, \chi}(a) = \int_{\mathrm{Hyp}(\pi, \chi)} a(\xi)\,d\xi.
\end{align}  
 \end{thm}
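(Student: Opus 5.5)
Since $\ell_\chi$ is defined through the Kirillov model, I will compute $\mathcal{H}_{\pi,\chi}(a)$ entirely inside it. Realize $\pi$ on functions $W$ on $\F^\times$ via $W(h)=W_v\begin{pmatrix} h&\\&1\end{pmatrix}$, with inner product $\int_{\F^\times}|W(h)|^2\,d^\times h$. In this model $\ell_\chi(v)=\langle W,\chi\rangle$. Choose the basis $\mathcal{B}(\pi)$ adapted to the Mellin decomposition: characters $\eta$ of $\O^\times$ times $1_{\varpi^m\O^\times}$. Then the relative character \eqref{eqn:relchar} becomes the diagonal matrix coefficient of $\Op(a)$ at the ``vector'' $\chi$. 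Concretely, $\mathcal{H}_{\pi,\chi}(a)$ is the Mellin transform at $\chi$ of the kernel of $\Op(a)$, restricted to the diagonal $h_1=h_2$ in a suitable sense, integrated against $\chi(h_1)\overline{\chi}(h_2)$.

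The first step writes $\Op(a)=\int a^\vee(x)\pi(\exp x)\,dx$ in coordinates $x=(x_h,x_y,x_z)$ on $\mathrm{Lie}(G)$. The Cartan and upper nilpotent directions act simply in the Kirillov model, by dilation and by multiplication by $\psi(x_y h)$. Because $a$ is constant at scale $|T|=q^N$, $a^\vee$ is supported on $|x|\le q^{-N}$, and for $N\ge c(\chi)/2$ the exponential is a bijection on this small neighborhood. The lower unipotent direction is the hard part. It acts through the Bessel function of $\pi$, or equivalently through the Weyl element $w$. I would use the known formula for $\pi(w)$ in the Kirillov model, which is given by the $\varepsilon$-factors $\varepsilon(\pi\otimes\mu^{-1},s)$ acting on the Mellin components $\mu$.

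Combining these, the contribution of $\chi$ reduces to an integral over $x$ of $a^\vee(x)$ against a product of two kinds of Gauss sums. The first is a Tate-type Gauss sum $\int\psi(x_y h)\chi(h)\,d^\times h$. The second is the twisted epsilon factor evaluated on $\chi$ twisted by the small characters created by $x_z$. The support hypotheses on $\tau_y,\tau_z$ are exactly what is needed to evaluate these factors in stationary-phase form. In the principal series case, $\varepsilon(\pi\otimes\overline\chi)=\varepsilon(\chi_0\overline\chi)\varepsilon(\chi_0^{-1}\overline\chi)$. In the supercuspidal case it is a single factor of conductor $c(\pi\otimes\overline\chi)$. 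The bounds $|\tau_y|,|\tau_z|\le q^{c/2}$ keep the phase in the range where the $p$-adic stationary phase for Gauss sums is exact rather than asymptotic: for $|u|\le q^{-c/2}$, $\chi(1+u)=\psi(\alpha_\chi u)$ for some $\alpha_\chi$ with $|\alpha_\chi|=q^{c(\chi)}$. The same linearization holds for $\varepsilon(\pi\otimes\overline\chi\mu)$ in $\mu$. I would prove this linearization first, as a lemma giving each Gauss sum as a point evaluation at the ``critical point'' $\alpha$, up to an explicit unit.

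After linearization, the $x$-integral becomes an inverse Fourier transform. The linear phases fix the $h$-coordinate of $\xi$ (via $\alpha_\chi$) and tie $\xi_y\xi_z$ to a constant determined by the $\alpha$'s of the epsilon factors. This cuts out the hyperbola
\[
\mathrm{Hyp}(\pi,\chi)=\{\xi:\ \xi_h=\alpha_\chi,\ \xi_y\xi_z=\text{const}(\pi,\chi)\},
\]
and Fourier inversion yields $\int_{\mathrm{Hyp}}a$. The orthonormal sum over the basis collapses to the $\chi$-isotypic piece, because the small $x_z$-twists only move among characters of conductor $\le N$.

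I expect the main obstacle to be the $x_z$ direction. I need an exact Bessel-function or epsilon-factor expansion in which the twisted epsilon factors vary linearly in phase over the relevant range, including when conductor dropping occurs, that is, when $c(\pi\otimes\overline\chi)<c(\pi)+\dots$. The excluded cases $\chi=\chi_0^{\pm1}$ up to unramified twist are exactly where a Gauss sum degenerates and this fails. I would first handle the principal series via the induced model, where everything is an explicit Tate integral, and then transfer the argument to supercuspidals using the local functional equation.
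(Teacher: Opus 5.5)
Your list of ingredients matches the paper's: the Kirillov model, $\pi(w)$ acting on Mellin components through $\gamma(\pi\otimes\nu)$, exact $p$-adic stationary phase via $\alpha_\chi$ and the twisting formula of Proposition \ref{prop:tate}, and the hyperbola $\xi_x=\alpha_\chi$, $\xi_y\xi_z=\alpha_{\pi,\chi}$. But the two steps that actually carry the proof are only asserted, and the mechanism you give for the second is not the one that works.

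First, ``combining these'' presupposes that $\pi(\exp(x_0e_0+x_y e_+ + x_z e_-))$ splits into a dilation, a multiplication by $\psi(x_yh)$ and a $w$-conjugated multiplication. That splitting only holds modulo $K(2N)$. The paper obtains it in three steps:
\begin{itemize}
\item It decomposes $a$ into wave packets $1_\tau^T$ (Proposition \ref{prop:wavepacketdecomp}).
\item It uses that $k\mapsto\langle\log k,T\tau\rangle$ is a character of $K(N)$ trivial on $K(2N)$ (Proposition \ref{prop:compositions}), so it factors along $K(N)=K_-K_0K_+$. This gives $\Op(a_\tau)=\Op^-(a_+)\Op^0(a_0)\Op^+(a_-)$ with commuting factors.
\item Only then can it write $\Op^-=\pi(w)\Op^+\pi(w)$ and use $\ell_\chi\circ\pi(w)=\gamma(\pi\otimes\chi)\,\ell_{\chi^{-1}}$.
\end{itemize}
The hypothesis $N\ge c(\chi)/2$ is what gives $\chi(\exp(T^{-1}u))=\psi(\alpha_\chi T^{-1}u)$, so that $\Op^0(a_0)$ acts on $\ell_\chi$ by the scalar $a_0(\alpha_\chi)$. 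It has nothing to do with bijectivity of $\exp$. Also, the basis sum collapses simply because $\ell_\chi$ kills every Mellin basis vector whose $\OO^\times$-type is not $\chi$, leaving $\lim_R\ell_\chi(\Op(a)v_\chi^R)$. Conductors $\le N$ play no role.

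Second, ``after linearization the $x$-integral is an inverse Fourier transform, and Fourier inversion gives $\int_{\mathrm{Hyp}}a$'' is not an argument, and it misplaces where the hyperbola comes from. The twisting formula only holds for twists of conductor at most half the relevant conductor, i.e.\ for frequencies in the window $|\tau_y|,|\tau_z|\le q^{c/2}$. So there is no kernel identity on all of $k(N)$ to invert; you must work wave packet by wave packet. The paper's computation runs as follows:
\begin{itemize}
\item $\Op^+(a_-)$ multiplies Kirillov vectors by $\1_\OO(\tau_y+T^{-1}h)$. For $\tau_y\in\varpi^{-r}\OO^\times$ this is the indicator of a $U(r)$-coset in the shell $v(h)=-N-r$, which is expanded in multiplicative characters $\eta\in\mathcal{X}_r$ (likewise $\omega\in\mathcal{X}_s$ for $\tau_z$).
\item By Lemma \ref{lemma:weylkirillov}, $\pi(w)$ sends the shell $v=m$ to $v=-m-c(\pi,\chi)$, because under the genericity hypothesis $\gamma$ is a pure monomial in $z=q^{-s}$.
\item Orthogonality then forces $\omega^{-1}=\eta$ and $2N+r+s=c(\pi,\chi)$.
\item Only then does Proposition \ref{prop:tate} turn $\epsilon(\cdot\,\eta^{-1})$ into $\eta^{-1}(\alpha_{\pi,\chi})\epsilon(\cdot)$, yielding $|\mathcal{X}_{\max(r,s)}|^{-1}\1_{U(\min(r,s))}(\alpha_{\pi,\chi}/(T\tau_y\,T\tau_z))$.
\end{itemize}
The product condition is thus a multiplicative-character phenomenon, not one of additive phases. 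When $\tau_y,\tau_z\in\OO$, the value $(2N-c(\pi,\chi))\1_{2N\ge c(\pi,\chi)}$ is just a count of valuation shells, with no stationary phase at all. The theorem follows only after computing $\int_{\mathrm{Hyp}}1_\tau^T$ directly in the same four cases, with respect to the multiplicative measure $d^\times\xi_y$ (which you never specify), and comparing with Proposition \ref{prop:table}.

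Finally, for supercuspidals the linearization of $\epsilon(\pi\otimes\barchi\mu)$ in $\mu$ does not come from ``transferring via the local functional equation.'' The paper uses the dihedral parametrization $\pi\leftrightarrow\Ind_{E/F}\xi$ with $\gamma(\pi\otimes\nu)=\epsilon(1/2,\xi\nu_E,\psi_E)$, and the twisting formula over $E$. That is what produces $\alpha_{\pi,\chi}=\mathrm{Nm}(\alpha_{\xi\chi_E^{-1}})$.
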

 When $F = \R$, the analogue of this theorem was proven by Nelson and Venkatesh in \cite{NV} for pairs of representations $(\pi, \sigma)$ of $(G,H)$ in the stable case. In this case, the hyperbola in Theorem \ref{thm:main theorem} has the following interpretation as a relative coadjoint orbit. Let $(G,H) = (\PGL_2(\R), \GL_1(\R))$ and $\pi$ is principal series and $\sigma$ is a character, and denote by $\mathbf{C}(\pi)$ and $\mathbf{C}(\sigma)$ the conductors of $\pi$ and $\sigma$. The coadjoint orbit for $\pi$ is given by 
\begin{equation}
    \OO_\pi := \left\{\begin{pmatrix} x & y+z \\ y-z & -x \end{pmatrix} : x^2 + y^2-z^2 = \mathbf{C}(\pi)^2\right\} \subseteq \g^\ast \cong\R^3,
\end{equation}
and the coadjoint orbit for $\sigma$ is given by
\begin{equation}
    \OO_\sigma := \{ x = \mathbf{C}(\sigma)\} \subseteq \h^\ast \cong \R.
\end{equation}
When $\mathbf{C}(\sigma) \neq \mathbf{C}(\pi)$, the relative coadjoint orbit is$$\OO_{\pi, \sigma} := \OO_\pi \cap \rm{pr}^{-1}(\OO_\sigma) =  \{y^2 - z^2 = \mathbf{C}(\pi)^2 - \mathbf{C}(\sigma)^2\},$$
which carves out a hyperbola, which is the analogue of our $\mathrm{Hyp}(\pi, \chi)$ (see \ref{fig:onesheet}).
\begin{figure}
    \centering
    \includegraphics[scale=.2]{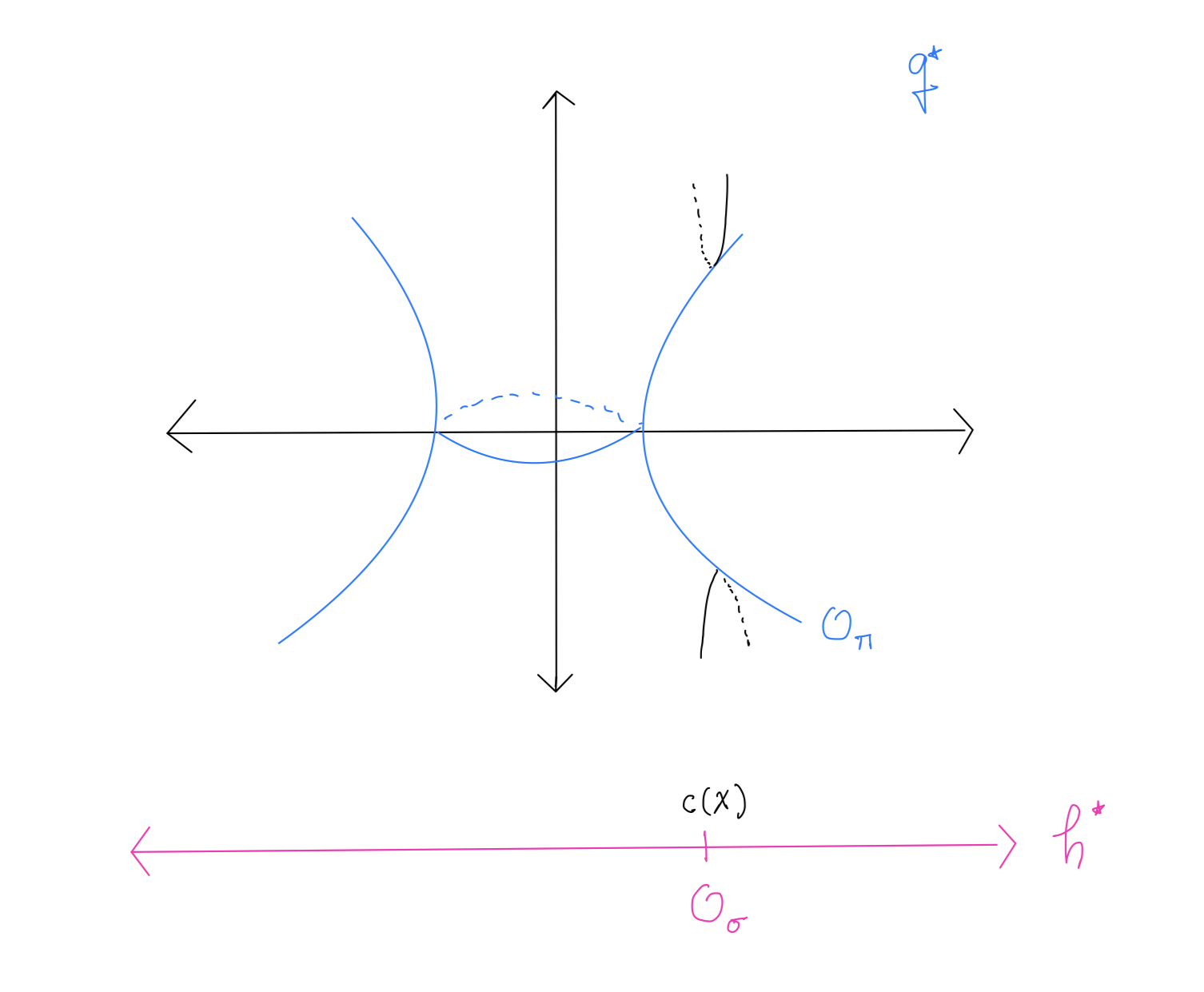}
    \caption{Relative Coadjoint Orbit}
    \label{fig:onesheet}
\end{figure}
However, when $\mathbf{C}(\sigma) = \mathbf{C}(\pi)$ the resulting locus is a cross which has a singularity at the origin. In \cite{NV}, they have to rule out this degeneracy by working with \textit{stable} pairs of representations $(\pi,\sigma)$, that is, those representations for which $H$ acts simply transitively on $\OO_{\pi, \sigma}$.  This notion of stability amounts to a condition on the infinitesimal characters $\lambda_\pi$ and $\lambda_\sigma$ of $\pi$ and $\sigma$: namely that $\{ \text{ eigenvalues of } \lambda_\pi\} \cap \{\text{ eigenvalues of } \lambda_\sigma\} = \emptyset$. Moreover, we say a pair $(\lambda, \mu) \in \g^\ast \times \h^\ast$ is stable if $\{\text{ eigenvalues of } \lambda \} \cap \{\text{ eigenvalues of } \mu\} = \emptyset.$ 

\begin{rem}
    Away from stable pairs is where the conductor $C(\pi \times \overline{\sigma})$ of the Rankin-Selberg $L$-function $L(s, \pi \times \overline{\sigma})$ drops. Recent subconvexity results such as \cite{Unpaper} rely on these relative character asymptotics away from the stable range and thus require the \textit{non-conductor dropping} assumption 
\begin{align*}
    \forall i, j \ |\lambda_{\pi, i} - \lambda_{\sigma,j}| \gg T.
\end{align*}
The advantage of our main theorem is that it provides a relative character asymptotic in the case where the conductor is allowed to drop significantly. Indeed, the assumption on $\pi$ and $\chi$ in our theorem is analogous to the non-conductor dropping requirement 
\begin{align*}
    \forall i, j \ |\lambda_{\pi, i} - \lambda_{\sigma, j}| \gg 1,
\end{align*}
but only in the non-archimedean case for $(G,H) = (\PGL_2, \GL_1)$. In this case $\lambda_{\pi, i}$ is just the conductor exponent of $\chi_0$ or $\chi_0^{-1}$ and $\lambda_{\sigma, j}$ is the conductor exponent of $\chi$. 
\end{rem}

There is a difference in the failure of stability in Theorem \ref{thm:main theorem} vs in Example \ref{ex:nilcones}. Namely in Theorem \ref{thm:main theorem}, the cross shape fails to be an $H$-torsor because at $0$ it does not have trivial stabilizer and the orbit not containing zero is not closed, while in Example \ref{ex:nilcones}, stability fails because there are infinitely many $H$-orbits on $\OO_{\pi, \sigma}$ hence the stabilizers aren't even necessarily finite! 
\begin{question}
    To what extent does Theorem \ref{thm:main theorem} generalize to other non-stable situations?
\end{question}

Even though stability fails in Theorem \ref{thm:main theorem}, there is still a natural orbit. We may informally conjecture, then, that relative character asymptotics should be governed by the non-closed orbit with finite stabilizers, which covers situations like Theorem \ref{thm:main theorem} but not Example \ref{ex:nilcones}. 

The strategy of Nelson and Venkatesh has most notably led to many breakthroughs on the subconvexity problem including \cite{Unpaper}, \cite{standard}, \cite{Marshall}, \cite{HN} to name a few. We don't pursue any applications to subconvexity in this paper and entirely avoid any relative trace formula techniques. For an in-depth introduction to the subject we recommend the introduction of the original paper \cite{NV} of Nelson and Venkatesh, as well as the thorough exposition in the recent preprint \cite{AT}, which also considers non-archimedean microlocal analysis. 

\subsection{Sketch of the Proof}\label{subsec:proofsketch}
By formally applying Parseval to Equation \ref{eqn:relchar}, it suffices to compute 
$$\langle \Op(a) \ell_\chi, \ell_\chi \rangle,$$
for $a$ as in Theorem \ref{thm:main theorem}. Such functions have a decomposition into wave packets (Proposition \ref{prop:wavepacketdecomp}), so by linearity we can compute the relative character for a single wave packet $1_\tau^T$. The function $1_\tau^T$ is a bump function around some fixed element $\tau \in \g^\ast$. In fact, $1_\tau^T$ is the Fourier transform of a wave packet supported on the congruence subgroup $K(N)$ oscillating at fixed frequency $\tau$ (See Section \ref{sec:functions}). To compute the relative character for $a = 1_\tau^T$, we will factor $\Op(a)$ into directions, corresponding to the Iwahori factorization $K(N) = K_{-}K_0K_+$. For any function $b : F \to \C$ and for $\star \in \{-, 0, +\}$, $\Op^\star(b)$ averages the action of $K_\star$, weighted by $b$ (see Equation \ref{eqn:Opstarb} for a precise definition). Therefore, we factor $a = a_- a_0 a_+$ into such functions, each of which is a one-variable function on $\mathrm{Lie}(G_\star)^\ast$. The function $a_+$ (resp. $a_-$) most naturally pairs with the operator $\Op^-$ (resp. $\Op^+$) due to the duality between $\g$ and $\g^\ast$ (see Section \ref{subsec:opcalculus}). We will use the Weyl element $w = \begin{pmatrix} 0 & 1 \\ 1 & 0 \end{pmatrix}$ and the $\GL_2 \times \GL_1$ local functional equation to go between $\Op^-$ and $\Op^+$ (see Section \ref{subsec:weyl}). In particular, we have the relation $\Op^+ = \pi(w)\Op^-\pi(w)$. These decompositions and factorizations allow us to express the relative character as 
\begin{align*}
\langle \Op(a)\ell_\chi, \ell_\chi\rangle &= \langle \Op^-(a_+)\Op^0(a_0)\Op^+(a_-)\ell_\chi, \ell_\chi \rangle \\
&= \langle \pi(w)\Op^0(a_0)\Op^+(a_-)\ell_\chi, \Op^+(a_+)\pi(w)\ell_\chi \rangle \\
&= \gamma(\pi \otimes \overline{\chi})\langle \pi(w)\Op^+(a_-)\Op^0(a_0)\ell_\chi, \Op^+(a_+)\ell_{\chi^{-1}} \rangle \\
&= \gamma(\pi \otimes \overline{\chi}) a_0(\tau_x)\langle \pi(w)\Op^+(a_-)\ell_\chi, \Op^+(a_+)\ell_{\chi^{-1}} \rangle. 
\end{align*}
Denote this last inner product as 
\begin{align*}
I = \langle \pi(w)\Op^+(a_+)\ell_\chi, \Op^+(a_-)\ell_{\chi^{-1}}\rangle .	
\end{align*}
From here, we work in the Kirillov model $\K(\pi, \psi)$ for $\pi$, since this model diagonalizes the action of the unipotent subgroup $G_+$ in $G$. Indeed, after identifying $\ell_\chi$ with a suitable vector in the Kirillov model, we essentially have 
\begin{align*}
    \Op^+(a_+)\ell_\chi &=  \{ h \mapsto a_+(h)\chi(h)\} \in \K(\pi, \psi)
\end{align*}
and
\begin{align*}
    \Op^+(a_-) \ell_{\overline{\chi}}= \{h \mapsto a_-(h)\overline{\chi}(h)\} \in \K(\pi, \psi).
\end{align*}
Then we can expand into a basis of characters and applying Mellin inversion leads us to study
\begin{align*}
    I = \int_{\eta} \langle a_-\barchi, \eta \rangle \langle \eta, \pi(w)a_+\chi \rangle \,d\eta  = \int_{\eta} \gamma(\pi \otimes \overline{\eta}) \langle a_-\barchi, \eta \rangle \langle \bareta, a_+ \barchi \rangle \,d\eta.
\end{align*}
At this point we crucially use that we are working with a specific choice of test function $a$, that is $a = 1_\tau^T$. The technical bulk of this paper is spent calculating $I$, which we carry out in Section \ref{sec:main calculations}. 

On the other hand, evaluating the integral on the right hand side of Theorem \ref{thm:main theorem} is relatively straightforward. We conclude the proof by carrying out the computation of the right-hand side and comparing it with the previous computation of the left-hand side. 

\subsection{Organization of the Paper}
 In Section \ref{sec:preliminaries} we  give some important definitions and set-up for the rest of the paper.  In Section \ref{sec:functions} , we  explicitly describe the functions we work with, and examine some of their properties, along with proving the desired properties of the operators $\Op(a)$. In Section \ref{sec:functionals}, we prove some of the desired properties of the linear functionals and show how they are related to the relative character. In Section \ref{sec:main calculations}, we  carry out the calculations necessary to complete the proof sketch from the previous section. Finally, in Section \ref{sec:integral}, we compute the corresponding integral in phase space and show that it agrees with the relative character. 

\section*{Acknowledgements}
We thank Paul Nelson and Akshay Venkatesh for helpful conversations and comments on earlier drafts. This work constitutes the author's PhD thesis and was revised while supported by a research grant (VIL54509) from VILLUM FONDEN. 

\section{Preliminaries}\label{sec:preliminaries}
Let $v$ be a finite place of a global field $\F$ and let $\F_v$ denote the corresponding non-archimedean local field. We only work locally so, by abuse of notation, we'll refer to $\F_v$ as just $F$. Let $\OO$ be its ring of integers with maximal ideal $\mathfrak{p}$. We denote by $\varpi$ its uniformizer. The absolute value on $F$ will be denoted $|\cdot|$ (with the subscript sometimes omitted) so that $|\varpi| = q^{-1}$, where $q$ is the size of the residue field $\OO/\mathfrak{p}$. Let $\mathrm{val}(x) = v(x) : F \to \Z$ be such that $|x| = q^{-v(x)}$. For the purposes of this thesis, we will assume that $q$ is odd. \\
We will often use $\psi$ to denote an additive character of $F$, i.e. $\psi(x + y) = \psi(x)\psi(y)$. We let $\mathrm{cond}(\psi)$ denote the largest ideal of $\OO_F$ on which $\psi$ is trivial and denote its conductor exponent $\rm{c}(\psi)$ so that $\rm{cond}(\psi) = \mathfrak{p}^{\rm{c}(\psi)}$. If $\rm{cond}(\psi) = \OO_F$, we say $\psi$ is unramified, and we have $\rm{c}(\psi) = 0$. Letters like $\chi, \nu,$ or $\eta$ will denote multiplicative characters of $F^\times$. We denote the multiplicative subgroup $\OO^\times \cap (1 + \p^n)$ by $U(n)$. We let $\rm{cond}(\chi)$ denote the largest subgroup of $\OO_F^\times$ on which $\chi$ is trivial and denote its conductor exponent $\rm{c}(\chi)$ so that $\rm{cond}(\chi) = U(c(\chi))$. If $\rm{cond}(\chi) = \OO_F^\times$, then $\rm{c}(\chi) = 0$ and we say that $\chi$ is unramified. Every unramified character $\eta$ can be written as $\eta(x) = |x|^s$ for some $s \in \C$. Let $X(F^\times)$ denote the group of multiplicative characters of $F$. The unramified characters of $F^\times$ form a subgroup of $X(F^\times)$ isomorphic to $\C/(\Z \cdot 2\pi i /\log q)$. \\
There is a unique Haar measure $\,du$ on the additive group $F$ normalized so that $\mathrm{vol}(\OO, \,du) = 1$. The multiplicative Haar measure $\,d^\times u$ on $F^\times$ is normalized so that $\mathrm{vol}(\OO^\times, \,d^\times u) = 1$. The measures satisfy $\,d^\times u = \zeta_F(1)\frac{\,du}{|\cdot|}$, where $\zeta_F(1) = (1-q^{-1})^{-1}$ is the local zeta factor. With these measures, we have $\mathrm{vol}(\p^n, \,du) = q^{-n}$ and $\mathrm{vol}(U(n), \,d^\times u) = \zeta_F(1)q^{-n}$. Let $\mathcal{X}_n$ denote the set of characters with conductor at most $n$. Then $|\mathcal{X}_n| = \zeta_F(1)^{-1}q^n$.  \\
We define the congruence subgroups 
\begin{align*}
    K^0[m_v] := \left\{ \begin{pmatrix}
        a & b \\ c & d 
    \end{pmatrix} \in G(\OO_v) \mid c \equiv 0 \bmod \varpi_v^{m_v}\right\}.
\end{align*}
If $\pi_v^{K^0[m-1]}$ is empty but $\pi_v^{K^0[m]}$ is nonzero, then we call $m$ the conductor exponent of $\pi$  and write $C(\pi) = \mathfrak{p}^{v(\pi)}$. The main subgroup of interest for us is the principal congruence subgroup, defined as 
\begin{align*}
    K(n) := \left\{ \begin{pmatrix} a & b \\ c & d \end{pmatrix} \in G(\OO_v) \mid a-1, b, c, d-1 \equiv 0 \bmod \varpi_v^n \right\}.
\end{align*}
We have a decomposition of $K_v(n)$ into its Iwahori factorization given by 
$$K(n) = N(\varpi^n)A(1+\varpi^n)N_{-}(\varpi^n),$$
where for $\mathfrak{a} \subset \OO$, $N(\mathfrak{a})$ has upper right entries in $\mathfrak{a}$, $A(\mathfrak{a})$ has diagonal entries in $1 + \mathfrak{a}$ and $N_{-}(\mathfrak{a})$ has bottom left entries in $\mathfrak{a}$. \\

We denote 
$$N := \left\{ n(x) = \begin{pmatrix} 1 & x \\ 0 & 1 \end{pmatrix} : x \in F\right\} \ \text{ and } A := \left\{ a(y) = \begin{pmatrix} y & 0 \\ 0 & 1 \end{pmatrix} y \in F^\times\right\}.$$

\subsection{Whittaker and Kirillov Models}
For a generic irreducible representation $\pi$ and additive character $\psi$ of $F$, we embed $\pi$ into its smooth Whittaker model $\mathcal{W}(\pi, \psi)$ given by the space of functions (see \cite{JS83} or \cite[Sec 37]{BH})
$$\mathcal{W}(\pi, \psi) = \{ W \in C^\infty(G) : n(x)W(g) = \psi(x)W(g) \text{ for all } (x,g) \in F \times G\}.$$
The restriction map $\mathcal{W}(\pi, \psi) \to \{\text{ functions } A_F \to \C\}$ is injective and its image \textit{contains} $C_c^\infty(A_F)$ but does not consist solely of such functions. Therefore each $W \in \mathcal{W}(\pi, \psi)$ is determined by a function $W : F^\times \to \C$ so that
$$W(y) := W(a(y)).$$
Moreover, every compactly supported smooth function on $A_F$ arises this way. This realization of $\pi$ as a space of functions on $F^\times$ is the Kirillov model of $\pi$. For unitary $\pi$, an invariant inner product is given by
$$\langle W_1, W_2 \rangle = \int_{F^\times} W_1(y)\overline{W_2(y)}\,d^\times y.$$ 

\subsection{$\GL_1$ Iwasawa-Tate Theory}
If $\chi$ is a multiplicative character, then its twisted dual is $\chi^\vee := \chi^{-1}|\cdot|$. Let $\,d^\times x$ be the Haar measure on $F^\times$, which we take to be $\frac{\,dx}{|x|}$ where $\,dx$ is the Haar measure on $F$. Define local zeta functionals $Z(f, \chi)$ for $f$ a Schwartz function on $F$ by 
\begin{align*}
    Z(f, \chi) = \int_{F^\times} f(x)\chi(x)\,d^\times x. 
\end{align*}
We next recall the local $L$-factors; $L(\chi) = (1 - \chi(\varpi))^{-1}$ if $\chi$ is unramified and $L(\chi) = 1$ if $\chi$ is ramified. Then the $\GL_1$ local functional equation states that 
\begin{align*}
    \frac{Z(f,\chi)}{L(\chi)}\epsilon(\chi, \psi) = \frac{Z(\hat{f}, \chi^\vee)}{L(\chi^\vee)},
\end{align*}
where $\hat{f}$ denotes the Fourier transform $\hat{f}(y) = \int_F f(x)\psi(xy)\,dx.$ It is important to understand the epsilon factor which will turn out to play a privileged role in our analysis. To do this we first define the Gauss sum (see \cite{GH})
\begin{align*}
    g(\nu, \psi) = \int_{\OO^\times} \nu(x)\psi(x)\,d^\times x.
\end{align*}
It has the property that if the conductors of $\nu$ and $\psi$ are different, then the Gauss sum vanishes. If the conductors are say, both $\p^n$, then $|g(\nu, \psi)|^2 = q^{-n}$. When $\chi$ is replaced by $\chi|\cdot|^{s-1/2}$, we write $\epsilon(s,\chi, \psi)$ for the epsilon factor. 
\begin{lemma}
    Suppose $\chi$ is a multiplicative character of the form $\eta |\cdot|^s$ with conductor $\mathrm{c}(\chi) = n$. Let $\psi$ be an unramified additive character. Then 
    \begin{equation}
        \epsilon(s,\chi, \psi) = q^{n(1-s)}\eta(\varpi^n)g(\eta^{-1}, \psi_{\varpi^{-n}}),
    \end{equation}
    where $\psi_t(x) := \psi(tx)$. 
\end{lemma}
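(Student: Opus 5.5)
The plan is to evaluate both sides of the $\GL_1$ local functional equation stated above on one well-chosen Schwartz function, chosen so that each zeta integral reduces to a single explicit quantity, and then solve for $\epsilon$. Throughout I take $\chi = \eta|\cdot|^s$ directly as the character in the functional equation. I use the measure $d^\times x = dx/|x|$ of this subsection, and the Gauss sum is taken with respect to the restriction of that measure to $\OO^\times$.

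First I treat the unramified case $n=0$ separately. Take $f = \1_{\OO}$, so that $\hat f = \1_{\OO}$ because $\psi$ is unramified and $\mathrm{vol}(\OO)=1$. Both zeta integrals are then geometric series, equal to $L(\chi)$ and $L(\chi^\vee)$ respectively. Hence $\epsilon = 1$, which matches the right-hand side since $g(\eta^{-1},\psi) = \mathrm{vol}(\OO^\times)$ in this normalization.

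For $n \ge 1$, both $\chi$ and $\chi^\vee$ are ramified, so $L(\chi) = L(\chi^\vee) = 1$. I take $f = \1_{1+\p^n}$.
\begin{enumerate}
\item Since $\chi$ is trivial on $U(n)$ and $|x|=1$ on $1+\p^n$, we get $Z(f,\chi) = \mathrm{vol}(1+\p^n, dx) = q^{-n}$.
\item Substituting $x = 1+t$ with $t \in \p^n$ gives $\hat f(y) = \psi(y)\int_{\p^n}\psi(ty)\,dt = q^{-n}\psi(y)\1_{\p^{-n}}(y)$.
\item Therefore
\begin{align*}
Z(\hat f,\chi^\vee) = q^{-n}\int_{\p^{-n}\setminus\{0\}} \psi(y)\chi^{-1}(y)\,dy .
\end{align*}
\end{enumerate}

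The main step is to evaluate this last integral. I decompose it into shells $y = \varpi^{-m}u$ with $u \in \OO^\times$ and $m \le n$. On the $m$-th shell the integrand is $\chi^{-1}(\varpi^{-m})\,\eta^{-1}(u)\,\psi(\varpi^{-m}u)$, with $dy = q^{m}\,du$. The inner $u$-integral is the Gauss sum $g(\eta^{-1},\psi_{\varpi^{-m}})$. The additive character $\psi_{\varpi^{-m}}$ has conductor exponent $-m$, so by the vanishing property of Gauss sums recalled above, every shell with $m \neq n$ contributes zero. Here I expect the main care to be needed: one must check that the conductor conventions for $\psi_{\varpi^{-m}}$ and $\eta$ genuinely match only at $m=n$. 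This includes the shells with $m \le 0$, where $\psi(\varpi^{-m}u)$ is constant in $u$ and $\eta^{-1}$ integrates to zero over $\OO^\times$.

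Only the shell $m=n$ survives. On it, $\chi^{-1}(\varpi^{-n}) = \eta(\varpi^{n})\,|\varpi^{-n}|^{-s} = \eta(\varpi^n)\,q^{-ns}$. This gives
\begin{align*}
Z(\hat f,\chi^\vee) = q^{-n}\cdot q^{n}\cdot q^{-ns}\,\eta(\varpi^n)\,g(\eta^{-1},\psi_{\varpi^{-n}}).
\end{align*}
Inserting this and $Z(f,\chi) = q^{-n}$ into the functional equation yields
\begin{align*}
\epsilon(s,\chi,\psi) = q^{n(1-s)}\,\eta(\varpi^n)\,g(\eta^{-1},\psi_{\varpi^{-n}}).
\end{align*}
The last thing to do is reconcile the normalizing constant ($\zeta_F(1)$ versus $1$) between $du$ on $\OO^\times$ and the $d^\times u$ used in defining $g$. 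I will fix the Gauss sum measure to be the restriction of $dx/|x|$, as stated at the start, so that no stray factor appears.
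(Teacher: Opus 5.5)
Your computation for ramified $\chi$ ($n\ge 1$) is correct and complete. With $f=\1_{1+\p^n}$ you get $Z(f,\chi)=q^{-n}$ and $\hat f = q^{-n}\psi\,\1_{\p^{-n}}$. Only the shell $v(y)=-n$ survives, giving $\epsilon=q^{n(1-s)}\eta(\varpi^n)\int_{\OO^\times}\eta^{-1}(u)\psi(\varpi^{-n}u)\,du$.

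This is a self-contained derivation from the $\GL_1$ functional equation. The paper gives no argument and simply cites Goldfeld--Hundley (Thm.~2.3.8). Your reading of $\epsilon(s,\chi,\psi)$ as the factor attached to the character $\eta|\cdot|^s$ in the paper's functional equation is also the reading under which the subsequent corollary $|\epsilon(1/2,\chi,\psi)|=1$ holds.

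Two minor corrections:
\begin{itemize}
\item $\psi_{\varpi^{-m}}$ has conductor $\p^{m}$, i.e.\ exponent $m$ rather than $-m$ in the paper's convention.
\item Rather than leaning on the loosely stated vanishing property of Gauss sums, argue directly. For every $m<n$, the function $u\mapsto\psi(\varpi^{-m}u)$ is $U(m)$-invariant, while $\eta$ is nontrivial on $U(m)$, so the shell integral vanishes.
\end{itemize}

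The unramified case, however, is wrong as written. You fixed the Gauss sum to be computed with $dx/|x|$ on $\OO^\times$, which is exactly what makes the $n\ge 1$ case come out without a stray factor. In that normalization $g(\eta^{-1},\psi)=\mathrm{vol}(\OO^\times,dx)=1-q^{-1}$. So at $n=0$ the right-hand side of the lemma is $1-q^{-1}$, while $\epsilon=1$.

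The same factor is hidden in your claim that the two zeta integrals equal $L(\chi)$ and $L(\chi^\vee)$. Each is actually $(1-q^{-1})$ times that; the factor cancels in the ratio, but not against the Gauss sum.

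No single normalization rescues both cases. Taking $\mathrm{vol}(\OO^\times)=1$ fixes $n=0$ but multiplies the $n\ge1$ right-hand side by $\zeta_F(1)$. The paper's own claim $|g(\nu,\psi)|^2=q^{-n}$ has the same issue at $n=0$.

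So you should state and prove the formula for $n\ge 1$ only, and record $\epsilon=1$ separately for unramified $\chi$ and $\psi$. The characters whose epsilon factors the paper actually manipulates later are ramified in any case.
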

\begin{proof}
See \cite{GH} Thm 2.3.8.
\end{proof}
We will use the following corollary often, whose proof is immediate. 
\begin{cor}
    For $\chi$ unitary, the epsilon factor satisfies 
    \begin{align*}
        |\epsilon(1/2, \chi, \psi)|^2 = 1.
    \end{align*}
\end{cor}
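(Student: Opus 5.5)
This follows from the preceding lemma by specialising to $s = 1/2$ and taking absolute values. Write $\chi = \eta|\cdot|^{s_0}$ with $n = c(\chi)$. Since $\chi$ is unitary, we may take $s_0$ purely imaginary; then $\eta$ is unitary, and $\chi|\cdot|^{s-1/2}$ at $s=1/2$ is just $\chi$.

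The lemma is stated for characters of the form $\eta|\cdot|^s$. We absorb the unramified part $|\cdot|^{s_0}$ into $\eta$ and evaluate at $s = 1/2$. This gives
\begin{equation*}
\epsilon(1/2,\chi,\psi) = q^{n/2}\,\eta(\varpi^n)\,g(\eta^{-1},\psi_{\varpi^{-n}}).
\end{equation*}
Because $\eta$ is unitary, $|\eta(\varpi^n)| = 1$. Squaring, we get
\begin{equation*}
|\epsilon(1/2,\chi,\psi)|^2 = q^{n}\,|g(\eta^{-1},\psi_{\varpi^{-n}})|^2 .
\end{equation*}

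Next we use the Gauss sum property recalled just before the lemma. Since $\psi$ is unramified, $\psi_{\varpi^{-n}}$ has conductor exponent matching $\mathfrak{p}^n$. Also $\eta^{-1}$ has conductor $\mathfrak{p}^n$. The conductors therefore agree, and $|g(\eta^{-1},\psi_{\varpi^{-n}})|^2 = q^{-n}$. The two powers of $q$ cancel, so $|\epsilon(1/2,\chi,\psi)|^2 = 1$.

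The only delicate point is the unramified case $n=0$. There the Gauss sum is $\int_{\mathcal{O}^\times}\psi\,d^\times x = 1$ with the normalisation $\mathrm{vol}(\mathcal{O}^\times)=1$, so $|\epsilon|^2=1$ again.

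A cleaner alternative, which I would mention as a check, avoids the conductor bookkeeping. Apply the $\GL_1$ functional equation twice: this gives $\epsilon(s,\chi,\psi)\,\epsilon(1-s,\chi^{-1},\psi) = \chi(-1)$. For unitary $\chi$ we also have $\overline{\epsilon(1/2,\chi,\psi)} = \chi(-1)\,\epsilon(1/2,\chi^{-1},\psi)$. Combining the two yields $|\epsilon(1/2,\chi,\psi)|^2=1$.
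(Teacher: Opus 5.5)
Your argument is correct and is essentially the ``immediate'' proof the paper has in mind: specialise the preceding lemma at $s=1/2$ with $\eta$ unitary (as you do, rather than reading it as $\chi=\eta|\cdot|^{1/2}$), use $|\eta(\varpi^n)|=1$, and cancel the factor $q^{n}$ against the recalled Gauss sum identity $|g|^2=q^{-n}$. The functional-equation check you add at the end is also valid, but it is not needed.
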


We will also use the fact that $\epsilon(s, \chi |\cdot|^a) = q^{-ac(\chi)}\epsilon(s, \chi)$. Lastly, we need two propositions to understand the interaction between multiplicative characters and additive characters. 

\begin{prop}\cite[Sec 1.8]{BH}\label{prop:chi}
    Let $\chi$ be a multiplicative character with conductor exponent $c(\chi) = n$. Then for a fixed unramified additive character $\psi$, there is $\alpha_\chi \in F$ such that 
    \begin{align*}
        \chi(1+x) = \psi(\alpha_\chi x),
    \end{align*}
    for $x \in \p^{n/2 +1}$. 
\end{prop}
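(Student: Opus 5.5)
The plan is to realize the character $x\mapsto \chi(1+x)$ on $\p^{\lceil n/2\rceil}$ (or on the slightly smaller group in the statement) as an additive character, using the $p$-adic logarithm. Because $q$ is odd, the truncated expansion is well behaved at half the conductor.

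First, fix $m=\lfloor n/2\rfloor+1$, so that $2m\ge n+1$, i.e.\ $2m>n$. For $x,y\in\p^{m}$ we have
\[
(1+x)(1+y)=1+x+y+xy, \qquad xy\in\p^{2m}\subseteq \p^{n}.
\]
Hence $1+x+y+xy\equiv 1+x+y$ modulo $U(n)$, since $(1+x+y+xy)(1+x+y)^{-1}\in 1+\p^{2m}$. As $\chi$ is trivial on $U(n)$,
\[
\chi(1+x+y)=\chi(1+x)\chi(1+y).
\]
So $\phi(x):=\chi(1+x)$ is an additive character of the compact group $\p^{m}$, trivial on $\p^{n}$. (For $n$ odd the statement's $\p^{n/2+1}$ is read as $\p^{\lceil n/2+1\rceil}\subseteq\p^m$; in any case the range in the statement lies inside $\p^m$.)

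Second, extend $\phi$ to an additive character of $F$. Characters of the open subgroup $\p^m$ extend to $F$ by Pontryagin duality, since $F$ is locally compact abelian and the restriction map $\widehat F\to\widehat{\p^m}$ is surjective. Then use self-duality of $F$: every additive character of $F$ has the form $x\mapsto\psi(\alpha x)$ for a unique $\alpha\in F$, because $\psi$ is nontrivial. Concretely, the characters of $\p^m/\p^n$ are exactly $x\mapsto\psi(\alpha x)$ with $\alpha\in\p^{-n}/\p^{-m}$; this uses that $\psi$ is unramified, so the pairing $(\alpha,x)\mapsto\psi(\alpha x)$ identifies $\p^{-n}/\p^{-m}$ with the dual of $\p^m/\p^n$. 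This yields $\alpha_\chi$ with $\chi(1+x)=\psi(\alpha_\chi x)$ on $\p^m$, hence on $\p^{n/2+1}$.

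Third, add a refinement if needed. Since $\chi$ is nontrivial on $U(n-1)$ when $n\ge1$, we get $v(\alpha_\chi)=-n$; this will matter later but is not required for the statement.

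The only delicate point is the first step, the multiplicativity modulo $U(n)$. It relies precisely on $2m>n$, which is why the threshold sits at about $n/2$. If one instead wants the explicit formula via the logarithm, $\alpha_\chi$ is determined by $\chi(\exp t)=\psi(\alpha_\chi t)$, and oddness of $q$ guarantees convergence. The duality argument avoids this, so I would present that one.
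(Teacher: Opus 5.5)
Your proof is correct and is essentially the standard argument of \cite[Sec.~1.8]{BH}, which the paper cites instead of giving its own proof. That argument has two steps: $x\mapsto 1+x$ induces an isomorphism $\p^m/\p^n\cong U(m)/U(n)$ because $xy\in\p^{2m}\subseteq\p^n$, and the unramified $\psi$ identifies $\p^{-n}/\p^{-m}$ with the dual of $\p^m/\p^n$.

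Two small corrections. First, the multiplicativity step only needs $2m\ge n$, not $2m>n$. This gives the identity on the larger group $\p^{\lceil n/2\rceil}$, which is the range the paper actually uses later (in the computation of $\Op^0(a_0)$ and in Proposition~\ref{prop:tate}). Second, your side claim $v(\alpha_\chi)=-n$ is only forced when $\p^{n-1}\subseteq\p^m$. With your choice of $m$ this fails for $n\in\{1,2\}$: there $m=n$, so $\alpha_\chi=0$ is allowed.
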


We remark that such an $\alpha_\chi$ is not unique and is only defined up to $\p^{-n/2-1}$. 

\begin{rem}
    We briefly remark on why something like Proposition \ref{prop:chi} might be useful. Consider in the real case, the integral 
    \begin{equation}
        G(\lambda, t) := \int_0^\infty |x|^{i\lambda}e^{-itx}\,dx. 
    \end{equation}
    In order to do stationary phase, we write the multiplicative character $|x|^{i \lambda}$ as the additive character $e^{i\lambda \log(x)}$, giving us a phase function $\Phi(x) = i(\lambda \log(x) -tx)$ and a stationary point $x = \lambda/t$. Analogously, by writing a multiplicative character $\chi$ as an additive one, we can conduct non-archimedean stationary phase which becomes exact. 
\end{rem}

\begin{prop}\cite[Sec 3]{Ta}\label{prop:tate}
     Let $\chi$ and $\omega$ be two characters with $m = c(\omega) \leq c(\chi)/2 = n/2$. Assume $\psi$ is unramified and $n$ is even. Let $\alpha_\chi$ be such that $\chi(\exp a) = \psi(\alpha_\chi a)$ for $a \in \p^{n/2}/\p^n$. Then 
     $$\epsilon(1/2, \chi\omega, \psi) = \omega(\alpha_\chi)\epsilon(1/2, \chi, \psi).$$
\end{prop}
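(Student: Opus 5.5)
The plan is to reduce both epsilon factors to Gauss sums via the Lemma above and then evaluate the Gauss sums by non-archimedean stationary phase, using Proposition \ref{prop:chi} to write $\chi$ additively near $1$. First I dispose of trivialities. If $n=0$ then $m=0$, so $\omega$ is unramified and the claim is the standard unramified twisting formula; so assume $n\geq 2$ is even. Since $c(\omega)=m\leq n/2<n$, we have $c(\chi\omega)=n$. Moreover $\omega$ is trivial on $U(n/2)\subseteq U(m)$, so $\chi\omega(1+y)=\chi(1+y)$ for $y\in\p^{n/2}$. Hence $\alpha_{\chi\omega}=\alpha_\chi$, and I fix a single $\alpha:=\alpha_\chi$.

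Next I apply the Lemma with $s=1/2$ to $\chi$ and to $\chi\omega$. This gives $\epsilon(1/2,\chi,\psi)=q^{n/2}\chi(\varpi^n)g(\chi^{-1},\psi_{\varpi^{-n}})$. I combine $\chi(\varpi^n)$ with the Gauss sum and substitute $x=\varpi^{-n}u$. This rewrites the epsilon factor, up to the same normalizing constant $c_n$ for $\chi$ and $\chi\omega$, as
\begin{equation*}
\epsilon(1/2,\chi,\psi)=c_n\int_{\varpi^{-n}\OO^\times}\chi^{-1}(x)\psi(x)\,d^\times x,
\end{equation*}
and similarly for $\chi\omega$. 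The point is that only the integrand changes between the two cases, by the factor $\omega^{-1}(x)$.

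The key step is the localization of this integral. I write $x=x_0(1+y)$ with $x_0$ running over representatives of $\varpi^{-n}\OO^\times/U(n/2)$ and $y\in\p^{n/2}$. Since $(1+y)(1+y')\equiv 1+y+y' \pmod{\p^n}$, the map $y\mapsto\chi(1+y)$ is additive on $\p^{n/2}/\p^n$, and $\chi(1+y)=\psi(\alpha y)$ there. The inner integral is therefore
\begin{equation*}
\chi^{-1}(x_0)\psi(x_0)\int_{\p^{n/2}}\psi\bigl((x_0-\alpha)y\bigr)\,dy .
\end{equation*}
Because $\psi$ is unramified, this vanishes unless $x_0-\alpha\in\p^{-n/2}$, i.e.\ unless $x_0\in\alpha U(n/2)$. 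Note that this forces $v(\alpha)=-n$, consistent with $c(\chi)=n$. So the whole integral is supported on the single coset $\alpha U(n/2)$.

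On that coset $\omega$ is constant, equal to $\omega(\alpha)$, since $c(\omega)\leq n/2$. So the Gauss-sum integral for $\chi\omega$ is exactly $\omega^{\mp1}(\alpha)$ times the one for $\chi$, and this gives the stated identity. The main obstacle is bookkeeping, not analysis. One must get the sign and inversion conventions right: whether the stationary point is $\alpha$ or $-\alpha$, and $\omega$ versus $\omega^{-1}$. This depends on the normalization of $\epsilon$, of $g$, and of $\alpha_\chi$ through $\chi(1+a)=\psi(\alpha_\chi a)$. I would track these carefully, using Tate's normalization, so that the factor comes out as $\omega(\alpha_\chi)$. A secondary point to check is that the representation $\chi(1+y)=\psi(\alpha y)$ holds on all of $\p^{n/2}$ as stated in the hypothesis, rather than only on $\p^{n/2+1}$ as in Proposition \ref{prop:chi}. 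For $n$ even, this uses that $\log$ and $\exp$ (the map $a\mapsto\exp a$ in the hypothesis) agree with $y\mapsto 1+y$ modulo $\p^n$ on $\p^{n/2}$, which needs $q$ odd.
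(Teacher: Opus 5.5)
Your route is the standard stationary-phase argument behind the result the paper cites from Tate. You rewrite both sides via the Gauss-sum Lemma as $c_n\int_{\varpi^{-n}\OO^\times}(\chi\omega)^{-1}(x)\psi(x)\,d^\times x$ and $c_n\int_{\varpi^{-n}\OO^\times}\chi^{-1}(x)\psi(x)\,d^\times x$. You then use that $y\mapsto\chi(1+y)$ is a character of $\p^{n/2}/\p^n$ to localize to a single coset of $U(n/2)$, and observe that $\omega$ is constant there. All of that is correct.

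The gap is the last step. There you leave the factor as $\omega^{\mp1}(\alpha)$ and assert that careful bookkeeping ``in Tate's normalization'' yields $\omega(\alpha_\chi)$. Nothing is left to choose at that point:
\begin{itemize}
\item The Lemma fixes the integrand as $\chi^{-1}(x)\psi(x)$. This is Tate's normalization, and it matches the paper's $\GL_1$ functional equation.
\item The hypothesis fixes $\alpha_\chi$ by $\chi(1+a)=\psi(\alpha_\chi a)$.
\item Your own inner integral $\int_{\p^{n/2}}\psi((x_0-\alpha)y)\,dy$ puts the stationary coset at $\alpha_\chi U(n/2)$, not at $-\alpha_\chi U(n/2)$.
\item The twisted integrand carries $\omega^{-1}(x)$.
\end{itemize}
So your argument proves
\[
\epsilon(1/2,\chi\omega,\psi)=\omega(\alpha_\chi)^{-1}\,\epsilon(1/2,\chi,\psi),
\]
not the displayed identity.

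This is not a slip you can repair in the write-up: the proposition as printed is off by $\omega\mapsto\omega^{-1}$. Take $\chi$ of conductor $n\geq 2$ and $\omega=|\cdot|^{it}$, so $c(\omega)=0\le n/2$. The Lemma, or equivalently the paper's remark $\epsilon(s,\chi|\cdot|^a)=q^{-ac(\chi)}\epsilon(s,\chi)$, gives $\epsilon(1/2,\chi\omega,\psi)=q^{-int}\epsilon(1/2,\chi,\psi)$. Since $v(\alpha_\chi)=-n$, this equals $\omega(\alpha_\chi)^{-1}\epsilon(1/2,\chi,\psi)$, whereas $\omega(\alpha_\chi)=q^{int}$. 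Because $|\epsilon(1/2,\chi,\psi)|=1$, the printed identity fails whenever $q^{2int}\neq 1$. This check uses only the paper's own behaviour of $\epsilon$ under unramified twists and never involves $\psi$. So flipping the sign of $\psi$ or of $\alpha_\chi$ cannot rescue it.

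What you should do is conclude with $\omega(\alpha_\chi)^{-1}$. This is the classical form $\epsilon(\chi\omega,\psi)=\omega(c)^{-1}\epsilon(\chi,\psi)$ for $\chi(1+x)=\psi(cx)$. Then state explicitly that the proposition should read $\omega(\alpha_\chi^{-1})$ in place of $\omega(\alpha_\chi)$.

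A minor point: for $n=0$ the hypothesis on $\alpha_\chi$ is vacuous. You need to impose the normalization $v(\alpha_\chi)=-n$, i.e.\ $\alpha_\chi\in\OO^\times$, before calling that case ``the standard unramified twisting formula.''
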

\begin{rem}
    We can again interpret Proposition \ref{prop:tate} as a stationary phase result analogous to what happens in the real case. Indeed, suppose we twist $G(\lambda, t) = \int_0^\infty |x|^{i\lambda}e^{-itx}$ by a small character $|x|^{i\nu}$ with $\nu \ll \lambda^{1/2}$. By the same reasoning, this twisted integral 
    \begin{align*}
        G(\lambda, \nu, t) &= \int_0^\infty |x|^{i\lambda}|x|^{i\nu} e^{-itx}\,dx 
    \end{align*}
    has a stationary point at $x = \frac{\lambda + \nu}{t}$. The contribution from this stationary point is 
    \begin{equation*}
        |\lambda + \nu|^{-1/2} \left|\frac{\lambda + \nu}{t}\right|^{i(\lambda + \nu)}e^{-i(\lambda + \nu)}. 
    \end{equation*}
    We can write this as 
    \begin{align*}
        |\lambda|^{-1/2}\left|(1 + \frac{\nu}{\lambda})\right|^{-1/2} |\lambda|^{i(\lambda + \nu)}\left|(1 +\frac{\nu}{\lambda})\right|^{i(\lambda + \nu)}e^{-i\lambda}e^{-i\nu}
    \end{align*}
    and after applying some Taylor expansions and using that $\nu \ll \lambda^{1/2}$, this is approximately the same as 
    \begin{align*}
        |\lambda|^{-1/2} \left|\frac{\lambda}{t}\right|^{i\nu} \left|\frac{\lambda}{t}\right|^{i\lambda}e^{-i\lambda} \approx \left|\frac{\lambda}{t}\right|^{i\nu}G(\lambda, t).
    \end{align*}
    So indeed, we have $G(\lambda, \nu, t) \approx \left|\frac{\lambda}{t}\right|^{i\nu}G(\lambda, t)$ provided $\nu \ll \lambda^{1/2}$. 
\end{rem}

\subsection{Local Representations}
In order to carry out the calculations of the relative character we need to associate some data to our representations. Namely, we need to understand the conductor $\mathrm{c}(\pi \otimes \chi)$, and the elements $\alpha_{\pi, \chi} \in F$ which will appear in the definition of $\mathrm{Hyp}(\pi, \chi)$. We have already seen the definition of $\alpha_\chi$ which appeared at the end of the previous subsection. 

 Let $\pi$ be a generic supercuspidal representation of $\GL_2(F)$ with trivial central character. It corresponds to a two-dimensional representation $\rho = \Ind_{E/F} \xi := \Ind_{\mathcal{W}_E}^{\mathcal{W}_F} \xi$ under the Local Langlands correspondence, where $\mathbf{E}$ is a quadratic extension of $F$ and $\xi$ a character of $\mathbf{E}^\times$. Note that since we work with $\PGL_2$, we require $\det \rho = 1$. We can view $\xi$ as a character of the Weil group $\mathcal{W}_E$ by class field theory. Explicitly, if $\mathbf{\alpha}_\mathbf{E}$ is the Artin reciprocity map
 \begin{align*}
 \mathbf{\alpha}_\mathbf{E} : \mathcal{W}_\mathbf{E}^{\rm{ab}} \to \mathbf{E}^\times,
 \end{align*} 
 then $\xi \circ \mathbf{\alpha}_\mathbf{E}$ is a character of $\mathcal{W}_\mathbf{E}$. For any character $\eta$ of $F^\times$ or $\psi$ of $F$, let $\eta_\mathbf{E} := \eta \circ \mathrm{Nm}_{\mathbf{E}/F}$ and $\psi_\mathbf{E} := \psi \circ \mathrm{Tr}_{\mathbf{E}/F}$. Lastly let $e := e(\mathbf{E}|F)$ be the ramification index and $f := f(\mathbf{E}|F)$ be the inertial degree. The LLC works well with gamma and epsilon factors in the following sense (see \cite[Sec 34]{BH}): for any character $\nu$ on $F^\times$, 
 \begin{align*}
 \gamma(\pi \otimes \nu) = \gamma(\rho \otimes \nu) = 	\gamma(\xi\nu_E)
 \end{align*}
 and 
 \begin{align*}
 \epsilon(1/2, \pi \otimes \nu, \psi) = \epsilon(1/2, \xi \nu_E, \psi_E).	
 \end{align*}
In particular, the conductor satisfies $\mathrm{c}(\pi \otimes \barchi) = f\mathrm{c}(\xi\chi_E^{-1})$. 
\begin{definition}
We define $\alpha_{\pi, \chi}$ for $\pi$ supercuspidal as $\mathrm{Nm}(\alpha_{\xi\chi_E^{-1}})$ where $\alpha_{\xi\chi_E^{_1}}$ is such that 
\begin{align*}
\xi\chi_E^{-1}(1+x) = \psi(\alpha_{\xi\chi_E^{-1}}x),	
\end{align*}
for $x \in \p_\mathbf{E}^{c(\xi\chi_E^{-1})/2}$. 
\end{definition}    

The other case is that $\pi$ is principal series. In this case we write $\pi = \chi_0 \boxplus \chi_0^{-1} = \Ind_{B(F)}^{G(F)}(\chi_0, \chi_0^{-1})$. The gamma factor and epsilon factor both factor into the product of $\GL_1$ gamma and epsilon factors. Namely, 
\begin{align*}
	\gamma(\pi \otimes \nu) = \gamma(\chi_0\nu)\gamma(\chi_0^{-1}\nu)
\end{align*}
and
\begin{align*}
\epsilon(1/2, \pi \otimes \nu, \psi) = \epsilon(1/2, \chi_0 \nu, \psi) \epsilon(1/2, \chi_0^{-1}\nu, \psi). 	
\end{align*}
To understand $\mathrm{c}(\pi \otimes \barchi)$, let $\chi_\flat = \chi_0\chi^{-1}$ and $\chi_\sharp = \chi_0^{-1}\chi^{-1}$. Then $\mathbf{C}(\pi \otimes \barchi)$ is the sum of the conductors of $\chi_\flat$ and $\chi_\sharp$, i.e.
\begin{align*}
\mathbf{C}(\pi \otimes \barchi) = c(\chi_\flat) + c(\chi_\sharp).
\end{align*}
\begin{definition}
For $\pi$ principal series we define $\alpha_{\pi, \chi} = \alpha_{\chi_\sharp}\alpha_{\chi_\flat} \in F$, where $\alpha_{\chi_\flat}$ and $\alpha_{\chi_\sharp}$ are defined analogously as Proposition \ref{prop:chi}. 
\end{definition}

\subsection{$\GL_2$ Hecke-Jacquet-Langlands Theory}
First we address the global theory. Let $\F$ be the global field and $F = \F_v$ be the corresponding local field. Let $\varphi$ be a cusp form on $GL_2$, i.e. $\varphi \in L^2_{\mathrm{cusp}}(\GL_2(\F) \backslash \GL_2(\A))$. For $\chi$ a character of $\F^\times \backslash \A^\times$ we define the global zeta integral 
\begin{align*}
    Z(s, \varphi, \chi) = \int_{\F^\times \backslash \A^\times} \varphi(a(y))\chi(y)|y|^{s - 1/2}\,d^\times y.
\end{align*}
By the left-invariance of $w$, we have
\begin{align*}
   \varphi(a(y)) = \varphi(w a(y)) = \varphi(w a(y) w^{-1} w) = \omega(y)\varphi(a(y^{-1})w),
\end{align*}
where $\omega$ denotes the central character of $\pi$. Note that this relationship also forces $\varphi$ to decay at $0$, as well as at infinity (since it is a cusp form). 
Hence the global functional equation says that 
\begin{align*}
    Z(s, \varphi, \chi) = Z(1-s, \pi(w)\varphi, \chi^{-1}\omega^{-1}).
\end{align*}
From the global functional equation, we can deduce the local functional equation by using the factorization $\varphi$ in its Whittaker model. We have
\begin{align*}
    Z(s, \varphi, \chi) = \prod_v Z(s, W_{\varphi, v}, \chi_v),
\end{align*}
where for $\mathrm{Re}(\chi_v) + \mathrm{Re}(s) > \frac{1}{2}$
\begin{align*}
    Z(s, W_v, \chi_v) = \int_{\F_v^\times} W_v\begin{pmatrix} a &  \\ & 1 \end{pmatrix} \chi_v(a) |a|^{s-1/2}\,d^\times a. 
\end{align*}
When $v$ is unramified, 
\begin{align*}
    Z(s, W_v, \chi_v) = L(s, \pi_v \otimes \chi_v) = (1 - \mu_v\chi_v(\varpi)q_v^{-s})^{-1}(1 - \nu_v\chi_v(\varpi)q_v^{-s})^{-1},
\end{align*}
where $\pi_v = \rm{Ind}_{B(F_v)}^{G(F_v)}(\mu_v, \nu_v)$. The identity 
\begin{align*}
    Z(s, \varphi, \chi) = L(s, \pi \otimes \chi) \prod_{v} \frac{Z(s, W_v, \chi_v)}{L(s, \pi_v \otimes \chi_v)}
\end{align*}
leads to 
\begin{prop}{Local Functional Equation (see \cite[Thm 2.18]{JL})} \label{prop:funeq}
We have
\begin{align*}
   \frac{ Z(s, W_v, \chi_v)}{L(s, \pi_v \otimes \chi_v)} \epsilon(s, \pi_v , \chi_v, \psi_v) = \frac{Z(1-s, \pi(w)W_v, \omega_v^{-1}\chi_v^{-1})}{L(1-s, \pi_v \otimes \omega_v^{-1}\chi_v^{-1})},
\end{align*}
where $\epsilon(s, \pi_v, \chi_v, \psi_v)$ is of the form $Ae^{Bs}$. More succinctly, we may write
\begin{align*}
    Z(s, W_v, \chi_v) \gamma(s, \pi_v \otimes \chi_v, \psi_v) = Z(1-s, \pi(w)W_v, \omega_v^{-1}\chi_v^{-1}),
\end{align*}
where $\gamma(s, \pi_v \otimes \chi_v, \psi_v) = \epsilon(s, \pi_v, \chi_v, \psi_v)\frac{L(1-s, \pi_v \otimes \omega_v^{-1}\chi_v^{-1})}{L(s, \pi_v \otimes \chi_v)}$.
\end{prop}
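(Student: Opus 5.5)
This is the Jacquet--Langlands local functional equation (Proposition \ref{prop:funeq}). I would not try to deduce it from the global argument sketched above; I would prove it locally, following Jacquet--Langlands and the standard uniqueness-of-invariant-functionals method. The plan has four steps.

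\emph{Convergence and meromorphy.} Since $\pi$ is generic, the Kirillov model shows that every $W\in\mathcal{W}(\pi,\psi)$ restricted to $A$ is the sum of two pieces.
\begin{itemize}
\item A compactly supported smooth function on $F^\times$.
\item Finitely many terms near $0$ of the form $\phi(y)\mu(y)\,\mathrm{val}(y)^k$, with $\phi$ Schwartz and $\mu$ an exponent of the Jacquet module.
\end{itemize}
Hence $Z(s,W,\chi)$ converges for $\mathrm{Re}(s)\gg 0$ and is a rational function in $q^{-s}$. Moreover, the span of $\{Z(s,W,\chi)\}$ is a fractional ideal of $\C[q^{\pm s}]$ containing $1$, whose generator is by definition $L(s,\pi\otimes\chi)$. (For principal series and supercuspidals this matches the factors stated above.)

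\emph{Uniqueness.} For $s$ in an open set, both sides of the desired identity are linear functionals $\Lambda$ on $\pi$ satisfying
\[
\Lambda(\pi(a(y))W)=\chi^{-1}(y)|y|^{1/2-s}\Lambda(W).
\]
For the left side this is a change of variables. For the right side it follows from the intertwining identity
\[
\pi(w)\pi(a(y))=\omega(y)\,\pi(a(y^{-1}))\pi(w)
\]
in $\PGL_2$-compatible form. I would show that the space of such functionals is at most one-dimensional for generic $s$. Restricted to the Kirillov model, any such functional is determined by its values on $C_c^\infty(F^\times)$, where it must be a multiple of the Mellin transform at $\chi|\cdot|^{s-1/2}$. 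It then extends uniquely to the finite-dimensional quotient by $C_c^\infty(F^\times)$ (the Jacquet module), as long as $\chi|\cdot|^{s-1/2}$ avoids the finitely many exponents. This step is the main obstacle: one has to handle the Jacquet module terms carefully, and the resulting proportionality constant must be checked to be independent of $W$ and rational in $q^{-s}$.

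\emph{Form of $\epsilon$.} Uniqueness gives a constant $\gamma(s,\pi\otimes\chi,\psi)$ with
\[
Z(1-s,\pi(w)W,\omega^{-1}\chi^{-1})=\gamma\, Z(s,W,\chi).
\]
Dividing by the $L$-factors, the ratio
\[
\epsilon=\gamma\,\frac{L(s,\pi\otimes\chi)}{L(1-s,\pi\otimes\omega^{-1}\chi^{-1})}
\]
maps the generator of one fractional ideal to the generator of the other, up to the ideal identity. Applying the functional equation twice shows $\epsilon(s)\epsilon(1-s)$ is a constant. A unit in $\C[q^{\pm s}]$ is a monomial $cq^{-ks}$, which is of the form $Ae^{Bs}$. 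Finally, I would compute $\epsilon$ explicitly in the principal series case by reducing to the $\GL_1$ Tate functional equation recalled above, which recovers the stated product formula for $\epsilon$ and $\gamma$.
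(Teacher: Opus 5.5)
Your argument is correct, but it takes a genuinely different route from the paper's. The paper does not prove the proposition locally. It cites Jacquet--Langlands and motivates the result by a global-to-local sketch: left $w$-invariance of a cusp form gives $Z(s,\varphi,\chi)=Z(1-s,\pi(w)\varphi,\chi^{-1}\omega^{-1})$, then $Z(s,\varphi,\chi)$ is factored into local zeta integrals and the unramified $L$-factors are divided out.

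You instead work at a single place. Multiplicity one for functionals transforming under $a(y)$ by $\chi^{-1}(y)|y|^{1/2-s}$ produces $\gamma$: such a functional is forced to be a multiple of the Mellin transform on $C_c^\infty(F^\times)$, and to vanish on the finite-dimensional Jacquet-module quotient for generic $q^{-s}$. The fractional-ideal description of $L$ then shows that $\epsilon$ is a unit $cq^{-ks}$ of $\C[q^{\pm s}]$.

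What your route buys is a self-contained proof, valid for every generic irreducible $\pi$ and needing no globalization. It also directly yields the fact the paper relies on later, namely that $\gamma$ is independent of $W$, which is what underlies the Mellin-inversion formula for $\pi(w)$ in Lemma~\ref{lemma:weylkirillov}. The paper's sketch is shorter and shows where the identity comes from. To be made rigorous, though, it needs a globalization of $\pi_v$, and, to separate the finitely many ramified places, essentially the same local multiplicity-one input you supply.

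A few points to tidy:
\begin{itemize}
\item For general $\pi$ the two zeta integrals need not converge on a common open set. Argue instead with their rational continuations at generic $q^{-s}$; the pole set is uniform in $W$ because $Z/L$ is a Laurent polynomial.
\item The independence of the constant from $W$, which you list as an obstacle, is automatic from one-dimensionality. Its rationality follows by testing on $W_0$ with $W_0|_A=1_{\OO^\times}\chi^{-1}$, for which $Z(s,W_0,\chi)=1$.
\item ``Applying the functional equation twice'' actually gives $\epsilon(s,\pi\otimes\chi)\,\epsilon(1-s,\pi\otimes\omega^{-1}\chi^{-1})=1$, using $w^2=1$. It is not needed once you have the unit-ideal argument.
\item Identifying the gcd with the $L$-factors the paper uses requires knowing the exponents. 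Supercuspidals have none, since their Kirillov model is $C_c^\infty(F^\times)$; for $\chi_0\boxplus\chi_0^{-1}$ they are $\chi_0^{\pm1}|\cdot|^{1/2}$.
\end{itemize}
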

Throughout, we will specialize the local functional equation to $s = 1/2$, reserving $s$ for a variable name later. The local functional equation provides a way to evaluate $\pi(w)W(a(y))$ via Mellin inversion. Explicitly, (see \cite{MV}), we have an equality of the form 
\begin{align*}
    \pi(w)W\begin{pmatrix} y & \\ & 1 \end{pmatrix} = \int_{\widehat{F^\times}} \gamma(1/2, \pi \otimes \chi) \omega\chi(y)\int_{F^\times} W\begin{pmatrix} a & \\ & 1 \end{pmatrix} \chi(a)\,d^\times a \,d\chi,
\end{align*}
where $\omega$ is the central character of $\pi$, which for us is trivial.
 \section{Test Functions and $\Op$ Calculus}\label{sec:functions}

In this section we define $\Op(a)$ for Schwartz functions $a$ on $\mathrm{Lie}(G)^\ast$. For our class of functions $a$, we write a decomposition of $a$ as $a = \sum a_\tau$ with each $a_\tau$ a bump function around $T\tau$. We then show for these functions we can write 
\begin{equation*}
    \Op(a_\tau) = \Op^-(a_{\tau,+})\Op^0(a_{\tau,0})\Op^+(a_{\tau, -})
\end{equation*} in any order, where each $a_{\tau, \star}$ is a one-variable function on $F$. 
 \subsection{Test Functions}
 Let $\g$ be the Lie algebra of $G$ with coordinates 
\begin{align}
\begin{pmatrix} p_x & p_y \\ p_z & -p_x \end{pmatrix} = p \in \g, \label{eqn:liecoord}
\end{align}
and the dual Lie algebra $\g^\ast$ with coordinates 
\begin{align}
	\begin{pmatrix} \xi_x & \xi_z \\ \xi_y & -\xi_x \end{pmatrix} = \xi \in \g^\ast. \label{eqn:dualliecoord}
\end{align}
For the Lie algebra, we will use the basis $e_0 = \begin{pmatrix} 1/2 & \\ & -1/2\end{pmatrix}$, $e_+ = \begin{pmatrix} & 1 \\ & \ \end{pmatrix}$ and $e_- = \begin{pmatrix} & \ \\ 1 & \end{pmatrix}$. 
 For a fixed unramified unitary character $\psi$ of $\mathcal{O}_F$, there is a natural bilinear pairing $\g \times \g^\ast \to 
\C^\ast$ given by the trace pairing 
\begin{align*}
\langle x, \xi \rangle = \psi(\mathrm{Tr}(x\xi)).	
\end{align*}
Note that the coordinates $y$ and $z$ in Equations \ref{eqn:liecoord} and \ref{eqn:dualliecoord} get switched when going from the Lie algebra to the dual Lie algebra or vice versa; this is due to the duality induced by the trace pairing defined above.
In the archimedean case, one would construct operators that project onto \textit{microlocalized} vectors, i.e. those satisfying
\begin{align}\label{eqn:microlocalisation}
\pi(\exp x) v \approx e^{ix\tau/h}v,	
\end{align}
for all $x \in \g$ of size $|x| = O(h)$. When Equation \ref{eqn:microlocalisation} holds, we say $v$ is microlocalized at $\tau$. We think of $h$ as an infinitesimal scaling parameter that goes to zero, so that we are picking up vectors with symmetry properties under a shrinking neighborhood of the identity. We will attempt to mimic this construction in the non-archimedean case.

\begin{rem}
In other recent work involving microlocal analysis in non-archimedean setting, authors have worked with \textit{minimal vectors} in the case of supercuspidal representations (c.f. \cite{HN}), and \textit{microlocal lift vectors} in the case of principal series representations (c.f. \cite{NelsonQUE}). Here we opt to work with vectors that exactly match the notion of microlocalisation in Equation \ref{eqn:microlocalisation}. 
\end{rem}

Let $k(N)$ be the set of matrices in $\g  \ \cap \p^N\mathrm{Mat}_2(\OO)$. Let $K(N)$ be the image of $k(N)$ under the exponential map. 

\begin{definition}\label{def:nonarchlocalized}
    A non-archimedean microlocalized vector is a vector satisfying 
    \begin{align*}
        \pi(\exp x)v = \langle -x, \tau \rangle v
    \end{align*}
    for all $x \in k(N)$ and some $\tau \in \g^\star$.
\end{definition}

\begin{rem}
    In comparison to the archimedean notion in Equation \ref{eqn:microlocalisation}, we should think of $h$ as $h=1/T$ where $T = \varpi^{-N}$. Then as $N \to \infty$, $|T| \to \infty$, and microlocalization becomes an eigenvector property of an increasingly small neighborhood of the origin. 
\end{rem}
We want to find an $a$ so that $\Op(a)$ projects onto non-archimedean microlocalized vectors. To that end, we pick $a = a_\tau$ so that its inverse Fourier transform is given by  
\begin{align*}
a^\vee_\tau(x) = \mathrm{vol}(k(N))^{-1}1_{k(N)}(x)\langle x, T\tau \rangle. 	
\end{align*}
The phase $\langle x, T\tau \rangle$ is playing the role of the archimedean phase $e^{ix\xi/h}$ with $h = 1/T$. Note that $a_\tau$ is a $T$-dependent function, but we will often omit this dependence to avoid cumbersome notation.

These functions form the essential building blocks of the test functions we consider in this paper. The operators $\Op(a_\tau)$ have the property that their image consists of vectors that are microlocalized at $\tau$ in the sense of Definition \ref{def:nonarchlocalized}. The main theorem applies to any linear combination of functions $a_\tau$. We will sometimes refer to them as ``non-archimedean wavepackets". If we think of them as being supported on matrices whose elements are in $T^{-1}\mathcal{O}$, then the dual region is given by 
\begin{align*}
\{ \xi \in \g^\ast : |x \cdot \xi | \leq 1 \ \forall x \in T^{-1}k(0)\} =: Tk(0)^\perp.
\end{align*}
\begin{lemma}
The Fourier transform $a_\tau(\xi)$ is a bump function on $T\tau + Tk(0)^\perp$. 	
\end{lemma}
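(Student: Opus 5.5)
The plan is to compute the Fourier transform directly from the definition of $a_\tau^\vee$ and recognize it as a translated indicator function. By Fourier inversion with respect to the trace pairing, $a_\tau(\xi)$ is, up to the sign convention on the pairing and a constant,
\begin{align*}
a_\tau(\xi) = \int_{x \in \g} a_\tau^\vee(x)\,\langle x, \xi\rangle^{-1}\,dx = \mathrm{vol}(k(N))^{-1}\int_{k(N)} \langle x, T\tau\rangle\,\overline{\langle x, \xi\rangle}\,dx .
\end{align*}
Since $\langle \cdot,\cdot\rangle = \psi(\mathrm{Tr}(\cdot\,\cdot))$ is additive in each slot, the integrand equals $\langle x, T\tau - \xi\rangle$. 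So $a_\tau(\xi)$ is the normalized integral of an additive character over the compact open subgroup $k(N)$ of $\g$.

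The key step is the orthogonality of characters on a compact group. The map $x \mapsto \langle x, T\tau - \xi\rangle$ is a character of $k(N)$. Its average over $k(N)$ is $1$ if the character is trivial on $k(N)$ and $0$ otherwise. Hence
\begin{align*}
a_\tau(\xi) = \1\{\xi - T\tau \in k(N)^\perp\}, \qquad k(N)^\perp := \{\eta \in \g^\ast : \psi(\mathrm{Tr}(x\eta)) = 1 \ \forall x \in k(N)\}.
\end{align*}

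It remains to identify $k(N)^\perp$ with $Tk(0)^\perp$. Write $k(N) = \varpi^N k(0) = T^{-1}k(0)$. Then $\mathrm{Tr}(x\eta) = \mathrm{Tr}(T^{-1}x'\eta)$ with $x' \in k(0)$, so $\eta \in k(N)^\perp$ if and only if $T^{-1}\eta \in k(0)^\perp$, that is, $\eta \in Tk(0)^\perp$. This matches the description of the dual region given just before the lemma. Concretely, pairing $\eta$ against $p$ in the coordinates of \eqref{eqn:liecoord} and \eqref{eqn:dualliecoord} gives $\mathrm{Tr}(p\eta) = 2p_x\eta_x + p_y\eta_y + p_z\eta_z$. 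Because $\psi$ is unramified and $q$ is odd, so that $2$ is a unit, $k(0)^\perp$ consists of those $\eta$ with $\eta_x, \eta_y, \eta_z \in \OO$. Therefore $a_\tau$ is exactly the indicator function of the box $T\tau + Tk(0)^\perp$, which is a bump function there.

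The only real subtlety is bookkeeping. One must fix the sign and normalization in the Fourier inversion convention, which the paper leaves implicit, so that one gets $T\tau$ and not $-T\tau$. One must also check that $k(N)$ really is $\varpi^N k(0)$ as a subgroup of the trace-zero matrices. The odd-$q$ assumption enters only through the factor $2$ on the $x$-coordinate.
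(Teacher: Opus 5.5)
Your proof is correct and follows the paper's argument: write $a_\tau(\xi)$ as the normalized integral of the additive character $x\mapsto\langle x, T\tau-\xi\rangle$ over $k(N)$, then use orthogonality to get an indicator function. The only differences are cosmetic. The paper substitutes $x=T^{-1}u$ first and applies orthogonality on $k(0)$, whereas you apply it on $k(N)$ and then identify $k(N)^\perp=Tk(0)^\perp$. Your explicit description of $k(0)^\perp$ (using the factor $2$ and odd $q$) simply makes precise the paper's shorthand $1_{T(\tau+\OO)}$.
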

\begin{proof}
	We have
	\begin{align*}
	a_\tau(\xi) = \widehat{a_\tau^\vee}(\xi) &= \frac{1}{\mathrm{vol}(k(N))}\int_{x \in \g} \psi(xT\tau)1_{T^{-1}k(0)}(x)\psi(-x\xi)\,dx \\
	&= \frac{1}{\mathrm{vol}(k(N))}\int_{x \in T^{-1}k(0)} \psi(x(T\tau - \xi))\,dx \\ 
	&= \int_{u \in k(0)} \psi(u(\tau - T^{-1}\xi))\,du 
	= 1_{k(0)^\perp}(\tau - T^{-1}\xi) \\
        &=  1_{T(\tau + k(0)^\perp)}(\xi).
	\end{align*}
As an abuse of notation, we will sometimes write this characteristic function as $1_{T(\tau + \OO)}$ or even more succinctly as just $1_\tau^T$. 
\end{proof}
In other words, $a_\tau(\xi)$ is the characteristic function of the ball $B(T\tau, T)$ where 
\begin{align*}
B(x,R) = \{ y \in \g^\ast : |y - x| \leq R\}. 	
\end{align*}

Then we can imagine tiling the space $\g^\ast$ by translates of $Tk(0)^\perp$, one for each $\tau \in \g^\ast/Tk(0)^\perp$.
This gives rise to the following decomposition (c.f. \cite[Lemma 3.4]{Li}) 
\begin{prop}[Wavepacket Decomposition]\label{prop:wavepacketdecomp}
Let $T =\varpi^{-N}$ and suppose $a$ is a Schwartz function on $\g^\ast$ that is constant on cosets $Tk(0)^\perp$. Let $a_\tau = 1_\tau^T := 1_{T(\tau+\O)}$ be the characteristic function of the ball $B(T\tau, T)$. Then we may write 
\begin{equation}
    a = \sum_{\tau \in \g^\ast / Tk(0)^\perp} a(\tau)1_\tau^T.
\end{equation}
Each term $a1_\tau^T$ is Fourier supported on $T^{-1}k(0)^\perp$ and has constant size on each ball $B(T\tau, T)$. 
\end{prop}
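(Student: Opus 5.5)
The decomposition is a statement about functions on $\g^\ast$ that are constant on cosets of the lattice $Tk(0)^\perp$. I would prove it as a partition-of-unity argument on $\g^\ast$, followed by a Fourier-side check.

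\textbf{Step 1 (partition).} First note that $Tk(0)^\perp$ is an open compact additive subgroup of $\g^\ast$. Indeed, $k(0)^\perp$ is the dual lattice of $k(0)$ under the trace pairing, since $\psi$ is unramified. Scaling by $T$ preserves this. Hence the balls $B(T\tau,T)=T\tau+Tk(0)^\perp$, as $\tau$ runs over representatives of $\g^\ast/Tk(0)^\perp$, are exactly the cosets of this subgroup. They are pairwise disjoint and cover $\g^\ast$. So we have the pointwise identity
\[
\sum_{\tau} 1_\tau^T(\xi)=1 \quad \text{for every } \xi\in\g^\ast,
\]
with exactly one nonzero term for each $\xi$.

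\textbf{Step 2 (decomposition).} Multiply this identity by $a$. Because $a$ is constant on each coset, we have $a\,1_\tau^T = a(T\tau)\,1_\tau^T$, where $a(T\tau)$ denotes the constant value of $a$ on $B(T\tau,T)$. The statement writes this value as $a(\tau)$, and I would read it that way. This gives
\[
a=\sum_\tau a(\tau)1_\tau^T .
\]
Since $a$ is Schwartz, meaning locally constant and compactly supported, only finitely many terms are nonzero. The sum is therefore a finite sum of functions, and convergence is not an issue.

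\textbf{Step 3 (Fourier support).} By the preceding Lemma, run in reverse, $1_\tau^T$ is the Fourier transform of
\[
\mathrm{vol}(k(N))^{-1}\,1_{k(N)}(x)\,\langle x,T\tau\rangle .
\]
This function is supported on $k(N)=T^{-1}k(0)$. Multiplying by the constant $a(\tau)$ does not change the support. This is the asserted Fourier support condition, with the paper's notation $T^{-1}k(0)^\perp$ read as the dual of the cell $Tk(0)^\perp$. The "constant size" claim is immediate, since each summand is a constant multiple of the indicator of its ball.

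\textbf{Main obstacle.} The only delicate point is verifying that $k(0)^\perp$ is a lattice whose translates tile $\g^\ast$. This requires checking that the trace pairing identifies the dual of $k(0)=\g\cap\mathrm{Mat}_2(\OO)$ with an $\OO$-lattice. The coordinate swap of $y$ and $z$ is harmless here, and $q$ odd makes the factor $1/2$ in the $e_0$ coordinate a unit, so there is no discrepancy at the diagonal entry. I would verify this directly in the bases $e_0,e_\pm$: the pairing matrix has unit determinant, so $k(0)^\perp$ is $\OO$-spanned by the dual basis.
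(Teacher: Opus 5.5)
Your proposal is correct and is essentially the paper's argument: the paper justifies the decomposition only by the tiling of $\g^\ast$ by translates of $Tk(0)^\perp$ (with a pointer to Lemma 3.4 of Li), and your partition-of-unity argument, together with the Fourier-support check from the preceding Lemma, makes that tiling explicit. Your readings of the statement's loose notation are the right ones: $a(\tau)$ means the value of $a$ on $B(T\tau,T)$, and the Fourier support is $T^{-1}k(0)=k(N)$. By the same token, the sum should run over $\tau\in\g^\ast/k(0)^\perp$ (equivalently $T\tau\in\g^\ast/Tk(0)^\perp$) rather than $\tau\in\g^\ast/Tk(0)^\perp$, and your Step 1 should be phrased that way.
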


The wavepacket decomposition gives a description of $a$ everywhere on $\g^\ast$, but in practice, the relative character will only ``see" the $a1_\tau^T$ supported near a $T\tau$ that lives within $Tk(0)^\perp$ of the relative coadjoint orbit $\OO_{\pi, \sigma}$. 

\subsection{$\Op$ Calculus} \label{subsec:opcalculus}
We can now define a class of operators 
\begin{align*}
\Op(a) := \int_{x \in \g} a^\vee(x)\pi(\exp x)\,dx. 	
\end{align*}

Fix an element $\tau \in \g^\ast$, once and for all. Then $a_\tau^\vee(x)$ will be the $T$-dependent function as before that oscillates at frequency $T\tau$ on $k(N)$ and $a_\tau(\xi)$ its Fourier transform which is the bump $1_\tau^T$ on $T\tau$. 

We have the Iwahori factorization $K(N) = K_-K_0K_+$, where $K_-$ is the negative unipotent, $K_0$ is the diagonal, and $K_+$ is the unipotent subgroup. We denote the preimages of $K_\star$ under the exponential map by $k_\star$ for $\star \in \{0, +, -\}$. They correspond to the decomposition in the Lie algebra $\g = \g_0 + \g_+ + \g_-$ where each space is given by 
\begin{align*}
    \g_0 := \begin{pmatrix} \ast & \\ & -\ast \end{pmatrix} , \ \g_+ := \begin{pmatrix}  & \ast \\ &  \end{pmatrix}, \ \g_- := \begin{pmatrix} & \\ \ast & \end{pmatrix}.
\end{align*}
The trace pairing induces dualities $\g_0^\ast \cong \g_0$, $\g_{\pm}^\ast \cong \g_{\mp}$. Each $K_\ast$ is one-dimensional, so we can parametrize them by elements of $F$ satisfying appropriate congruence conditions. This means we can decompose $\tau \in \g^\ast$ by $\tau = \tau_0 + \tau_- + \tau_+$ where $\tau_0 \in k(N) \cap \g_0$ and $\tau_{\pm} \in k(N) \cap \g_{\mp}$. For $\star \in \{-, 0, +\}$, we define one-dimensional functions $a_{\tau_\star} \colon F \to \C$ so that
\begin{align*}
a_{\tau_\star}^\vee(x) =  \frac{1}{\mathrm{vol}(k_\star)}\1_{T^{-1}\OO}(x)\psi(xT\tau_\star).	
\end{align*}
Now we can define the operators $\Op^\star$ for $\star \in \{0, \pm\}$ and $a \colon F \to \C$ by 
\begin{align*}
    \Op^\pm(a) = \int_{x \in \g} a^\vee(x) \pi(\exp (x e_{\pm}))\,dx \label{eqn:Opstarb}
\end{align*}
and 
\begin{align*}
    \Op^0(a) = \int_{x \in \g} a^\vee(x) \pi(\exp (x e_0))\,dx
\end{align*}
 where $e_\star$ is the basis element in $k_\star$, for instance $e_+ = \begin{pmatrix} 0 & 1 \\ 0 & 0 \end{pmatrix}$.

On the functions $a_{\tau_\star}$ we have, for instance, 
\begin{align*}
	\Op^\pm(a_{\tau_\mp}) &= \int_{x \in k_\star} a_{\tau_\mp}^\vee(x)\pi(\exp(xe_\pm))\,dx \\
	&= \int_{u \in \OO} \psi(u \tau_\mp) \pi(\exp(T^{-1}ue_\pm))\,du,
\end{align*}
and similarly for $\Op^\pm(a_{\tau_0})$. Therefore, for $\dagger \in \{0, \pm\}$, function $a_{\tau_\dagger}$ has inverse Fourier transform that oscillates in the $\dagger$-direction at frequency $\tau_\dagger$, while $\Op^\star$ averages the action of $\pi$ restricted to $k_\star$. 

The next thing to understand is how these operators compose. We aim for something like 
\begin{align*}
	\Op(a)\Op(b) = \Op(ab),
\end{align*}
so that the operators commute. It will turn out that for our choice of $a$ and $b$, and for our purposes, we can essentially get this property. Before we state the proposition, we define a product
\begin{align*}
    \star : k(n) \times k(n) \to k(n)
\end{align*}
by requiring that
\begin{align*}
e^{x}e^y = e^{x \star y}.	
\end{align*}
The Baker-Campbell-Hausdorff formula then implies that  $x \star y = x + y + \frac{1}{2}[x,y] + O(x^2y, y^2x)$. 
\begin{prop}\label{prop:compositions}
Let $\xi \in k(-m)$, that is each entry of $\xi$ lies in $\varpi^{-m}\OO$. If $N \geq \frac{m}{2}$, then for $x,y \in k(N)$, 
\begin{align}
	\langle x , \xi \rangle \langle y, \xi \rangle = \langle (x \star y), \xi \rangle.
\end{align}	
In other words, the map 
\begin{align}
\psi_\xi : K(N) \xrightarrow[\log]{} k(N) \xrightarrow[]{\langle \ , \ \xi \rangle} \C^\times,	
\end{align}
is a character. 
\end{prop}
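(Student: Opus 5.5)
The plan is to reduce the identity to a valuation estimate on the Baker--Campbell--Hausdorff series. By definition $\langle x,\xi\rangle = \psi(\mathrm{Tr}(x\xi))$ and $\psi$ is additive, so
\begin{align*}
\langle x\star y,\xi\rangle\,\langle x,\xi\rangle^{-1}\langle y,\xi\rangle^{-1} = \psi\bigl(\mathrm{Tr}((x\star y - x - y)\xi)\bigr).
\end{align*}
Since $\psi$ is unramified, it suffices to show that $\mathrm{Tr}((x\star y-x-y)\xi)\in\OO$. I will write the BCH series as
\begin{align*}
x\star y - x - y = \tfrac12[x,y] + \sum_{k\ge 3} Z_k(x,y),
\end{align*}
where $Z_k$ is a rational linear combination of iterated brackets of length $k$ in $x$ and $y$.

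The key step is the quadratic term. If $x,y\in k(N)$, then each entry of $[x,y]$ lies in $\p^{2N}$. Each entry of $\xi$ lies in $\p^{-m}$, so $\mathrm{Tr}([x,y]\xi)\in\p^{2N-m}\subseteq\OO$, using $N\ge m/2$. Because $q$ is odd, $\tfrac12$ is a unit, so $\tfrac12\mathrm{Tr}([x,y]\xi)\in\OO$. This is exactly where the hypothesis $N\ge m/2$ is sharp.

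For the tail, each iterated bracket of length $k$ has entries in $\p^{kN}$. The BCH coefficients of $Z_k$ have denominators whose $p$-adic valuation grows at most linearly in $k$: roughly $v_p(\text{denominator})\le (k-1)/(p-1)$ up to a bounded constant, by the standard estimates (e.g.\ via Dynkin's formula or via $\log(\exp x\exp y)$ with the bounds $v_p(n!)\le (n-1)/(p-1)$). Let $e$ be the ramification index, so that in $F$ the valuation of a denominator $d$ is $e\,v_p(d)$. Then $Z_k$ has entries in $\p^{kN - e(k-1)/(p-1)-O(1)}$. For $k\ge 3$ and $N$ at least a small constant depending on $e$ and $p$, this exponent is $\ge 2N\ge m$, so $\mathrm{Tr}(Z_k\xi)\in\OO$. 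The same estimates show that the series converges and that $\exp,\log$ are mutually inverse bijections $k(N)\leftrightarrow K(N)$, so $\star$ is well defined on $k(N)$. Summing the terms, $\mathrm{Tr}((x\star y-x-y)\xi)\in\OO$, which gives the identity.

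Finally, the identity says precisely that $\psi_\xi(e^xe^y)=\psi_\xi(e^{x\star y})=\psi_\xi(e^x)\psi_\xi(e^y)$. Since $K(N)$ is a group and $\log$ is a bijection onto $k(N)$, $\psi_\xi$ is a character.

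I expect the main obstacle to be the control of the higher BCH terms: bounding the denominators uniformly in $k$ and making the mild lower bound on $N$ explicit. I would first try the argument for $F$ unramified over $\Q_p$ with $p$ odd, where $N\ge1$ suffices. I would then state the extra small-$N$ condition needed in the ramified case, noting that for the asymptotic regime $N\to\infty$ it is harmless.
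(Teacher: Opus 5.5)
Your proposal is correct and takes the same route as the paper: reduce to showing that the BCH correction $x\star y - x - y$ pairs trivially with $\xi$, using that $\tfrac12[x,y]$ has entries in $\p^{2N}$ while $\xi$ has entries in $\p^{-m}$ with $2N\ge m$. The paper only asserts that the higher-order terms, written as $O(x^2y)+O(y^2x)$, also lie in $k(2N)$; your bound on the BCH denominators makes this rigorous, and you rightly note that a mild lower bound on $N$ depending on $e$ and $p$ is implicitly needed (it is needed anyway for $\exp$ to converge on $k(N)$).
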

\begin{proof}
It suffices to prove that the difference 
\begin{align*}
\{x,y\} = x \star y - x - y 	
\end{align*}
pairs trivially with $\xi$. This is easy to verify. First note that $\{x,y\} = \frac{1}{2}[x,y] + O(x^2y) + O(y^2x)$. For $x,y \in k(N)$, we have $\{x,y\} \in k(2N)$. Therefore, every entry $a_{ij}$ of the matrix $[x,y]$ satisfies $a_{ij} \in \p^{2N}$. So 
\begin{align*}
\langle \{x,y\}, \xi \rangle \equiv \langle 	[x,y], \xi \rangle = \psi(a_{11}\xi_x + a_{12}\xi_y + a_{21}\xi_z - a_{11}\xi_x) = 1
\end{align*}
since each term $a_{ij}\xi_\star \in \OO$ by the condition $2N \geq m$. 
	
\end{proof}

To see how this helps the $\Op$-calculus, define 
\begin{align*}
(a \star b)(\xi) = \int_{x,y} a^\vee(x)b^\vee(y)\langle x \star y , \xi \rangle \,dx\,dy. 	
\end{align*}
 Then by Fourier inversion
\begin{align*}
\Op(a)\Op(b) &= \int_{x,y} a^\vee(x)	b^\vee(y) \pi(e^x)\pi(e^y)\,dx\,dy \\
&= \int_{x,y} a^\vee(x) b^\vee(y) \pi(e^{x \star y}) \,dx\,dy = \Op(a\star b).
\end{align*}
If $x \star y = x + y$, then this would be the standard convolution and we would get $a \star b = \widehat{\left(a^\vee \ast b^\vee\right)} = ab$, and hence $\Op(a \star b) = \Op(ab)$. It suffices to show that the phase $\psi(\cdot \{x,y\}) = 1$ . At this point we will need to make a specific choice of $a$ and $b$. 
\begin{prop}
Recall $T = \varpi^{-N}$ is our scaling parameter and let $a = a_\tau = 1_{T(\tau + \OO)}$ with $|T\tau| \leq q^{2N}$. Then 
\begin{align}\label{eqn:opfactor}
\Op(a_\tau) = \Op^-(a_{\tau_+})\Op^0(a_{\tau_0})\Op^+(a_{\tau_-}),	
\end{align}
and moreover the operators on the right commute.
\end{prop}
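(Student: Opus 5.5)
The plan is to show that $\Op(a_\tau)$ and the composite on the right are both the operator of averaging $\pi$ over $K(N)$ against the character $\psi_{T\tau}$ of Proposition \ref{prop:compositions}.

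First, the hypothesis $|T\tau| \leq q^{2N}$ says precisely that $T\tau \in k(-m)$ with $m \leq 2N$. Hence Proposition \ref{prop:compositions} applies with $\xi = T\tau$, and the map $\psi_{T\tau} : K(N) \to \C^\times$, $e^x \mapsto \langle x, T\tau\rangle$, is a character of $K(N)$. Substituting $g = \exp x$ in the definition, and using that $\exp$ carries Haar measure on $k(N)$ to Haar measure on $K(N)$ (the Jacobian is $1$ on $k(N)$ for $q$ odd and $N \geq 1$), we get
\begin{align*}
\Op(a_\tau) = \mathrm{vol}(K(N))^{-1}\int_{K(N)} \psi_{T\tau}(g)\pi(g)\,dg .
\end{align*}
Here the sign convention in $\langle x, T\tau\rangle$ is the one fixed in $a^\vee_\tau$. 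This operator is $e_{\psi_{T\tau}}$, the idempotent projecting onto the $\psi_{T\tau}^{-1}$-isotypic vectors.

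Next, apply the same reasoning in each direction. For $\star \in \{-,0,+\}$, the restriction of $\psi_{T\tau}$ to $K_\star = \exp(k_\star)$ is $\exp(u e_\star) \mapsto \psi(\mathrm{Tr}(u e_\star T\tau))$. By the trace pairing dualities $\g_\pm^\ast \cong \g_\mp$ and $\g_0^\ast \cong \g_0$, this equals $\psi(u T\tau_{\star'})$, where $\tau_{\star'}$ is the component dual to $\g_\star$; that is, $\tau_+$ pairs with $e_-$, $\tau_0$ with $e_0$, and $\tau_-$ with $e_+$. This is exactly how the one-variable functions $a_{\tau_+}, a_{\tau_0}, a_{\tau_-}$ were set up. Consequently $\Op^-(a_{\tau_+}) = e_{\psi_{T\tau}|K_-}$, and similarly for the other two, each being the normalized average over $K_\star$ against the restricted character.

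Now use the Iwahori factorization $K(N) = K_-K_0K_+$, under which the Haar measure of $K(N)$ is the product of the Haar measures of the factors (the multiplication map is a measure-preserving homeomorphism from $K_-\times K_0\times K_+$, suitably normalized). Since $\psi_{T\tau}$ is a character of $K(N)$, we have $\psi_{T\tau}(k_-k_0k_+) = \psi_{T\tau}(k_-)\psi_{T\tau}(k_0)\psi_{T\tau}(k_+)$. Writing $g = k_-k_0k_+$ in the integral then gives
\begin{align*}
e_{\psi_{T\tau}} = e_{\psi_{T\tau}|K_-}\, e_{\psi_{T\tau}|K_0}\, e_{\psi_{T\tau}|K_+},
\end{align*}
which is \eqref{eqn:opfactor}.

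For commutativity, factor $K(N)$ in any other order, for instance $K_+K_0K_-$. The same computation shows that every ordering of the three projections equals $e_{\psi_{T\tau}}$, so they all agree. To get pairwise commutation, note that $e_{\psi|K_\star}e_{\psi} = e_{\psi} = e_{\psi}e_{\psi|K_\star}$ because $\psi$ is a character of $K(N)$. A cleaner route is to observe that each partial product $K_\star K_{\star'}$ spans a subgroup of $K(N)$ on which $\psi_{T\tau}$ is a character, and then apply the same factorization inside that subgroup.

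The main obstacle is the bookkeeping in the second step, namely matching each component $\tau_\star$ with the correct direction and sign, together with the measure normalizations ($\mathrm{vol}(k_\star)$ against the $T^{-1}\OO$ scaling). The only genuinely nontrivial input is that $\psi_{T\tau}$ is multiplicative across the Iwahori factors, and that is supplied by Proposition \ref{prop:compositions} once $2N \geq m$.
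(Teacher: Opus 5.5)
Your proof of the factorization itself is correct, and cleaner than the paper's. The paper checks multiplicativity of $\psi_{T\tau}$ across the Iwahori factors by a congruence computation modulo $\p^{2N}$. You instead use directly that $\psi_{T\tau}$ is a homomorphism on $K(N)$ (Proposition~\ref{prop:compositions} with $m=2N$), together with the product decomposition of Haar measure on $K(N)=K_-K_0K_+$. The same argument shows that all six orderings of the triple product equal $\Op(a_\tau)$. Your subgroup route also correctly shows that $\Op^0(a_{\tau_0})$ commutes with $\Op^{\pm}(a_{\tau_\mp})$, because $K_0K_\pm=K_\pm K_0$ are subgroups of $K(N)$.

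The gap is the pairwise commutation of $P_-:=\Op^-(a_{\tau_+})$ and $P_+:=\Op^+(a_{\tau_-})$.

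\begin{itemize}
\item Equality of all triple products does not imply it, and neither does $e_{\psi|K_\star}e_\psi=e_\psi$.
\item The subgroup route breaks exactly here. Write $n^-(c)=\exp(ce_-)$. Then $n(b)n^-(c)=n^-(c/t)\,\mathrm{diag}(t,t^{-1})\,n(b/t)$ with $t=1+bc\in 1+\p^{2N}$, so $K_-K_+$ is not a subgroup. The subgroup generated by $K_\pm$ is $K_-K_0(2N)K_+$, where $K_0(2N)=\exp(\p^{2N}e_0)$.
\item Since $|T\tau_x|\le q^{2N}$, $\psi_{T\tau}$ is trivial on $K_0(2N)$. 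Factoring inside that subgroup therefore only gives $P_-EP_+=P_+EP_-$, where $E$ is the projector onto $K_0(2N)$-fixed vectors.
\end{itemize}

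This cannot be repaired by better bookkeeping. By uniqueness of the Iwahori decomposition, $K_+K_-\cap K_-K_+=K_+\cup K_-$, so the measures defining $P_+P_-$ and $P_-P_+$ are mutually singular. The two operators genuinely fail to commute on $K(N)$-types where $K_0(2N)$ acts nontrivially. An example is a Heisenberg representation of $K(N)/K(3N)$ whose character on $K(2N)$ is given by a diagonal $\xi$ with $|\xi_x|=q^{3N}$. There $P_\pm$ are rank-one projections onto distinct, non-orthogonal lines.

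The paper's own argument has the same hole. Its substitution $ze_+=ye_++\{ye_+,xe_-\}$ is not possible, since $\{ye_+,xe_-\}=xy\,e_0+\cdots$ has an $e_0$-component. Discarding that component ``modulo $k(2N)$'' is legitimate for the phase but not for $\pi$.

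Your method does yield a weaker statement: $P_+$ and $P_-$ commute on $K_0(2N)$-fixed vectors, in particular on the image of $\Op^0(a_{\tau_0})$. The ingredients are:
\begin{itemize}
\item the decomposition displayed above;
\item the identity $\mathrm{diag}(t,t^{-1})\,n(x)\,\mathrm{diag}(t,t^{-1})^{-1}=n(t^2x)$;
\item the substitution $(b,c)\mapsto(b(1+bc),\,c/(1+bc))$, which has Jacobian $1$ and leaves the phases unchanged because the shifts lie in $\p^{3N}$.
\end{itemize}
This is strictly weaker than the pairwise commutation asserted in the statement.
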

\begin{proof}
Recall we have the Iwahori factorization $K(N) = K_-(N)K_0(N)K_+(N)$. Let 
\begin{align*}
    \psi_\tau : K(N) \to \C^\times,
\end{align*}
be the character given by $\psi_\tau(x) = \langle x, T\tau \rangle = \psi(\mathrm{trace}(xT\tau)$. We want to show that that the equation
\begin{align*}
    \psi_\tau(k_-k_0k_+) = \psi_\tau^-(k_-)\psi_\tau^0(k_0)\psi_\tau^+(k_+).
\end{align*}
hold for specific choices of characters $\psi_\tau^{-}$, $\psi_\tau^0$, and $\psi_\tau^+$ of $K_-$, $K_0$, and $K_+$ associated to the coordinates $\tau_-, \tau_+, \tau_0$. 
 This will imply Equation \ref{eqn:opfactor} because we can also factor $\pi(\exp k_-k_0k_+)$ as $\pi(\exp k_-)\pi(\exp k_0)\pi(\exp k_+)$. By Proposition \ref{prop:compositions}, the character $\psi_\tau$ is trivial on $K(2N)$ and moreover $[K(N), K(N)] \subset K(2N)$ so $\psi_\tau$ defines a character on $K(N)/K(2N)$. Therefore, it suffices to show that $\psi_\tau(k_-k_0k_+)$ and $\psi_\tau^-(k_-)\psi_\tau^0(k_0)\psi_\tau^+(k_+)$ agree up to terms in $K(2N)$. To that end, consider the matrix product 
 \begin{align*}
 \begin{pmatrix} 1 & \\ n & 1 \end{pmatrix} \begin{pmatrix} t & \\ & t^{-1} \end{pmatrix} \begin{pmatrix} 1 & u \\ & 1 \end{pmatrix} \in K_-(n)K_0(n)K_+(n).	
 \end{align*}
Expanding we obtain 
\begin{align*}
\begin{pmatrix} t & tu \\ nt & ntu+t^{-1} \end{pmatrix}.
\end{align*}
We know that $t \equiv 1 \bmod \p^N$ and $u,n \equiv 0 \bmod \p^N$ so $tu \equiv u \bmod \p^{2N}$ and $tn \equiv n \bmod \p^{2N}$. Moreover $ntu + t^{-1} \equiv t^{-1} \bmod \p^{2N}$. So this matrix product satisfies 
\begin{align*}
	 \begin{pmatrix} 1 & \\ n & 1 \end{pmatrix} \begin{pmatrix} t & \\ & t^{-1} \end{pmatrix} \begin{pmatrix} 1 & u \\ & 1 \end{pmatrix} \equiv \begin{pmatrix} t & u \\ n & t^{-1} \end{pmatrix} \bmod \p^{2N}.
\end{align*}
It's easy to verify that every ordering of the matrix product above is congruent to \\ $\begin{pmatrix} t & u \\ n & t^{-1} \end{pmatrix} \bmod \p^{2N}$. Since $\psi_\tau$ is trivial on $K(2N)$, this concludes the proof of (\ref{eqn:opfactor}). Next we prove the commutativity. We prove it for $\Op^+$ and $\Op^-$, the other cases are identical. We have
\begin{align*}
    \Op^+(a_{\tau_-})\Op^-(a_{\tau_+}) = \int_{x, y \in \varpi^N \OO} \psi(xT\tau_-)\psi(xT\tau_+)\pi(\exp(ye_+ \star xe_-))\,dx\,dy.
\end{align*}
We can write $ye_+ \star xe_- = ye_+ + xe_- + \{ye_+, xe_-\} = xe_- + ye_+ + \{ye_+,xe_-\}$ and make the change of variables $ze_+ = ye_+ + \{ye_+, xe_-\}$. But note that $ze_+ \equiv ye_+ \bmod k(2N)$, so we obtain 
\begin{align*}
    \Op^+(a_{\tau_-})\Op^-(a_{\tau_+}) &= \int_{x,z \in \varpi^N\OO} \psi(xT\tau_-)\psi(zT\tau_+)\pi(\exp(xe_-))\pi(\exp(ze_+))\,dx\,dz \\
    &= \Op^-(a_{\tau_+})\Op^+(a_{\tau_-}).
\end{align*}
\end{proof}

\section{Invariant Functionals and Relative Characters}\label{sec:functionals}
 Recall from Equation \ref{eqn:relchar} the relative character for our case is given by
\begin{align*}
\mathcal{H}_\sigma(a) = \sum_{v \in \mathcal{B}(\pi)} \ell_\chi(\Op(a)v)\overline{\ell_\chi(v)},	
\end{align*}
for $\pi$ unitary, where 
\begin{align*}
\ell_\chi(v) = \int_{H} W_v\begin{pmatrix} h & \\ & 1 \end{pmatrix} \overline{\chi(h)}\,dh.	
\end{align*}
Here $W_v$ is the image of $v$ in the Whittaker model of $\pi$ and $\sigma$ is one dimensional so it is a character. One can think of the relative character as follows: Take the inner product $\langle g^{-1} \ell_\chi, \ell_\chi \rangle$. This is a distribution on $G$. One can expand the inner product formally, applying Parseval, to obtain the function on $G$
\begin{align*}
\sum_{v \in \mathcal{B}(\pi)} \langle 	\ell_\chi, gv \rangle \langle v, \ell_\chi\rangle.
\end{align*}
Now to obtain a real number out of this, we integrate against $g$ with an integral operator $\pi(f)$ or, equivalently, an operator $\Op(a)$. This gives the relative character. 

In order to make use of this viewpoint, it will be beneficial to identify $\ell_\chi$ with a vector. Such a vector does not exist, in general, but in fact we only need to find a vector that agrees with $\ell_\chi$ on the image of $\Op(a_\tau)$, as that is all we will use the functional for. To this end, we define the following vector:
\begin{definition}
	For $R \in q^{\Z}$, let $v_\chi^R \in \pi$ be a vector in the Kirillov model given by 
	\begin{align}
	v_\chi^R(h) = \chi(h)\1_{[-R, R]}(v(h)).	
	\end{align}
\end{definition}
\begin{prop}
Let $v$ be in $\pi$. Then for $\chi$ a unitary character of $F^\times$, we have
\begin{align}
	\ell_\chi(v) = \lim_{R \to \infty} \langle v, v_\chi^R \rangle. 
\end{align}
\end{prop}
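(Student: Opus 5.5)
We unwind the definitions in the Kirillov model. For $v \in \pi$, write $W_v(h) = W_v(a(h))$. The inner product is
\begin{align*}
\langle v, v_\chi^R \rangle = \int_{F^\times} W_v(h)\,\overline{\chi(h)}\,\1_{[-R,R]}(v(h))\,d^\times h = \int_{|v(h)| \leq \log_q R} W_v(h)\barchi(h)\,d^\times h .
\end{align*}
Here we interpret the condition $v(h) \in [-R,R]$ as the truncation to the annuli $q^{-R}\le |h| \le q^{R}$. This is a truncation of the integral defining $\ell_\chi(v)$. The proposition therefore reduces to the claim that the integral $\int_{F^\times} W_v(h)\barchi(h)\,d^\times h$ converges, either absolutely or as a limit over the symmetric exhaustion by annuli $\{q^{-R} \le |h| \le q^{R}\}$, and that it equals $\ell_\chi(v)$ as defined.

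The plan is to split $F^\times$ into a large-$|h|$ region and a small-$|h|$ region and control $W_v$ on each.

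For large $|h|$, smoothness of $v$ gives $\pi(n(x))v = v$ for $x$ in some ideal $\p^m$. Hence $W_v(h)(\psi(hx)-1) = 0$ for all $x \in \p^m$, so $W_v(h) = 0$ once $|h|$ is large ($\psi$ unramified). Thus the region $v(h) \ll 0$ contributes nothing for $R$ large.

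For small $|h|$, we use the standard asymptotics of Kirillov functions near $0$.
\begin{itemize}
\item If $\pi$ is supercuspidal, the Kirillov model is exactly $C_c^\infty(F^\times)$, so $W_v$ also vanishes near $0$. The integral is then a finite sum, and the limit stabilizes for $R$ large.
\item If $\pi = \chi_0 \boxplus \chi_0^{-1}$ is principal series, then for $|h|$ small,
\begin{align*}
W_v(h) = |h|^{1/2}\bigl(c_1\,\chi_0(h) + c_2\,\chi_0^{-1}(h)\bigr)
\end{align*}
(with $c_1 \chi_0(h) + c_2 |v(h)| \chi_0(h)$ in the case $\chi_0^2=1$), where $c_1, c_2$ are locally constant in the unit variable, i.e.\ constant on cosets of $\O^\times$ modulo a fixed $U(k)$.
\end{itemize}

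The contribution of the shell $|h| = q^{-n}$ to the truncated integral is then
\begin{align*}
q^{-n/2}\int_{\O^\times} \bigl(c_1\,\chi_0\barchi(\varpi^n u) + c_2\,\chi_0^{-1}\barchi(\varpi^n u)\bigr)\,d^\times u .
\end{align*}
Because $\pi$ is unitary, $\chi_0$ is unitary or lies in the complementary range $|\mathrm{Re}|<1/2$. In either case the factor $q^{-n/2}$ beats any growth, so the sum over $n$ converges absolutely and geometrically. This is the one place where temperedness or unitarity is needed: it guarantees $|W_v(h)| \ll |h|^{1/2-\epsilon}$, which is integrable against $d^\times h$ near $0$. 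Moreover, under the hypothesis $\chi \ne \chi_0^{\pm1}$ up to unramified twist of the main theorem, the $\O^\times$-integrals vanish outright once $c(\chi_0^{\pm1}\barchi) > 0$, so most shells contribute nothing.

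With both tails controlled, dominated convergence (or the eventual stabilization of the finite sums) gives
\begin{align*}
\lim_{R\to\infty}\langle v, v_\chi^R\rangle = \int_{F^\times} W_v(h)\barchi(h)\,d^\times h = \ell_\chi(v).
\end{align*}
The main obstacle is justifying the germ expansion of $W_v$ near $0$ for principal series, which we take from the standard theory of the Kirillov model (Jacquet–Langlands, or \cite[Sec 37]{BH}). We also need to confirm that $\ell_\chi$ is defined by the convergent integral, or by its analytic continuation at $s=1/2$, consistently with the truncation. If it is defined by continuation, we instead note that for unitary $\pi$ the local zeta integral at $s = 1/2$ lies in its region of absolute convergence, so the two definitions coincide.
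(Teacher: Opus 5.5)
Your proposal is correct and follows the paper's proof. Both reduce the claim to absolute convergence of $\int_{F^\times} W_v(h)\overline{\chi}(h)\,d^\times h$, using that $W_v$ vanishes for $|h|$ large and is $\ll |h|^{1/2-\epsilon}$ near $0$, and then pass to the limit over the exhausting truncations; you just supply the details (the $N$-equivariance argument at infinity and the explicit germs near $0$) that the paper delegates to the cited discussion of Michel--Venkatesh.
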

\begin{proof}
 This follows from the behavior of the Whittaker function $W_v$. $W_v(y)$ behaves like $|y|^{1/2-\epsilon}$ as $|y| \to 0$ and $W_v(y)$ decays at infinity (see the discussion in \cite[Sec 3.2]{MV}). Hence $\ell_\chi(v)$ is absolutely convergent and so we can write it as $\lim_{R \to \infty} \langle v, v_\chi^R\rangle$. 
\end{proof}

 \section{Analysis of the Relative Character}\label{sec:main calculations}
 We will now start the analysis of computing the image of $\Op(a_\tau)$ following the proof sketch in Section \ref{subsec:proofsketch}. We won't have to use that $\pi$ is supercuspidal or principal series until the calculation of $\Op^-$ where an analysis of the $\GL_2 \times \GL_1$ functional equation will force a choice of representation $\pi$. The main result of this section is Proposition \ref{prop:table} which computes the relative character. We will prove Proposition \ref{prop:table} in Section \ref{sec:proofofprop}.
 \subsection{The Image of $\Op(a_\tau)$}

We now fix a $\tau$ for the rest of the paper and choose $a = a_\tau = 1_\tau^T =\1_{B(T\tau, T)}(\xi)$. As a short-hand, $a_\star$ will mean $a_{\tau_\star}$. We first start with $\Op^0(a_0)v_\chi^R$. For any vector $v$, we have
\begin{align*}
\ell_\chi(\Op^0(a_0)v) &= \int_{h \in H} \left(\Op^0(a)W_v\right)\begin{pmatrix} h & \\ & 1 \end{pmatrix} \overline{\chi}(h)\,d^\times h	\\
&= \int_{h \in F^\times} \int_{u \in \OO} \psi(u\tau_x)\left(\pi\left(\exp(T^{-1}ue_{0}\right)W_v\right)\begin{pmatrix} h & \\ & 1 \end{pmatrix}\overline{\chi}(h)\,du\,d^\times h \\
&=\int_{h \in F^\times} \overline{\chi}(h)\int_{u \in \OO} \psi(u\tau_x) W_v\begin{pmatrix} h \exp(T^{-1} u) & \\ & 1 \end{pmatrix} \,d^\times h \,d u \\
    &= \int_{u \in \OO} \psi(u\tau_x)\chi(\exp(T^{-1} u))\,du\int_{h \in F^\times} W_v\begin{pmatrix} y & \\ & 1 \end{pmatrix} \overline{\chi}(y)\,d^\times h \\
    &= \left(\int_{u \in \OO} \psi(u \tau_x)\chi(\exp(T^{-1}u))\,du\right)\ell_\chi(v).
\end{align*}
where $e_0$ is the basis element $\begin{pmatrix} 1/2 & \\ & -1/2 \end{pmatrix}$.
It remains to evaluate the integral. Take $N$ large enough so that $2N \geq c(\chi)$, as in the theorem statement. Since $q$ is odd, $\chi(\exp x) = \chi(1+x)$ for $x \in \p^{c(\chi)/2}$. Therefore by Proposition \ref{prop:chi} we can write
\begin{align*}
\ell_\chi(\Op^0(a)v) &= \left(\int_{u \in \OO} \psi(u(\tau_x + \alpha_\chi T^{-1}))\,du\right)\ell_\chi(v) = 1_{\OO}(\tau_x + T^{-1}\alpha_\chi)\ell_\chi(v)\\
&= a_0(\alpha_\chi)\ell_\chi(v)
\end{align*}

We now see what the $H$-direction operator $\Op^0(a_0)$ is really doing. It wants to project onto vectors that are microlocalized at elements in the dual Lie algebra whose rescaled $x$-coordinate is near the pre-image under the map $\mathrm{Lie}(G)^\ast \to \mathrm{Lie}(H)^\ast$ of the conductor $f$ of $\chi$. In other words, it's detecting the `slice' $\OO_\pi \cap \left(\mathrm{c}(\chi) + \mathrm{Lie}(H)^\perp\right)$. \\

We next calculate $\Op^+(a_-)v_\chi^R$. For any vector $W_v$ in the Kirillov model, we have
\begin{align*}
(\Op^+(a_-)W_v)(h)&= \int_{u \in \OO}\psi(u\tau_y)\pi(\exp(T^{-1}ue_{+})W_v)\begin{pmatrix} h & \\ & 1 \end{pmatrix} \,du \\
&= \left(\int_{u \in \OO} \psi(u(\tau_y + T^{-1}h))\,du\right)W_v\begin{pmatrix} h & \\ & 1 \end{pmatrix} 
\end{align*}

The exact behavior of the integral depends on whether $\tau_y \in \OO$, but the general effect is to restrict the support of $h$. Indeed the dichotomy is as follows: 

\begin{align*}
    \int_{u \in \OO} \psi(u(\tau_y + \varpi^N h))\,du = \begin{cases} 1_{v(h) \geq -N}(h) & \text{ if } \tau_y \in \OO \\ 1_{\varpi^{-N-r}(\p^r + \varpi^r\tau_y)}(h) & \text{ if } \tau_y \in \varpi^{-r}\OO^\times, \ r > 0. \end{cases}
\end{align*}

Therefore 
\begin{align*}
\Op^+(a_-)v_\chi^R = \begin{cases}
 v_\chi^R(h)1_{v(h) \geq -N} & \text{ if } \tau_y \in \OO \\
 v_\chi^R(h)1_{\varpi^{-N-r}(\p^r + \varpi^r \tau_y)}(h) & \text{ if } \tau_y \in \varpi^{-r}\OO^\times, \ r > 0.
 \end{cases}
\end{align*}

To understand this better, first we'd like to make sense of the characteristic function $\1_{\varpi^{-N-r}(\p^r + \varpi^r \tau_y)}(h)$. Notice that 
\begin{align*}
\p^r + \varpi^r \tau_y \subset \OO^\times,	
\end{align*}
the characteristic function $\1_{\varpi^{-N-r}(\p^r + \varpi^r \tau_y)}(h)$ defines a character of $\OO^\times$. 
\begin{lemma}
	Let $u_0 \in \OO^\times$ and $\mathcal{X}_m$ denote the group of characters of conductor at most $m$. Then for $u \in \OO^\times$,
	\begin{align}
	1_{u_0 + \p^m}(u)	= \frac{1}{|\mathcal{X}_m|} \sum_{\omega \in \mathcal{X}_m} \omega^{-1}(u_0)\omega(u) =: \E_{\omega \in \mathcal{X}_m} \omega^{-1}(u_0)\omega(u)
	\end{align}
\end{lemma}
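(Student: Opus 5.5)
This is character orthogonality for the finite abelian group $\OO^\times/U(m)$. The plan is to reduce the sum over $\mathcal{X}_m$ to a sum over characters of that quotient group and then apply the standard orthogonality relation.

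First, identify $\mathcal{X}_m$ with the Pontryagin dual of the finite group $Q_m := \OO^\times/U(m)$. Here $\mathcal{X}_m$ means characters of $\OO^\times$ (the restriction to $\OO^\times$ is what matters, since $u,u_0\in\OO^\times$). A character $\omega$ has conductor at most $m$ exactly when it is trivial on $U(m)$, that is, when it factors through $Q_m$. Conversely, every character of $Q_m$ pulls back to such an $\omega$. The paper's normalization $|\mathcal{X}_m| = \zeta_F(1)^{-1}q^m = (q-1)q^{m-1}$ agrees with $|Q_m| = [\OO^\times : 1+\p^m]$ for $m\ge 1$. For $m=0$ the convention $U(0)=\OO^\times$ makes both sides the trivial sum, and the indicator $1_{u_0+\OO}(u)=1$ on $\OO^\times$, so this case is immediate.

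Next, rewrite the summand using multiplicativity: $\omega^{-1}(u_0)\omega(u) = \omega(u u_0^{-1})$. The sum then becomes $\frac{1}{|Q_m|}\sum_{\omega\in\widehat{Q_m}}\omega(g)$ with $g$ the image of $uu_0^{-1}$ in $Q_m$. By orthogonality for a finite abelian group, this average equals $1$ if $g$ is trivial and $0$ otherwise. For the vanishing case, pick $\omega_1$ with $\omega_1(g)\neq 1$, which exists because the dual of a finite abelian group separates points. Multiplying the sum by $\omega_1(g)$ permutes its terms, so the sum equals $\omega_1(g)$ times itself and must be zero.

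Finally, translate the condition back. The image of $uu_0^{-1}$ is trivial exactly when $uu_0^{-1}\in 1+\p^m$. Since $u_0$ is a unit, this is equivalent to $u\in u_0+\p^m$, giving the indicator $1_{u_0+\p^m}(u)$. The only point needing care is the bookkeeping identifying $\mathcal{X}_m$ with $\widehat{Q_m}$ and checking that the count matches $|\mathcal{X}_m|$; there is no real obstacle beyond that.
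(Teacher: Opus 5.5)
Your proof is correct and is the same argument the paper uses: the paper's proof just cites Fourier analysis (character orthogonality) on the finite group $(\OO/\p^m)^\times \cong \OO^\times/U(m)$, and you have written out those details. Treating $m=0$ separately is appropriate, since the paper's count $|\mathcal{X}_m| = \zeta_F(1)^{-1}q^m$ holds only for $m \geq 1$.
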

\begin{proof}
    The proof follows from Fourier analysis on the finite group $(\OO/\p^m)^\times$. 
\end{proof}

With this, we can update our calculation of $\Op^+(a_+)$ to read 
\begin{align*}
\Op^+(a_-)v_\chi^R = \begin{cases}
 v_\chi^R(h)1_{v(h) \geq -N} & \text{ if } \tau_y \in \OO \\
 v_\chi^R(h)1_{v(h) = -N-r} \E_{\omega \in \mathcal{X}_r} \omega^{-1}(\varpi^r\tau_y)\omega(\varpi^{N+r}h) & \text{ if } \tau_y \in \varpi^{-r}\OO^\times,
\end{cases}
\end{align*}
with $r > 0$. Note that both $\varpi^r \tau_y$ and $\varpi^{N+r}h$ are units. 

\subsection{Negative Unipotent \& Functional Equations}\label{subsec:weyl}

We next need to compute the contribution from the negative unipotent direction, that is, we need to understand $\Op^-(a)v$. First we record a simple lemma. 
\begin{lemma}
    We have 
    \begin{align}
        \ell_\chi(\pi(w)W_v) = \gamma(\pi \otimes \chi)\ell_{\chi^{-1}}(W_v).
    \end{align}
\end{lemma}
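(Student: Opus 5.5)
This is a direct application of the local functional equation (Proposition \ref{prop:funeq}) at $s = 1/2$ with trivial central character $\omega$. We start from the definition
\begin{align*}
\ell_\chi(\pi(w)W_v) = \int_{F^\times} (\pi(w)W_v)\begin{pmatrix} h & \\ & 1 \end{pmatrix}\overline{\chi}(h)\,d^\times h .
\end{align*}
Because $\chi$ is unitary, $\overline{\chi} = \chi^{-1}$, so this is the local zeta integral $Z(1/2, \pi(w)W_v, \chi^{-1})$. The functional equation, in its succinct $\gamma$-form with $\omega_v = 1$, reads
\begin{align*}
Z(1/2, W, \nu)\,\gamma(1/2, \pi \otimes \nu, \psi) = Z(1/2, \pi(w)W, \nu^{-1}).
\end{align*}
We apply it with $\nu = \chi$ and $W = W_v$. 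The right-hand side is then exactly $\ell_\chi(\pi(w)W_v)$, and the left-hand side is $\gamma(\pi\otimes\chi)\, Z(1/2, W_v, \chi)$.

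It remains to identify $Z(1/2, W_v, \chi) = \int W_v(a(h))\chi(h)\,d^\times h$ with $\ell_{\chi^{-1}}(W_v)$. By definition, $\ell_{\chi^{-1}}(W_v) = \int W_v(a(h))\overline{\chi^{-1}}(h)\,d^\times h$, and unitarity gives $\overline{\chi^{-1}} = \chi$. Combining the two steps yields the lemma, with $\gamma(\pi\otimes\chi)$ meaning $\gamma(1/2,\pi\otimes\chi,\psi)$ for our fixed unramified $\psi$.

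The only real issue is convergence. The functional equation is proved for $\mathrm{Re}(s)$ large and continued meromorphically, so we need both zeta integrals to converge absolutely at $s = 1/2$, where they then agree with their continuations.
\begin{itemize}
\item For $W_v$, absolute convergence holds by the Whittaker asymptotics already cited in the proof that $\ell_\chi(v) = \lim_R \langle v, v_\chi^R\rangle$: $W_v(y) \ll |y|^{1/2-\epsilon}$ as $y \to 0$, and $W_v$ decays at infinity.
\item For $\pi(w)W_v$, the same holds because it is again a Whittaker function of $\pi$.
\end{itemize}
Since $\pi$ is tempered, the integrals converge at $\mathrm{Re}(s) = 1/2$, and the $L$-factors in the $\gamma$-factor are finite there. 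Indeed, $L(s,\pi\otimes\chi)$ has no pole at $s = 1/2$ for unitary tempered $\pi$, so the identity specializes legitimately.

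The hypothesis in Theorem \ref{thm:main theorem} that $\chi \ne \chi_0^{\pm 1}$ up to unramified twist makes both $\chi_0\chi$ and $\chi_0^{-1}\chi$ ramified in the principal series case. This makes the $L$-factors trivial, so $\gamma = \epsilon$ and has modulus one by the corollary on $|\epsilon(1/2,\cdot)|$. That fact is not needed for the lemma itself but will be used afterwards.
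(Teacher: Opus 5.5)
Your proof is correct and is essentially the paper's argument. You write $\ell_\chi(\pi(w)W_v)$ as $Z(1/2,\pi(w)W_v,\chi^{-1})$, apply Proposition \ref{prop:funeq} with trivial central character, and recognize $Z(1/2,W_v,\chi)$ as $\ell_{\chi^{-1}}(W_v)$ via $\overline{\chi^{-1}}=\chi$. Your extra check that both zeta integrals converge absolutely at $s=1/2$, so that they agree with the continued functional equation, is a welcome addition that the paper leaves implicit; it holds for the unitary $\pi$ considered here, not only tempered ones.
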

\begin{proof}
    This is just a reformulation of the $\GL_2 \times \GL_1$ local functional equation (see Prop \ref{prop:funeq}). Indeed by the local functional equation, the LHS is 
    \begin{align*}
        \int_{h \in F^\times} W_v\left(\begin{pmatrix} h & \\ & 1 \end{pmatrix} w\right)\chi^{-1}(h)\,d^\times h = \gamma(\pi \otimes \chi)\int_{h \in F^\times} W_v\begin{pmatrix} h & \\ & 1 \end{pmatrix} \chi(h)\,d^\times h. 
    \end{align*}
\end{proof}

To compute the negative unipotent case, we will work with the functionals $\ell_\chi$ until it is more convenient to switch to the vectors $v_\chi^R$. For instance, we can start on the functional side by writing for any $v \in \pi$ with $W_v$ in the Kirillov model of $\pi$, 
\begin{align*}
\ell_\chi(\Op^-(a_+)v)	 &= \ell_\chi \left(\int_{u \in \OO} \psi(u\tau_z)(\pi(\exp(T^{-1}ue_{-}))W_v)\,du\right) \\
&= \ell_\chi \left(\int_{u \in \OO} \psi(u\tau_z)(\pi(w)\pi(\exp(T^{-1}ue_{+}))\pi(w) W_v)\,du\right)  \\
&= \gamma(\pi \otimes \chi) \ell_{\overline{\chi}} \left(\int_{u \in \OO} \psi(u\tau_z) \pi(\exp(T^{-1}ue_{+}))\pi(w)W_v\right).
\end{align*}

We should think of the integral inside as doing approximately the same thing as $\Op^+$, but on the vector $\pi(w)W_v$ instead of $W_v$.  However, the resulting dichotomy will depend on whether $\tau_z \in \OO$ rather than $\tau_y$. 

Explicitly,  
\begin{align*}
\ell_{\overline{\chi}} &\left(\int_{u \in \OO} \psi(u\tau_z) \pi(\exp(T^{-1}ue_{+}))\pi(w)W_v\right) \\&= \int_{h \in H}\int_{u \in \OO} \psi(u(\tau_z + T^{-1}h))\,du (\pi(w)W_v)(h)\chi(h)\,d^\times h.
\end{align*}
Therefore for $\tau_z \in \OO$, 
\begin{align*}
\ell_\chi(\Op^-(a_+)v) = \gamma(\pi \otimes \chi) \ell_{\overline{\chi}}\left((\pi(w)v)1_{v(\cdot) \geq -N}\right).	
\end{align*}

When $\tau_z \not\in\O$, the resulting analysis is quite complicated. Therefore, we start with the $\tau_z \in \O$ case, which conveys the central ideas. Since there is a restriction to the support of $\pi(w)v$, we cannot straightforwardly apply the local functional equation again. We need to understand the action of the Weyl element explicitly. To that end, we switch to the vector viewpoint. Namely, 
\begin{align}
\ell_\chi\left(\Op^-(a_+)W_v\right) &= \gamma(\pi \otimes \chi)\ell_{\overline{\chi}}\left(\pi(w)(W_v1_{v(\cdot) \geq -N})\right) \\
&= \gamma(\pi \otimes \chi)\lim_{R \to \infty} \langle \pi(w)(W_v 1_{v(\cdot) \geq -N}), v_\chi^R\rangle \\
&= \gamma(\pi \otimes \chi) \lim_{R \to \infty} \langle W_v, \pi(w)(v_\chi^R 1_{v(\cdot) \geq -N})\rangle. \label{eqn:innerproduct}
\end{align}

In order to complete this calculation, we close this section with a general lemma which encapsulates how the local functional equation can be used to compute the action of $\pi(w)$ on vectors in the Kirillov model. 
\begin{lemma}\label{lemma:weylkirillov}
    Let $[n, \omega] \in \pi$ be given in the Kirillov model by $1_{\varpi^n \OO^\times} \omega$. Then 
    \begin{equation*}
        \pi(w)[n, \omega] = \omega^{-1}(y)\frac{\log q}{2\pi i}\int_{|z| = 1} \gamma(\pi \otimes \omega^{-1}|\cdot|^s)z^{v(y) + n}\frac{\,dz}{z},
    \end{equation*}
    where $z = q^{-s}$. 
\end{lemma}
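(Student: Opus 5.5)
The plan is to compute $\pi(w)[n,\omega]$ through the Mellin transform in the Kirillov model. We use the Mellin inversion form of the local functional equation recorded at the end of the subsection on $\GL_2$ Hecke--Jacquet--Langlands theory, with trivial central character $\omega_\pi$:
\begin{align*}
\pi(w)W(a(y)) = \int_{\widehat{F^\times}} \gamma(1/2,\pi\otimes\eta)\,\eta(y)\int_{F^\times} W(a(t))\,\eta(t)\,d^\times t\,d\eta .
\end{align*}
We apply this with $W=[n,\omega]=1_{\varpi^n\OO^\times}\omega$. The dual group $\widehat{F^\times}$ decomposes as a disjoint union over unitary characters $\eta_0$ of $\OO^\times$, each extended to $F^\times$ by fixing $\eta_0(\varpi)=1$, of the circles of unramified twists $\eta_0|\cdot|^s$, with $s\in i\R/(2\pi i/\log q)\Z$. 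The Plancherel measure $d\eta$ is counting measure on the $\eta_0$ times $\frac{\log q}{2\pi i}\,ds$ on each circle. This normalization matches $\mathrm{vol}(\OO^\times)=1$, and we will double check it against the normalization of $d\chi$ in \cite{MV}.

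The key step is to compute the inner Mellin transform. For $\eta=\eta_0|\cdot|^s$,
\begin{align*}
\int_{F^\times}1_{\varpi^n\OO^\times}(t)\,\omega(t)\,\eta(t)\,d^\times t=(\omega\eta)(\varpi^n)\int_{\OO^\times}\omega\eta_0(u)\,d^\times u .
\end{align*}
By orthogonality of characters on the compact group $\OO^\times$, the integral on the right vanishes unless $\eta_0=\omega^{-1}|_{\OO^\times}$, in which case it equals $1$. This collapses the sum over $\eta_0$ to the single component $\eta=\omega^{-1}|\cdot|^s$, up to normalizing $\omega(\varpi)$ into the unramified part. The surviving factor is then $(\omega\eta)(\varpi^n)=|\varpi|^{ns}=q^{-ns}=z^n$.

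Substituting back, the outer character contributes $\eta(y)=\omega^{-1}(y)|y|^s=\omega^{-1}(y)\,q^{-s\,v(y)}=\omega^{-1}(y)\,z^{v(y)}$. The integrand on the circle therefore becomes $\gamma(\pi\otimes\omega^{-1}|\cdot|^s)\,\omega^{-1}(y)\,z^{v(y)+n}$. Finally we change variables $z=q^{-s}$, so that $dz/z=-\log q\,ds$, and the circle in $s$ becomes $|z|=1$, traversed with the appropriate orientation. This turns $\frac{\log q}{2\pi i}\,ds$ into the contour integral $\frac{\log q}{2\pi i}\oint_{|z|=1}\cdots\frac{dz}{z}$ claimed in the statement, up to the constant normalization of $d\eta$.

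The main obstacle is bookkeeping of normalizations. These include the orientation and sign from $dz/z=-\log q\,ds$, the exact Plancherel constant on the unramified circle, and the convention for extending $\omega^{-1}$ from $\OO^\times$ to $F^\times$ (the value $\omega(\varpi)$ can be absorbed into a shift of $s$). I would also justify the interchange of the Mellin inversion with evaluation. Since $[n,\omega]\in C_c^\infty(F^\times)$ lies in the Kirillov model and its Mellin transform is supported on a single circle, the inversion is a finite contour integral of a rational function of $z$ times a monomial. Absolute convergence is therefore clear, and the $\gamma$-factor is entire in $z$ on $|z|=1$ for unitary $\pi$.
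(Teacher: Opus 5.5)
Your proposal is the paper's proof: you apply the Mellin-inversion form of the local functional equation to $1_{\varpi^n\OO^\times}\omega$, use orthogonality on $\OO^\times$ to collapse to the single circle $\nu=\omega^{-1}|\cdot|^s$, and substitute $z=q^{-s}$. Your absorption of $\omega(\varpi)$ into a shift of $s$ is exactly the paper's step of writing $\nu=\omega^{-1}\cdot\nu\omega$ with $\nu\omega$ unramified.

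On the constant you flagged: since $dz/z=-\log q\,ds$, your (correct) measure $\frac{\log q}{2\pi i}\,ds$ becomes $\frac{1}{2\pi i}\oint_{|z|=1}\frac{dz}{z}$ taken counterclockwise, which is the probability measure on the circle, so your change of variables does not produce the $\log q$. The extra $\log q$ in the displayed statement is a normalization slip in the statement itself. The paper's later uses of the lemma evaluate $\frac{\log q}{2\pi i}\int_{|z|=1}z^{k}\frac{dz}{z}$ as $1_{k=0}$, which holds only with the probability normalization.
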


\begin{proof}
    We proceed by Mellin inversion. We can write 
    \begin{align}
        (\pi(w)[n, \omega])(y) &= \int_{\widehat{F^\times}} \gamma(\pi \otimes \nu) \nu(y)\int_{F^\times} [n, \omega](h)\nu(h)\,d^\times h\,d\nu \\
        &= \int_{\widehat{F^\times}} \gamma(\pi \otimes \nu)\nu(y)\int_{F^\times} 1_{\varpi^n \OO^\times}(h)\omega(h)\nu(h)\,d^\times h \,d\nu \\
        &= \int_{\widehat{F^\times}} \gamma(\pi \otimes \nu)\nu(y)\omega\nu(\varpi^n)\int_{u \in \OO^\times} \omega(u)\nu(u)\,d^\times u \,d\nu.
    \end{align}
    The second integral detects the condition that $\omega = \nu^{-1}$ on $\OO^\times$, or in other words, $\omega\nu$ is unramified. Therefore, we can write $\omega\nu = |\cdot|^s$ for some $s \in \C$. If $x = \varpi^k u$ with $u \in \OO^\times$, then $|x|^s = q^{-ks}$. Writing $z = q^{-s}$, we can therefore reparametrize the above integral as follows: 
    \begin{align*}
        (\pi(w)[n, \omega])(y) &= \omega^{-1}(y)\frac{\log q}{2\pi i} \int_{|z| = 1} \gamma(\pi \otimes \nu)  z^{v(y) + n}\frac{\,dz}{z}.
    \end{align*}
    Writing $\nu$ as $\omega^{-1}\nu\omega$ and using that $\nu\omega$ is unramified gives the result. 
\end{proof}

The next two sections will be spent computing $\pi(w)(v_\chi^R1_{v(\cdot) \geq -N})$ in the Kirillov model by specializing Lemma \ref{lemma:weylkirillov} to the case where $\pi$ is supercuspidal or principal series, and then applying Cauchy integral formula. 

\subsection{Supercuspidal Case}
Recall we would like to calculate Equation \ref{eqn:innerproduct}, i.e. the right hand side of
\begin{equation*}
   \ell_\chi\left(\Op^-(a_+)W_v\right)  = \gamma(\pi \otimes \chi) \lim_{R \to \infty} \langle W_v, \pi(w)(v_\chi^R 1_{v(\cdot) \geq -N})\rangle. 
\end{equation*}
To calculate Equation \ref{eqn:innerproduct} we need to understand $\pi(w)(v_\chi^R1_{v(\cdot) \geq -N})$ in the Kirillov model. To do that, we will first calculate $\pi(w)(v_\chi^R1_{\varpi^m\OO^\times})$ and sum the result over a range. We now need to use that we are working with a specific representation in order to pin down the gamma factor, and will carry out the resulting analysis separately for the supercuspidal case and the principal series case.  Recall that under the Local Langlands Correspondence there is a representation $\rho = \Ind_{E/F}\xi := \Ind_{\mathcal{W}_E}^{\mathcal{W}_F} \xi $ corresponding to $\pi$. This correspondence is compatible with gamma and epsilon factors. For any character $\eta$ of $F^\times$ or $\psi$ of $F$, let $\eta_E := \eta \circ \mathrm{Nm}_{E/F}$ and $\psi_E := \psi \circ \mathrm{Tr}_{E/F}$. Let $f := f(E|F)$ be the inertial degree. 

\begin{prop}\label{prop:weylonshell}
In the Kirillov model, for $R$ sufficiently large we have
\begin{align}
\pi(w)(v_\chi^R1_{\varpi^m\OO^\times})(y) = \epsilon(1/2, \xi \chi_E^{-1}, \psi_E) v^R_{\chi^{-1}}(y) 1_{v(y) = -m-fc(\xi\chi_E^{-1})}.
\end{align}

\end{prop}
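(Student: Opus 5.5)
We will reduce to Lemma \ref{lemma:weylkirillov} and then evaluate a single contour integral by the residue theorem.

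\textbf{Step 1: rewrite the input as an elementary vector.} For $R \geq |m|$ the truncation in $v_\chi^R$ is inactive on $\varpi^m\OO^\times$. Hence
\begin{equation*}
v_\chi^R 1_{\varpi^m\OO^\times} = \chi\, 1_{\varpi^m\OO^\times} = [m,\chi].
\end{equation*}
Lemma \ref{lemma:weylkirillov} with $\omega = \chi$ and $n = m$ then gives
\begin{equation*}
\pi(w)(v_\chi^R1_{\varpi^m\OO^\times})(y) = \chi^{-1}(y)\frac{\log q}{2\pi i}\int_{|z|=1}\gamma(\pi\otimes\chi^{-1}|\cdot|^s)\, z^{v(y)+m}\frac{dz}{z}, \qquad z = q^{-s}.
\end{equation*}
Any normalization constant such as $\chi(\varpi^m)$ coming from how $[m,\chi]$ is written will be tracked explicitly and absorbed here.

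\textbf{Step 2: compute the gamma factor as a monomial in $z$.} Since $\pi$ is supercuspidal, every twist $\pi\otimes\nu$ has trivial $L$-factors. So $\gamma = \epsilon$, and by the compatibility of the LLC with epsilon factors recalled in the preliminaries,
\begin{equation*}
\gamma(\pi\otimes\chi^{-1}|\cdot|^s) = \epsilon(1/2,\ \xi\chi_E^{-1}|\mathrm{Nm}(\cdot)|^s,\ \psi_E).
\end{equation*}
Now $|\mathrm{Nm}(\cdot)|^s$ is an unramified character of $E^\times$, equal to $q_E^{-s\,v_E}$ with $q_E = q^f$. The twisting rule $\epsilon(s,\chi|\cdot|^a) = q^{-ac(\chi)}\epsilon(s,\chi)$, applied over $E$, therefore gives
\begin{equation*}
\gamma = \epsilon(1/2,\xi\chi_E^{-1},\psi_E)\, q^{-fs\,c(\xi\chi_E^{-1})} = \epsilon(1/2,\xi\chi_E^{-1},\psi_E)\, z^{f c(\xi\chi_E^{-1})}.
\end{equation*}
The constant is unitary by the corollary on $|\epsilon|$. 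The key structural point is that $\gamma$ is a single monomial in $z$, with no poles, precisely because of supercuspidality.

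\textbf{Step 3: evaluate the contour integral.} The integrand becomes a constant times $z^{v(y)+m+fc(\xi\chi_E^{-1})}\,dz/z$. By the Cauchy integral formula, $\frac{1}{2\pi i}\oint z^k\,dz/z = \delta_{k,0}$, the measure normalization $\log q$ cancelling the length of the $s$-period. So the result is supported on $v(y) = -m - fc(\xi\chi_E^{-1})$, where it equals $\epsilon(1/2,\xi\chi_E^{-1},\psi_E)\,\chi^{-1}(y)$. Taking $R \geq |m| + fc(\xi\chi_E^{-1})$, we have $\chi^{-1}(y) = v_{\chi^{-1}}^R(y)$ on that shell, which is the claimed formula.

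\textbf{Main obstacle.} The delicate step is Step 2: pinning down the exact exponent of $z$. If $E/F$ is ramified, $\psi_E$ has nonzero conductor (the different of $E/F$), and the restriction to $F$ of the unramified twist must be matched with $v_E$ through $e$ and $f$. Either of these could shift the support shell, or introduce a further constant, relative to $fc(\xi\chi_E^{-1})$. I would first check this against the paper's convention $c(\pi\otimes\overline\chi) = fc(\xi\chi_E^{-1})$, where the shell should sit at $-m - c(\pi\otimes\overline\chi)$. I would then verify the unramified and ramified $E$ cases separately, adjusting by the additive-character conductor shift if necessary.
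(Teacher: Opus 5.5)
Your proposal is the paper's proof. Both replace $v_\chi^R 1_{\varpi^m\OO^\times}$ by $\chi 1_{\varpi^m\OO^\times}$, apply Lemma~\ref{lemma:weylkirillov}, use supercuspidality and the LLC compatibility to make $\gamma$ the monomial $\epsilon(1/2,\xi\chi_E^{-1},\psi_E)\,z^{fc(\xi\chi_E^{-1})}$, and read off the shell with Cauchy's formula. The obstacle you flag is real and the paper ignores it too: when $E/F$ is ramified, $\psi_E$ is trivial on the inverse different $\p_E^{-1}$, so the exponent is $c(\xi\chi_E^{-1})+1$ (the true conductor $c(\pi\otimes\barchi)$), and the stated shell $-m-fc(\xi\chi_E^{-1})$ is correct only for unramified $E/F$.
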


\begin{cor}
Then 
\begin{align}
\pi(w)(v_\chi^R1_{v(\cdot) \geq -N}) = \epsilon(1/2, \xi \chi_E^{-1}, \psi_E) v^R_{\chi^{-1}} 1_{v(\cdot) \leq N-fc(\xi\chi_E^{-1})}
\end{align}
	
\end{cor}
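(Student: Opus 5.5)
The corollary should follow by summing Proposition \ref{prop:weylonshell} over shells, using linearity of $\pi(w)$. First I decompose the truncated vector in the Kirillov model as
\begin{align*}
v_\chi^R 1_{v(\cdot) \geq -N} = \sum_{m = -N}^{R} v_\chi^R 1_{\varpi^m \OO^\times}.
\end{align*}
The sum is finite because $v_\chi^R$ is supported on $|v(h)| \leq R$. So the left-hand side is a finite linear combination of vectors of the form treated in Proposition \ref{prop:weylonshell}, and no convergence issue arises in applying $\pi(w)$ termwise.

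Next I apply Proposition \ref{prop:weylonshell} to each shell, taking $R$ large enough for the proposition to hold. The shell at $m$ maps to
\begin{align*}
\epsilon(1/2, \xi\chi_E^{-1}, \psi_E)\, v^R_{\chi^{-1}}(y)\, 1_{v(y) = -m - fc(\xi\chi_E^{-1})}.
\end{align*}
The prefactor does not depend on $m$, so it pulls out of the sum. As $m$ ranges over $-N \leq m \leq R$, the images are supported on the disjoint shells $v(y) = -m - fc$. Their union is the range $-R - fc \leq v(y) \leq N - fc$, where I abbreviate $c = c(\xi\chi_E^{-1})$.

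The lower endpoint is harmless. The factor $v^R_{\chi^{-1}}$ already imposes $v(y) \geq -R$, so after intersecting, the constraint $v(y) \geq -R - fc$ is automatic (since $fc \geq 0$). What remains is exactly $v^R_{\chi^{-1}} 1_{v(\cdot) \leq N - fc}$, which is the claimed formula.

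There is one edge effect to address. For $m$ near $R$, the output shells are those with $v(y) < -R$. The proposition as stated already multiplies by $v^R_{\chi^{-1}}$, so those terms vanish identically and are consistent with the formula. The only real content is therefore Proposition \ref{prop:weylonshell} itself.

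If I instead had to prove that proposition, the key step would be evaluating Lemma \ref{lemma:weylkirillov} with $\omega = \chi$. The inputs are $\gamma(\pi\otimes\chi^{-1}|\cdot|^s) = \gamma(\xi\chi_E^{-1}|\cdot|_E^s)$, and the fact that for a supercuspidal $\pi$ there are no $L$-factors. The $\gamma$-factor is then the monomial $\epsilon(1/2,\xi\chi_E^{-1},\psi_E)\, z^{-fc}$, by the shift rule $\epsilon(s,\chi|\cdot|^a) = q^{-ac(\chi)}\epsilon(s,\chi)$. The contour integral $\oint z^{v(y)+m-fc}\,dz/z$ then picks out the single shell $v(y) = -m + fc$, up to the sign convention on $z$. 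That sign convention, together with the $\chi$ versus $\chi^{-1}$ bookkeeping, is the step I expect to need the most care.
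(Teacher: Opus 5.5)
Your route is the paper's: sum Proposition \ref{prop:weylonshell} over the shells $m\ge -N$ and pull out the $m$-independent factor $\epsilon(1/2,\xi\chi_E^{-1},\psi_E)$. However, your edge-effect paragraph is wrong. Write $c=c(\xi\chi_E^{-1})$ as you do. The hypothesis ``$R$ sufficiently large'' in that proposition depends on $m$. It is exactly what makes $v^R_{\chi^{-1}}1_{v(\cdot)=-m-fc}$ agree with the true image $\epsilon\,\chi^{-1}1_{v(\cdot)=-m-fc}$ produced by Lemma \ref{lemma:weylkirillov}, so it requires $m+fc\le R$. For $R-fc<m\le R$ you may not invoke it, and those terms do not vanish: they contribute $\epsilon\,\chi^{-1}$ on the shells $-R-fc\le v(y)<-R$. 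So at fixed $R\ge N$ the shell-by-shell computation actually gives $\pi(w)(v_\chi^R1_{v(\cdot)\ge -N})=\epsilon\,\chi^{-1}1_{-R-fc\le v(\cdot)\le N-fc}$. The displayed identity is therefore false as an exact equality whenever $fc>0$. Unitarity of $\pi(w)$ confirms this: each shell $\varpi^k\OO^\times$ has measure $1$, so the left side has squared norm $R+N+1$ while the right side has squared norm $R+N-fc+1$.

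The repair is to read the corollary modulo a vector supported on $\{v(\cdot)<-R\}$, which is all the paper ever uses. The formula enters through $\lim_{R\to\infty}\langle W_v,\cdot\rangle$ with $W_v\in C_c^\infty(F^\times)$ (the Kirillov model of a supercuspidal), where the edge term pairs to zero once $R$ exceeds the support of $W_v$. It also enters through pairings against $v^R_{\chi^{-1}}1_{v(\cdot)\ge -N}$, which are disjointly supported from the edge term for $R\ge N$. The printed corollary carries the same imprecision, so this is a matter of stating the claim correctly rather than a missing idea, but you should not assert that the edge terms vanish identically.

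Separately, the sign in your sketch of the proposition is not a matter of convention. With $z=q^{-s}$ and $q_E=q^f$, the shift rule gives $\epsilon(1/2,\xi\chi_E^{-1}|\cdot|_E^{s},\psi_E)=q_E^{-sc}\,\epsilon(1/2,\xi\chi_E^{-1},\psi_E)=z^{fc}\,\epsilon(1/2,\xi\chi_E^{-1},\psi_E)$. So the integrand is $z^{v(y)+m+fc}$ and the shell is $v(y)=-m-fc$. Your $z^{-fc}$ would put the truncation at $N+fc$, contradicting the statement you are proving.
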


\begin{proof}[Proof of Proposition \ref{prop:weylonshell}]
	First, we note that for $R$ sufficiently large,  $v_\chi^R(h)1_{\varpi^m\OO^\times}(h) = \chi(h)1_{\varpi^m \OO^\times}$. Therefore, by Lemma \ref{lemma:weylkirillov} 
\begin{align*}
\pi(w)(v_\chi^R(y)1_{v(y) = m}) = \frac{\log q}{2\pi i }\int_{|z| = 1} \gamma(\pi \otimes \nu)\overline{\chi(y)}z^{v(y) + m} \frac{\,dz}{z}.
\end{align*}
Since $\pi$ is supercuspidal, the $L$-factors are equal to $1$, so $\gamma(\pi \otimes \nu) = \epsilon(\pi \otimes \nu)$. Then 
\begin{align*}
\epsilon(1/2, \pi \otimes \nu, \psi) = \epsilon(1/2, \xi \nu_E, \psi_E).
\end{align*}
Since $\chi\nu$ is unramified, $\chi_E\nu_E$ is unramified, and for $x = \varpi^k u \in F^\times$, $\chi_E\nu_E(x) = z^{fk}$,  where $f$ is the inertial degree $f(E|F)$. So
\begin{align*}
\epsilon(1/2, \xi\nu_E, \psi_E) = \epsilon(1/2, \xi\chi_E^{-1}\chi_E\nu_E, \psi_E) = \epsilon(1/2, \xi\chi_E^{-1}, \psi_E)z^{fc(\xi\chi_E^{-1})}.	
\end{align*}

Therefore, 
\begin{align*}
\pi(w)(v_\chi^R(y)1_{v(y) = m}) &= \epsilon(1/2, \xi\chi_E^{-1}, \psi_E)\frac{\log q}{2\pi i} \int_{|z|=1} z^{fc(\xi\chi_E^{-1}+m+v(y)}\frac{\,dz}{z} \\
&= \epsilon(1/2, \xi\chi_E^{-1}, \psi_E)1_{v(y) = -m-fc(\xi\chi_E^{-1})}.	
\end{align*}
\end{proof}

When $\tau_z \in \OO$, we can write 
\begin{align*}
\ell_\chi(\Op^-(a_+)W_v) &= \gamma(\pi \otimes \chi)\lim_{R \to \infty} \langle W_v, \pi(w)(v_\chi^R 1_{v(\cdot) \geq -N}) \rangle \\	
&= \epsilon(1/2, \xi\chi_E, \psi_E) \lim_{R \to \infty}\langle W_v, \epsilon(1/2, \xi \chi_E^{-1}, \psi_E) v^R_{\chi^{-1}}(y) 1_{v(y) \leq N-fc(\xi\chi_E^{-1})}\rangle \\
&= \epsilon(\xi\chi_E)\overline{\epsilon(\xi\chi_E^{-1})} \ell_{\overline{\chi}}\left(W_v 1_{v(\cdot) \leq N - fc\left(\xi\chi_E^{-1}\right)}\right).
\end{align*}

This completes the case where $\tau_z \in \OO$. The case where $\tau_z \not\in \OO$ is not conceptually more difficult, only notationally cumbersome. Recall that in this case we are calculating 
\begin{align*}
\ell_\chi(\Op^-(a_+)W_v) = \gamma(\pi \otimes \chi)\lim_{R \to \infty} \langle W_v, \pi(w)(v_\chi^R 1_{v(\cdot) = -N - s} \E_{\omega \in \mathcal{X}_s} \omega^{-1}(\varpi^s \tau_z)\omega(\varpi^{N+s})\omega) \rangle
\end{align*}
for $\tau_z \in \varpi^{-s}\OO^\times$ with $s$ positive. To do this, we again use  Mellin inversion. 

\begin{prop}\label{prop:nonintegerweyl}
	In the Kirillov model, for $R$ sufficiently large, we have
	\begin{align}
	 \E_{\omega \in \mathcal{X}_s} \pi(w)(v_\chi^R 1_{v(\cdot) = -N - s}\omega) \omega^{-1}(\varpi^s \tau_z)\omega(\varpi^{N+s})
	\end{align}
is equal to 
\begin{align}
v^R_{\chi^{-1}}1_{v(\cdot) = N+s - fc(\xi\chi_E^{-1}\omega_E^{-1})}\E_{\omega \in \mathcal{X}_s} \epsilon(\xi\chi_E^{-1}\omega_E^{-1})\omega^{-1}(\tau_z) \omega(\varpi^N)\omega^{-1} . 	
\end{align}

\end{prop}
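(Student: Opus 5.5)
By linearity of $\pi(w)$ and of the average $\E_{\omega \in \mathcal{X}_s}$, it suffices to compute, for each fixed $\omega \in \mathcal{X}_s$, the single vector $\pi(w)(v_\chi^R 1_{v(\cdot) = -N-s}\,\omega)$. We then multiply by the scalar $\omega^{-1}(\varpi^s\tau_z)\omega(\varpi^{N+s})$ and average.

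For $R$ sufficiently large (namely $R \geq N+s$), the vector $v_\chi^R 1_{v(\cdot)=-N-s}\,\omega$ is exactly $[n, \chi\omega]$ with $n = -N-s$, in the notation of Lemma \ref{lemma:weylkirillov}. That lemma gives
\begin{align*}
\pi(w)[n,\chi\omega](y) = (\chi\omega)^{-1}(y)\frac{\log q}{2\pi i}\int_{|z|=1}\gamma(\pi\otimes \chi^{-1}\omega^{-1}|\cdot|^{s'})\,z^{v(y)+n}\frac{dz}{z},
\end{align*}
where $z = q^{-s'}$; here $s'$ is a fresh Mellin variable, distinct from the integer $s$.

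Next, exactly as in the proof of Proposition \ref{prop:weylonshell}, we use that $\pi$ is supercuspidal. This gives $\gamma = \epsilon(1/2, \xi(\chi\omega)_E^{-1}|\cdot|_E^{s'}, \psi_E)$. The unramified-twist rule $\epsilon(s,\chi|\cdot|^a) = q^{-ac(\chi)}\epsilon(s,\chi)$, applied over $E$, converts this into
\begin{align*}
\epsilon(1/2,\xi\chi_E^{-1}\omega_E^{-1},\psi_E)\, z^{f c(\xi\chi_E^{-1}\omega_E^{-1})}.
\end{align*}
The contour integral then picks out the single valuation $v(y) = N+s - fc(\xi\chi_E^{-1}\omega_E^{-1})$. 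We get
\begin{align*}
\pi(w)(v_\chi^R1_{v(\cdot)=-N-s}\,\omega) = \epsilon(\xi\chi_E^{-1}\omega_E^{-1})\,\chi^{-1}\omega^{-1}\,1_{v(\cdot)=N+s-fc(\xi\chi_E^{-1}\omega_E^{-1})}.
\end{align*}
For $R$ large this equals $\epsilon(\xi\chi_E^{-1}\omega_E^{-1})\,v^R_{\chi^{-1}}\,\omega^{-1}\,1_{v(\cdot)=N+s-fc(\cdots)}$.

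It remains to collect the scalars. We have $\omega^{-1}(\varpi^s\tau_z)\omega(\varpi^{N+s}) = \omega^{-1}(\tau_z)\omega(\varpi^N)$, since the factors $\omega(\varpi^s)$ cancel. Inserting this and averaging over $\omega$ gives the stated formula, with the indicator written inside the average because its support may depend on $\omega$.

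The only delicate point is this $\omega$-dependence of the conductor $c(\xi\chi_E^{-1}\omega_E^{-1})$, which shifts the support. The computation above is uniform in $\omega$, so the identity holds termwise and no cancellation across $\omega$ is needed. Simplifying further would use the hypothesis $|\tau_z| \le q^{c(\pi\otimes\barchi)/2}$, so that $s \le c/2$. Proposition \ref{prop:tate}, applied over $E$, would then give $c(\xi\chi_E^{-1}\omega_E^{-1}) = c(\xi\chi_E^{-1})$ and $\epsilon(\xi\chi_E^{-1}\omega_E^{-1}) = \omega_E^{-1}(\alpha)\,\epsilon(\xi\chi_E^{-1})$. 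That step is not needed for the proposition as stated; I would also double-check the normalization of $\epsilon$ under unramified twists over $E$, where the inertial degree $f$ enters.
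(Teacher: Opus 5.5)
Your proposal is correct and follows the paper's proof essentially step for step: apply Lemma~\ref{lemma:weylkirillov} to $[-N-s,\chi\omega]$, use $\gamma=\epsilon$ for supercuspidal $\pi$ together with the unramified-twist rule over $E$ to extract the factor $z^{fc(\xi\chi_E^{-1}\omega_E^{-1})}$, let the contour integral isolate $v(y)=N+s-fc(\xi\chi_E^{-1}\omega_E^{-1})$, and cancel $\omega(\varpi^s)$ in the scalar. Your point that the indicator belongs inside the $\omega$-average is also exactly what the paper's computation produces, even though the displayed statement writes it outside.
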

\begin{proof}
	As in the first proof, we first remark that for $R$ large, $v_\chi^R(h)1_{\varpi^m \OO^\times}(h) = \chi(h)1_{\varpi^m\OO^\times}$ for any fixed $m$.   
By Lemma \ref{lemma:weylkirillov}, we can write this all as 
\begin{align*}
	\E_{\omega \in \mathcal{X}_s} \omega^{-1}(\tau_z) \omega(\varpi^N) \frac{\log q}{2\pi i }\int_{|z|=1} \gamma(\pi \otimes \nu) \chi^{-1}(y)\omega^{-1}(y)z^{v(y) -N - s}\frac{\,dz}{z}.
\end{align*}

As before, the gamma factor can be simplified to 
\begin{align*}
\gamma(\pi \otimes \nu) &= \epsilon(1/2, \xi\nu_E, \psi_E) = \epsilon(1/2, \xi\chi_E^{-1}\omega_E^{-1}\chi_E\omega_E\nu_E, \psi_E) \\
&= \epsilon(1/2, \xi\chi_{E}^{-1}\omega_E^{-1}, \psi_E)z^{fc(\xi\chi_E^{-1}\omega_{E}^{-1})}
\end{align*}
where $f := f(E|F)$ is the inertial degree. So our expression becomes
\begin{align*}
	&\E_{\omega \in \mathcal{X}_s} \omega^{-1}(\tau_z) \omega(\varpi^N) \frac{\log q}{2\pi i }\int_{|z|=1}\epsilon(\xi\chi_E^{-1}\omega_E^{-1}) \chi^{-1}(y)\omega^{-1}(y)z^{v(y) -N - s+fc(\xi\chi_E^{-1}\omega_E^{-1})}\frac{\,dz}{z} \\
	&= \E_{\omega \in \mathcal{X}_s} \epsilon(\xi\chi_E^{-1}\omega_E^{-1}) \omega^{-1}(\tau_z) \omega(\varpi^N) \chi^{-1}(y)\omega^{-1}(y) 1_{v(y) = N+s - fc(\xi\chi_E^{-1}\omega_E^{-1})}.
\end{align*}
For $R$ large, $v^R_{\chi^{-1}} 1_{v(y) = N+s - fc(\xi\chi_E^{-1}\omega_E^{-1})}$ agrees with $\chi^{-1}(y)1_{v(y) = N+s - fc(\xi\chi_E^{-1}\omega_E^{-1})}$, which yields the result. 
\end{proof}

\subsection{Principal Series Case}
As a reminder, in analogy to our work in the supercuspidal case our aim is to compute $\pi(w)(v_\chi^R 1_{v(\cdot) \geq -N})$ in the Kirillov model. To do this, we will compute $\pi(w)(v_\chi^R 1_{v(\cdot) = m})$ and then sum the result over $m \geq -N$. Recall that $\pi = \chi_0 \boxplus \chi_0^{-1}$, where $\chi_0$ is some ramified character of $F^\times$. We will assume that neither $\chi_0\chi^{-1}$ nor $\chi_0^{-1}\chi^{-1}$ is unramified, in which case we say $(\chi_0, \chi^{-1})$ is a generic pair. We will denote the characters $\chi_0\chi^{-1}$ and $\chi_0^{-1}\chi^{-1}$ by $\chi_\flat$ and $\chi_\sharp$ with conductors $f_\flat$ and $f_\sharp$. 

\begin{prop}\label{prop:principal series}
    Let $(\chi_0, \chi^{-1})$ be a generic pair. Then in the Kirillov model for $R$ sufficiently large we have, 
    \begin{equation*}
    \pi(w)(v_\chi^R(y)(y)1_{v(y) = m}) = \epsilon(1/2, \chi_\flat)\epsilon(1/2, \chi_\sharp) v^R_{\chi^{-1}}(y)1_{v(y) = -m - f_\sharp - f_\flat}.
\end{equation*}
\end{prop}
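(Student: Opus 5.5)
The plan is to mirror the proof of Proposition \ref{prop:weylonshell}, replacing the Langlands parameter $\xi$ by the pair $(\chi_0,\chi_0^{-1})$. First, for $R$ sufficiently large (specifically $R \geq |m|$), the truncation in $v_\chi^R$ is invisible on the shell $\varpi^m\OO^\times$. Hence $v_\chi^R 1_{v(\cdot)=m} = \chi 1_{\varpi^m\OO^\times} = [m,\chi]$ in the notation of Lemma \ref{lemma:weylkirillov}. Applying that lemma with $\omega = \chi$ gives
\begin{align*}
\pi(w)[m,\chi](y) = \chi^{-1}(y)\frac{\log q}{2\pi i}\int_{|z|=1}\gamma(\pi\otimes\chi^{-1}|\cdot|^s)\,z^{v(y)+m}\frac{dz}{z}, \qquad z = q^{-s}.
\end{align*}
It then remains to identify the gamma factor as a monomial in $z$.

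For the principal series, $\gamma(\pi\otimes\nu) = \gamma(\chi_0\nu)\gamma(\chi_0^{-1}\nu)$. Taking $\nu = \chi^{-1}|\cdot|^s$, the two factors become $\gamma(\chi_\flat|\cdot|^s)$ and $\gamma(\chi_\sharp|\cdot|^s)$. This is where genericity is used. Since neither $\chi_\flat$ nor $\chi_\sharp$ is unramified, and twisting by $|\cdot|^s$ does not change this, every $L$-factor in both $\GL_1$ gamma factors equals $1$. So each gamma factor reduces to an epsilon factor, and no poles can appear inside the contour.

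Next I will use the earlier twisting rule $\epsilon(s,\chi|\cdot|^a) = q^{-ac(\chi)}\epsilon(s,\chi)$, which can also be read off from the explicit Gauss-sum formula in the Lemma before it. This gives
\begin{align*}
\epsilon(1/2,\chi_\flat|\cdot|^s) = z^{f_\flat}\,\epsilon(1/2,\chi_\flat), \qquad \epsilon(1/2,\chi_\sharp|\cdot|^s) = z^{f_\sharp}\,\epsilon(1/2,\chi_\sharp),
\end{align*}
with $\psi$ unramified throughout. Substituting, the integrand becomes
\begin{align*}
\epsilon(1/2,\chi_\flat)\,\epsilon(1/2,\chi_\sharp)\,z^{v(y)+m+f_\flat+f_\sharp-1},
\end{align*}
so the whole expression is a constant times a monomial in $z$.

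The Cauchy integral formula (orthogonality of $z^k$ on the unit circle, with the normalization $\frac{\log q}{2\pi i}\oint \frac{dz}{z}$ contributing $1$ as in the supercuspidal case) picks out exactly $v(y) = -m-f_\flat-f_\sharp$. Finally, enlarging $R$ so that this shell lies in $[-R,R]$, I replace $\chi^{-1}(y)1_{v(y)=-m-f_\flat-f_\sharp}$ by $v^R_{\chi^{-1}}(y)1_{v(y)=-m-f_\flat-f_\sharp}$.

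The main obstacle is bookkeeping of sign and normalization conventions. Three things must be tracked: whether Lemma \ref{lemma:weylkirillov} produces $\gamma(\pi\otimes\chi^{-1}|\cdot|^s)$ or $\gamma(\pi\otimes\chi|\cdot|^s)$; which of $\chi_0\chi^{\pm1}$ therefore appear; and whether the exponent of $z$ is $+c$ or $-c$. I would fix these by checking against the supercuspidal computation and the stated exponents $f_\flat$, $f_\sharp$. The genericity hypothesis only enters to rule out $L$-factor poles; without it the contour integral would produce geometric tails rather than a single shell.
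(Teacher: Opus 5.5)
Your proposal is correct and follows the paper's proof essentially step for step. You identify $v_\chi^R 1_{v(\cdot)=m}$ with $[m,\chi]$, apply Lemma \ref{lemma:weylkirillov}, and factor $\gamma(\pi\otimes\chi^{-1}|\cdot|^s)$ into the $\GL_1$ factors for $\chi_\flat|\cdot|^s$ and $\chi_\sharp|\cdot|^s$. You then kill every $L$-factor by genericity, pull $z^{f_\flat+f_\sharp}$ out of the epsilon factors, and read off the single shell $v(y)=-m-f_\flat-f_\sharp$ by Cauchy's formula.

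The only quibble is one you inherit from the lemma. Literally, $\frac{\log q}{2\pi i}\oint_{|z|=1}\frac{dz}{z}=\log q$, whereas the measure dual to $d^\times h$ on a circle of unramified twists is $\frac{1}{2\pi i}\frac{dz}{z}$. That second normalization is the one both you and the paper actually use when evaluating the contour integral as $1_{v(y)=-m-f_\flat-f_\sharp}$.
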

\begin{cor}
When $\tau_z \in \OO$, then we must have
$$\pi(w)(v_\chi^R(y) 1_{v(y) \geq -N}) = \epsilon(1/2, \chi_b)\epsilon(1/2, \chi_\sharp)v^R_{\chi^{-1}}(y) 1_{v(y) \leq N - f_\sharp - f_\flat}.$$
\end{cor}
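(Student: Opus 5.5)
We prove the corollary from Proposition~\ref{prop:principal series} by linearity, the same way the supercuspidal corollary follows from Proposition~\ref{prop:weylonshell}. The support condition $v(y)\ge -N$ decomposes the vector into shells:
\begin{align*}
v_\chi^R 1_{v(\cdot)\geq -N} = \sum_{m=-N}^{R} v_\chi^R 1_{v(\cdot)=m}.
\end{align*}
Because $v_\chi^R$ is supported on $|v(h)|\le R$, this is a finite sum of vectors in the Kirillov model. Since $\pi(w)$ is linear, we apply it to each shell separately.

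By Proposition~\ref{prop:principal series}, for $R$ large each shell contributes
\begin{align*}
\epsilon(1/2,\chi_\flat)\,\epsilon(1/2,\chi_\sharp)\, v^R_{\chi^{-1}}\, 1_{v(\cdot)=-m-f_\sharp-f_\flat}.
\end{align*}
The constant $\epsilon(1/2,\chi_\flat)\epsilon(1/2,\chi_\sharp)$ does not depend on $m$, so it factors out of the sum. As $m$ runs over $m\ge -N$, the index $k=-m-f_\sharp-f_\flat$ runs over $k\le N-f_\sharp-f_\flat$. Summing the indicators therefore gives
\begin{align*}
\sum_{k\leq N-f_\sharp-f_\flat} 1_{v(\cdot)=k} = 1_{v(\cdot)\leq N-f_\sharp-f_\flat},
\end{align*}
which is the claimed formula.

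The one step that needs care is that Proposition~\ref{prop:principal series} is stated for ``$R$ sufficiently large'' with $m$ fixed, while here $m$ ranges up to $R$. We handle this by reading the identity in the limit $R\to\infty$, which is all that is used in Equation~\ref{eqn:innerproduct}. Under the generic-pair hypothesis the gamma factor is a pure monomial in $z$; this is the content of the proposition, and it relies on the genericity assumption to kill the $L$-factors. Consequently the Cauchy-integral computation of Lemma~\ref{lemma:weylkirillov} gives an exact single-shell output for every $m$, and the truncation by $R$ only affects shells near $\pm R$. Those edge shells disappear as $R\to\infty$, and the identity holds there in the same sense as $\ell_\chi=\lim_R\langle\cdot,v_\chi^R\rangle$. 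With that, the corollary is purely formal.
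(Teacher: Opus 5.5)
Your proposal is correct and follows the paper's route: the corollary is obtained by applying Proposition \ref{prop:principal series} shell by shell and summing over $m \geq -N$. Your remark on the $R$-truncation is an accurate refinement that the paper passes over. Literally, the two sides differ by $\epsilon(1/2,\chi_\flat)\epsilon(1/2,\chi_\sharp)\chi^{-1}1_{-R-f_\flat-f_\sharp\le v(\cdot)<-R}$, which is supported at $|y|>q^{R}$. It therefore vanishes after pairing with any fixed Kirillov-model vector for $R$ large, and also after pairing with the vectors supported on $v(\cdot)\ge -N$ used later.
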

\begin{proof}[Proof of Proposition \ref{prop:principal series}]
First note that for $R$ sufficiently large, $$v_\chi^R(h)1_{\varpi^m \OO^\times}(h) = \chi(h)1_{\varpi^m\OO^\times}.$$ By Lemma \ref{lemma:weylkirillov},  we can write this as 
\begin{align*}
(\pi(w)v_\chi^R)(y)1_{v(y) = m} = \frac{\log q}{2\pi i }\int_{|z| = 1} \gamma(\pi \otimes \nu) \overline{\chi}(y)z^{v(y)+m}\frac{\,dz}{z}.	
\end{align*}

Next we use the factorization of the gamma factor as 
\begin{align*}
    \gamma(1/2, \pi \otimes \nu) &= \epsilon(1/2, \pi \otimes \nu)\frac{L(1/2, \tilde{\pi}\otimes \nu^{-1})}{L(1/2, \pi \otimes \nu)} \\ 
    &= \epsilon(1/2, \chi_0 \nu)\epsilon(1/2, \chi_0^{-1}\nu)\frac{L(1/2,\chi_0^{-1}\nu^{-1})L(1/2, \chi_0\nu^{-1})}{L(1/2, \chi_0\nu)L(1/2, \chi_0^{-1}\nu)}.
\end{align*}

Recall $\chi_\flat = \chi_0 \chi^{-1}$ and $\chi_\sharp = \chi_0^{-1}\chi^{-1}$ with conductor exponents equal to $f_\flat$ and $f_\sharp$ respectively. Then we can write the epsilon factors as as $\epsilon(1/2, \chi_\flat z^{\rm{val}}) = \epsilon(1/2, \chi_\flat)z^{f_\flat}$ and $\epsilon(1/2, \chi_\sharp z^{\rm{val}}) =\epsilon(1/2, \chi_\sharp)z^{f_\sharp}$.

When $(\chi_0, \chi)$ is a generic pair, the $L$-factors are all ramified and equal to $1$. So the gamma factor is 
\begin{equation}
    \gamma(1/2, \pi \otimes \nu) = \epsilon(1/2, \chi_\flat)\epsilon(1/2, \chi_\sharp)z^{f_\flat + f_\sharp}.
\end{equation}
Then we can calculate as follows: 
\begin{align}
    \pi(w)(v_\chi^R(y)1_{v(y) = m}) &= \epsilon(1/2, \chi_\flat)\epsilon(1/2, \chi_\sharp)\overline{\chi}(y) \frac{\log q}{2\pi i }\int_{|z| = 1} z^{f_\flat + f_\sharp + m + v(y)}\frac{\,dz}{z} \\
    &= \epsilon(1/2, \chi_\flat)\epsilon(1/2, \chi_\sharp) \overline{\chi}(y)1_{v(y) = -m - f_\sharp - f_\flat}.
\end{align}
\end{proof}

Therefore, when $\tau_z \in \OO$, we can write 
\begin{align*}
\ell_\chi(\Op^-(a_+)W_v) &= \gamma(\pi \otimes \chi)\lim_{R \to \infty} \langle W_v, \pi(w)(v_\chi^R1_{v(\cdot) \geq -N})\rangle \\
&= \gamma(\pi \otimes \chi)\overline{\epsilon(1/2, \chi_\flat)\epsilon(1/2, \chi_\sharp)}\ell_{\chi^{-1}}(W_v1_{v(\cdot) \leq N - f_\sharp - f_\flat}).
\end{align*}
This completes the case where $\tau_z \in \OO$. 
Next we need to understand what happens when $\tau_z \not \in \OO$. 
\begin{prop}
Let $\tau_z \in \varpi^{-s}\OO^\times$ be such that $s < \min(f_\sharp, f_\flat)$. Then in the Kirillov model, for $R$ sufficiently large, we have
\begin{align}
\pi(w)(v_\chi^R1_{v(\cdot) = -N -s} \mathbb{E}_{\omega \in \mathcal{X}_s} \omega^{-1}(\varpi^s \tau_z)\omega(\varpi^{N+s})\omega)
\end{align}
is equal to 
\begin{align}
	v^R_{\chi^{-1}}1_{v(\cdot) = N+s - f_\flat^\omega - f_\sharp^\omega}\mathbb{E}_{\omega \in \mathcal{X}_s} \omega^{-1}(\varpi^s \tau_{z})\omega^{-1}\epsilon_{f_\flat^\omega}\epsilon_{f_\sharp^\omega}.
\end{align}
\end{prop}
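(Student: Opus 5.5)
By linearity in $\omega$, it suffices to compute each summand $\pi(w)\bigl(v_\chi^R 1_{v(\cdot)=-N-s}\,\omega\bigr)$ separately. Each such summand is handled by Lemma \ref{lemma:weylkirillov}, exactly as in the proofs of Proposition \ref{prop:principal series} and Proposition \ref{prop:nonintegerweyl}.

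First, as before, for $R$ sufficiently large (depending on $N,s$) we have
$v_\chi^R 1_{\varpi^{-N-s}\OO^\times} = \chi\,1_{\varpi^{-N-s}\OO^\times}$.
Hence each summand equals $\pi(w)[-N-s,\chi\omega]$. Lemma \ref{lemma:weylkirillov} then gives
\begin{align*}
\pi(w)[-N-s,\chi\omega](y) = \chi^{-1}\omega^{-1}(y)\frac{\log q}{2\pi i}\int_{|z|=1}\gamma(\pi\otimes\nu)\,z^{v(y)-N-s}\frac{dz}{z},
\end{align*}
where $\nu=\chi^{-1}\omega^{-1}|\cdot|^{s'}$ and $z=q^{-s'}$. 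Since $\pi=\chi_0\boxplus\chi_0^{-1}$, the gamma factor factors as
$\gamma(\chi_0\nu)\gamma(\chi_0^{-1}\nu)
=\gamma(\chi_\flat\omega^{-1}|\cdot|^{s'})\,\gamma(\chi_\sharp\omega^{-1}|\cdot|^{s'})$.

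The key point, and the only place the hypothesis $s<\min(f_\sharp,f_\flat)$ enters, is that $\chi_\flat\omega^{-1}$ and $\chi_\sharp\omega^{-1}$ stay ramified. Indeed, $c(\omega)\le s<f_\flat$ forces $c(\chi_\flat\omega^{-1})=f_\flat$, and likewise for $\sharp$. In particular $f_\flat^\omega:=c(\chi_\flat\omega^{-1})$ and $f_\sharp^\omega$ are well defined and positive. Consequently all four $L$-factors equal $1$, and the gamma factor reduces to a product of epsilon factors. Writing
$\epsilon_{f_\flat^\omega}:=\epsilon(1/2,\chi_\flat\omega^{-1})$ and $\epsilon_{f_\sharp^\omega}:=\epsilon(1/2,\chi_\sharp\omega^{-1})$,
the unramified-twist rule $\epsilon(\chi|\cdot|^{a})=q^{-ac(\chi)}\epsilon(\chi)$ gives
$\gamma(\pi\otimes\nu)=\epsilon_{f_\flat^\omega}\epsilon_{f_\sharp^\omega}\,z^{f_\flat^\omega+f_\sharp^\omega}$.
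The contour integral $\frac{\log q}{2\pi i}\oint z^{k}\,dz/z$ is then $1_{k=0}$, so the summand equals
\begin{align*}
\epsilon_{f_\flat^\omega}\epsilon_{f_\sharp^\omega}\,\chi^{-1}(y)\,\omega^{-1}(y)\,1_{v(y)=N+s-f_\flat^\omega-f_\sharp^\omega}.
\end{align*}

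Finally, I reinsert the coefficients $\omega^{-1}(\varpi^s\tau_z)\omega(\varpi^{N+s})$ and average over $\omega\in\mathcal{X}_s$. For $R$ large, $\chi^{-1}1_{v(\cdot)=N+s-f_\flat^\omega-f_\sharp^\omega}$ agrees with $v_{\chi^{-1}}^R$ on this shell. Since $f^\omega_\flat,f^\omega_\sharp$ in fact do not depend on $\omega$, the support indicator can be pulled outside the expectation, which yields the stated formula. The factor $\omega(\varpi^{N+s})$ either is absorbed into the normalization of $\omega^{-1}$ evaluated on $y$, or combines with $\omega^{-1}(y)$ on the shell; I would track this bookkeeping carefully, matching the convention of Proposition \ref{prop:nonintegerweyl}.

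The main obstacle is the conductor-stability step: showing that twisting by $\omega$ of conductor at most $s$ neither unramifies $\chi_\flat$ or $\chi_\sharp$ nor changes their conductors. This is exactly what the assumption $s<\min(f_\sharp,f_\flat)$ is designed to guarantee. Without it, poles of the $L$-factors would enter the contour integral.
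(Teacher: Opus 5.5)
Your proposal is correct and follows the paper's proof essentially step for step. You apply Lemma \ref{lemma:weylkirillov} to each $\omega$-summand, factor $\gamma(\pi\otimes\nu)$ as $\epsilon(1/2,\chi_\flat\omega^{-1})\epsilon(1/2,\chi_\sharp\omega^{-1})z^{f_\flat^\omega+f_\sharp^\omega}$ with all $L$-factors equal to $1$ because $s<\min(f_\flat,f_\sharp)$, and evaluate the contour integral to pick out the shell. Your extra observation that $f_\flat^\omega=f_\flat$ and $f_\sharp^\omega=f_\sharp$ is what justifies pulling the indicator outside the expectation, which the paper's statement does without comment. The stray factor $\omega(\varpi^{N+s})$ disappears once you normalize $\omega(\varpi)=1$ for $\omega\in\mathcal{X}_s$; the paper tacitly does this by dropping the factor in the first line of its proof. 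It does not cancel against $\omega^{-1}(y)$ on the shell, so commit to that normalization rather than leaving both options open.
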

\begin{proof}
	Using Lemma \ref{lemma:weylkirillov}, we have
\begin{align*}
    &\pi(w)(v_\chi^R(y) \mathbb{E}_{c(\omega) \leq s} 1_{v(y) = -N-s}\overline{\omega}(\varpi^s\tau_{z})\omega(y)) \\
    &= \mathbb{E}_{c(\omega) \leq s} \overline{\omega(\varpi^s \tau_{z})} \frac{\log |q|}{2\pi i}\int_{|z|=1} \gamma(\pi \otimes \omega^{-1}\chi^{-1}|\cdot|^{s'})\omega^{-1}(y)\chi^{-1}(y)z^{v(y) - N - s}\frac{\,dz}{z}  \\
    &= \mathbb{E}_{c(\omega) \leq s} \overline{\omega(\varpi^s \tau_{z})} \epsilon(\chi_0\omega^{-1}\chi^{-1})\epsilon(\chi_0^{-1}\omega^{-1}\chi^{-1})\omega^{-1}\chi^{-1}(y)I(\flat, \sharp, N) \\
    &= \mathbb{E}_{c(\omega) \leq s} \omega^{-1}(\varpi^s \tau_{z})\chi^{-1}(y)\omega^{-1}(y) \epsilon(\chi_\flat\omega^{-1})\epsilon(\chi_\sharp \omega^{-1})1_{v(y) = N+s - f_\flat^\omega - f_\sharp^\omega},
\end{align*}
where $\epsilon_{f_\flat^\omega}$ and $\epsilon_{f_\sharp^\omega}$ are the epsilon factors of the characters $\chi_\flat \omega$ and $\chi_\sharp \omega$ and $$I(\flat, \sharp, N) = \frac{\log |q|}{2\pi i}\int_{|z|=1} z^{f_\flat^\omega + f_\sharp^\omega +v(y) - N - s}\frac{\,dz}{z}.$$
\end{proof}

\begin{rem}
	In the above proof, the crucial assumption, $s < \min(f_\sharp, f_\flat)$ ensures that twists by $\omega$ don't introduce $L$-factors so that we can write the gamma factor as 
\begin{equation}
    \gamma(1/2, \pi \otimes \nu) = \epsilon(1/2, \chi_\flat \omega^{-1}) \epsilon(1/2, \chi_\sharp \omega^{-1}) z^{f_\flat^\omega + f_\sharp^\omega},
\end{equation}
where $f_\flat^\omega$ and $f_\sharp^\omega$ are the conductors of $\chi_\flat \omega^{-1}$ and $\chi_\sharp \omega^{-1}$ respectively. This assumption should be automatically satisfied in the relevant ranges since the conductor of $\omega$ is small compared to the conductors of $\chi_\flat$ and $\chi_\sharp$. 
\end{rem}

\subsection{Main Proposition}
Recall, once again, that the strategy to calculate the relative character is to factor it using the Iwahori factorization as

\begin{align*}
\langle \Op(a)v_\chi^R, v_\chi^R \rangle = \langle \Op^+(a_+)\pi(w)v_\chi^R, \pi(w)\Op^0(a_0)\Op^+(a_-)v_\chi^R \rangle,	
\end{align*}
and compute this as $R \to \infty$. 

By the local functional equation, this can be written as 
\begin{align*}
\gamma(\pi \otimes \overline{\chi})a_0(\alpha_\chi)\langle \Op^+(a_+)v^R_{\chi^{-1}}, \pi(w)\Op^+(a_-)v_\chi^R\rangle. 	
\end{align*}
 We then case on the positions of $\tau_y$ and $\tau_z$. Recall when $\tau_z \not\in\OO$, we write $\tau_z \in \varpi^{-s}\OO^\times$ with $s > 0$ and when $\tau_z \in \OO$ we take $s = 0$, and similarly for $\tau_y$ and $r$. Recall the characters $\chi_\sharp = \chi_0^{-1}\chi^{-1}$ and $\chi_\flat = \chi_0\chi^{-1}$. We have the following proposition:
\begin{prop}\label{prop:table}
    Keep the assumptions of Theorem \ref{thm:main theorem}. Then the value of the inner product $\gamma(\pi\otimes \chi^{-1})\langle \Op^+(a_{\tau_+})v^R_{\chi^{-1}}, \pi(w)\Op^+(a_{\tau_-})v_\chi^R\rangle$ is given by the following table:
    \begin{table}[h]
\centering
\begin{tabular}{c|c|c}
\multicolumn{1}{c}{} & \multicolumn{2}{c}{$\tau_y$} \\
\cline{2-3}
$\tau_z$ & $\tau_y \in \mathcal{O}$ & $\tau_y \not\in \mathcal{O}$ \\
\hline
$\tau_z \in \mathcal{O}$ & 
\begin{minipage}{0.35\textwidth}
\vspace{0.5cm}
\centering
$\left(2N - c(\pi, \chi) \right)\1_{2N \geq c(\pi, \chi)}$
\vspace{0.5cm}
\end{minipage} & 
\begin{minipage}{0.35\textwidth}
\vspace{0.5cm}
\centering
$\frac{1}{|\mathcal{X}_r|}\1_{2N+r \geq c(\pi, \chi)}$
\vspace{0.5cm}
\end{minipage} \\
\hline
$\tau_z \not\in \mathcal{O}$ & 
\begin{minipage}{0.35\textwidth}
\vspace{0.5cm}
\centering
$\frac{1}{|\mathcal{X}_s|}\1_{2N+s \geq c(\pi, \chi)}$
\vspace{0.5cm}
\end{minipage} & 
\begin{minipage}{0.35\textwidth}
\vspace{0.5cm}
\centering
$\frac{1}{|\mathcal{X}_{\max(r,s)}|} \1_{U(min(r,s))}\left(\frac{\alpha_{\pi, \chi}}{T\tau_yT\tau_z}\right)$
\vspace{0.5cm}
\end{minipage} \\
\end{tabular}
\caption{Table of $\tau_y$ and $\tau_z$ relationships}
\label{tab:tau_relationships}
\end{table}
where $c(\pi, \chi) = f_\sharp + f_\flat$ in the principal series case and $fc(\xi\chi_E^{-1})$ in the supercuspidal case and $\alpha_{\pi, \chi} \in \varpi^{-c(\pi, \chi)}\OO^\times$ is equal to  $\alpha_{\chi_\sharp} \alpha_{\chi_\flat}$ in the principal series case and $\mathrm{Nm}(\alpha_{\xi\chi_E^{-1}})$ in the supercuspidal case.
\end{prop}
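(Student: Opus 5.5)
We compute the inner product directly in the Kirillov model, case by case according to whether $\tau_y$ and $\tau_z$ lie in $\OO$. The ingredients are already in place.

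\textbf{The two vectors.} On the right, $\Op^+(a_-)v_\chi^R$ is given by the explicit dichotomy computed above. It is either $v_\chi^R 1_{v(\cdot)\ge -N}$, or $v_\chi^R 1_{v(\cdot)=-N-r}\,\E_{\omega\in\mathcal X_r}\omega^{-1}(\varpi^r\tau_y)\omega(\varpi^{N+r}\cdot)$. We then apply $\pi(w)$ using the Corollary to Proposition \ref{prop:weylonshell} (supercuspidal) or to Proposition \ref{prop:principal series} (principal series) when $r=0$, and Proposition \ref{prop:nonintegerweyl} or its principal series analogue when $r>0$. The result is $\overline{\epsilon}$-type factors times $v^R_{\chi^{-1}}$ supported on $v(\cdot)\le N-c(\pi,\chi)$, or on the single shell $v(\cdot)=N+r-c(\pi\otimes\omega)$ twisted by $\omega^{-1}$. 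On the left, $\Op^+(a_+)v^R_{\chi^{-1}}$ is the same kind of truncation with $\tau_z$ and $s$ in place of $\tau_y$ and $r$, since $\ell_{\chi^{-1}}$ is multiplied by the same additive integral $\int_\OO\psi(u(\tau_z+T^{-1}h))\,du$.

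\textbf{Cases with an integral coordinate.} When $\tau_y,\tau_z\in\OO$, the inner product is $\int \chi^{-1}\overline{\chi^{-1}}\,d^\times y$ over the shells $-N\le v(y)\le N-c(\pi,\chi)$. Each shell has volume $1$, so this gives $(2N-c(\pi,\chi)+1)$ shells, up to the normalization convention. The prefactor is $\gamma(\pi\otimes\chi^{-1})$ times the conjugated epsilons, and it has absolute value $1$ and cancels by the Corollary $|\epsilon|^2=1$ together with the factorizations of $\gamma$. I expect the bookkeeping to reproduce exactly $(2N-c)\1_{2N\ge c}$. In the mixed cases, one side is a single shell at $v=-N-r$ (resp.\ $-N-s$) and carries an average over $\omega\in\mathcal X_r$. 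Orthogonality of characters on $\OO^\times$ against the untwisted $\chi^{-1}$ on the other side keeps only $\omega$ trivial. This gives $1/|\mathcal X_r|$, and the shell must lie in the range $v\le N-c$, which is the condition $2N+r\ge c$.

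\textbf{The doubly non-integral case.} This is the main obstacle. Both sides are single shells with $\omega$-averages:
\[
\E_{\omega\in\mathcal X_r}\,\E_{\omega'\in\mathcal X_s}\;\omega^{-1}(\varpi^r\tau_y)\,\omega'^{-1}(\varpi^s\tau_z)\,\epsilon(\pi\otimes\chi^{-1}\omega^{-1})\,\langle\omega'^{-1},\omega^{-1}\rangle .
\]
Orthogonality forces $\omega=\omega'$, so the sum runs over $\mathcal X_{\min(r,s)}$. The support conditions match automatically because $c(\pi\otimes\omega)=c(\pi,\chi)$ for small $\omega$, and this is where the hypothesis $|\tau_y|,|\tau_z|\le q^{c/2}$, i.e.\ $r,s\le c/2$, enters. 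The key step is Proposition \ref{prop:tate}, which in the supercuspidal case is applied over $E$ and combined with $\omega_E(\alpha)=\omega(\mathrm{Nm}\,\alpha)$. It gives $\epsilon(\pi\otimes\chi^{-1}\omega^{-1})=\omega^{-1}(\alpha_{\pi,\chi})\,\epsilon(\pi\otimes\chi^{-1})$. The leading epsilon cancels against $\gamma$ as before. What remains is $\E_{\omega}\,\omega(\alpha_{\pi,\chi}/(T\tau_yT\tau_z))$, after normalizing by the powers of $\varpi$ carried along. The lemma on indicators of $u_0+\p^m$ then turns this into $\1_{U(\min(r,s))}$, with the overall normalization $1/|\mathcal X_{\max(r,s)}|$.

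The delicate points are tracking the exact powers of $\varpi$: the factors $\omega(\varpi^N)$ and $\omega(\varpi^{N+s})$ should assemble into the $T^2$. One must also verify that the valuation constraint on the shells is compatible with $v(\alpha_{\pi,\chi})=-c$, so that the indicator is not vacuous. The principal series case runs the same way, with $\alpha_{\chi_\flat}\alpha_{\chi_\sharp}$ and two applications of Proposition \ref{prop:tate}, using $s<\min(f_\flat,f_\sharp)$ to avoid $L$-factors.
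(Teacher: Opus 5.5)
Your outline follows the paper's own proof: a shell-by-shell computation in the Kirillov model, orthogonality of the $\mathcal{X}_r$-averages in the mixed cases, and the twisting formula for $\epsilon$ in the doubly non-integral case. The mixed cases and the cancellation $\gamma(\pi\otimes\chi^{-1})\overline{\epsilon(1/2,\pi\otimes\chi^{-1})}=1$ go through as you describe.

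On the point you hedged, trust your count. With $\mathrm{vol}(\OO^\times,d^\times u)=1$, the shells $-N\le v(y)\le N-c(\pi,\chi)$ contribute $2N-c(\pi,\chi)+1$. No choice of normalization turns this into $2N-c(\pi,\chi)$, since rescaling $d^\times y$ rescales every entry of the table. The paper's phase-space integral has the same off-by-one: it evaluates $\int d^\times\xi'_y$ over $q^{-2N+c(\pi,\chi)}\le|\xi'_y|\le 1$ as if it were the archimedean $\int dx/x$. So Theorem~\ref{thm:main theorem} survives, but the diagonal entry should read $2N-c(\pi,\chi)+1$.

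The doubly non-integral case does not close as you have written it, in two places. The paper is no more careful here.

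\emph{The range of Proposition~\ref{prop:tate}.}
\begin{itemize}
\item After orthogonality the surviving characters lie in $\mathcal{X}_{\min(r,s)}$. In the supercuspidal case the twist is applied over $E$, so you need $c(\omega_E)\le c(\xi\chi_E^{-1})/2$.
\item Your bound $r,s\le c(\pi,\chi)/2$ does not give this. For $E/F$ unramified, $c(\pi,\chi)=2c(\xi\chi_E^{-1})$, so $c(\omega)$ can be as large as $c(\xi\chi_E^{-1})$. For $E/F$ ramified, $c(\omega_E)=2c(\omega)-1$ when $c(\omega)\ge 1$.
\item For principal series there is no problem: the theorem's bound $r,s\le\min(f_\flat,f_\sharp)/2$ is exactly what is needed.
\item What works in the supercuspidal case: only the shell with $2N+r+s=c(\pi,\chi)$ survives. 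Combine this with $r,s\le N$, which is the condition $|T\tau|\le q^{2N}$ under which $\Op(a_\tau)$ was factored. That condition is not among the stated hypotheses and has to be imposed. Then $\min(r,s)\le\min(N,\,c(\pi,\chi)/2-N)\le c(\pi,\chi)/4$, which suffices for both unramified and ramified $E$.
\end{itemize}

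\emph{The signs.} Three conventions have to be fixed.
\begin{itemize}
\item The vector supported on $h\in T\tau(1+\p^r)$ expands as $\E_{\omega}\omega^{-1}(T\tau)\,\omega$. The sign convention on $\tau$ cancels in the product $\tau_y\tau_z$.
\item The coefficients of the right-hand vector, including $\epsilon(\pi\otimes\chi^{-1}\omega^{-1})$, enter the inner product conjugated.
\item $\epsilon(\mu)$ is proportional to $\int\mu^{-1}(x)\psi(x)\,d^\times x$ over $v(x)=-c(\mu)$, and this integral localizes at $x\approx\alpha_\mu$. Hence the twisting formula reads $\epsilon(\mu\omega)=\omega^{-1}(\alpha_\mu)\epsilon(\mu)$, the opposite direction from Proposition~\ref{prop:tate} as printed.
\end{itemize}
With all three in place, the surviving coefficient is $\omega(T^2\tau_y\tau_z/\alpha_{\pi,\chi})\,\overline{\epsilon(1/2,\pi\otimes\chi^{-1})}$. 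Its argument is a unit exactly on the surviving shell, and averaging gives $\frac{1}{|\mathcal{X}_{\max(r,s)}|}\1_{U(\min(r,s))}(\alpha_{\pi,\chi}/(T\tau_yT\tau_z))$.

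Your display, taken literally, yields $\omega^{-1}(\varpi^{r+s}\tau_y\tau_z\alpha_{\pi,\chi})$ instead. Its unit part is not the one cut out by $\xi_y\xi_z=\alpha_{\pi,\chi}$, so the expression you then assert does not follow from it.
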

\section{Proof of Proposition \ref{prop:table}}\label{sec:proofofprop}
\subsection{Supercuspidal Case}
\begin{proof}
We start with the $\tau_y, \tau_z \in \OO$ case. Then 
\begin{align*}
&\langle \Op^+(a_-)v^R_{\chi^{-1}}, \pi(w)\Op^+(a_+)v_\chi^R \rangle = \langle \Op^+(a_-)v^R_{\chi^{-1}}, \pi(w)v_\chi^R 1_{v(\cdot) \geq -N} \rangle \\
&= \langle v^R_{\chi^{-1}}1_{v(\cdot) \geq -N}, \epsilon(1/2, \xi \chi_E^{-1}, \psi_E) v^R_{\chi^{-1}}(y) 1_{v(y) \leq N-fc(\xi\chi_E^{-1})}\rangle \\
&= \overline{\epsilon(1/2, \xi\chi_E^{-1}, \psi_E)}\left(2N - fc(\xi\chi_{E}^{-1})\right)
\end{align*}

The next case is when $\tau_y \in \OO$ and $\tau_z \in \varpi^{-s}\OO^\times$ for some $s > 0$. Then 
\begin{align*}
&\langle \Op^+(a_-)v^R_{\chi^{-1}}, \pi(w)\Op^+(a_+)v_\chi^R \rangle = \langle \Op^+(a_-)v^R_{\chi^{-1}}, \epsilon(\xi\chi_E^{-1})v^R_{\chi^{-1}}1_{v(\cdot) \leq N - fc(\xi\chi_E^{-1})}\rangle	\\
&= \langle v^R_{\chi^{-1}} 1_{v(\cdot) = -N-s}\E_{\omega \in \mathcal{X}_s}\omega^{-1}(\varpi^s \tau_z)\omega(\varpi^{N+s})\omega, v^R_{\chi^{-1}}\epsilon(\xi\chi_E^{-1})1_{v(\cdot) \leq N - fc(\xi\chi_E^{-1})}\rangle.
\end{align*}

By linearity this inner product is the same as 
\begin{align*}
	&\left(\overline{\epsilon(\xi\chi_E^{-1})}\E_{\omega \in \mathcal{X}_s}\omega^{-1}(\varpi^s\tau_z)\omega(\varpi^{N+s})\right)\\
    &\times\int_{F^\times} 1_{[-R, R]}\overline{\chi}(y)\omega(y)\chi(y)1_{v(y) = -N-s}1_{v(y) \leq N - fc(\xi\chi_E^{-1})}\,d^\times y\\
	&=\overline{\epsilon(\xi\chi_E^{-1})}\E_{\omega \in \mathcal{X}_s} \omega^{-1}(\varpi^s \tau_z)1_{\omega = |\cdot|^{s'}}1_{-N-s \leq N - fc(\xi\chi_E^{-1})},
\end{align*}	
for some $s' \in \C$ since $\int_{\OO^\times} \omega(u)\,d^\times u$ is only nonzero if $\omega$ is unramified. Therefore, we have calculated that in this case 
\begin{align*}
\langle \Op^+(a_-)v^R_{\chi^{-1}}, \pi(w)\Op^+(a_+)v_\chi^R \rangle = \frac{\epsilon(\xi\chi_E^{-1})}{|\mathcal{X}_s|}1_{2N+s \geq fc(\xi\chi_E^{-1})}.	
\end{align*}

Next case is when $\tau_y \in \varpi^{-r}\OO^\times$ for some $r > 0$ and $\tau_z \in \OO$. This case is similar. We have 
\begin{equation*}
    \langle \Op^+(a_-)v^R_{\chi^{-1}}, \pi(w)\Op^+(a_+)v_\chi^R \rangle.
\end{equation*}
The right-hand side of the inner product is given by 
\begin{equation*}
    \pi(w)v_\chi^R  1_{v(\cdot) = -N-r}\E_{\omega \in \mathcal{X}_r} \omega^{-1}(\varpi^r \tau_y) \omega(\varpi^{N+r}) \omega
\end{equation*}
while the left-hand side is just $v^R_{\chi^{-1}}1_{v(\cdot) \geq -N}$. Therefore, the inner product is equal to 
\begin{align*}
	&\langle  v^R_{\chi^{-1}}1_{v(\cdot) \geq -N}, v^R_{\chi^{-1}} 1_{v(\cdot) = N+r - fc(\xi\chi_E^{-1}\omega_{E}^{-1})} \E_{\omega \in \mathcal{X}_r} \epsilon(\xi\chi_E^{-1}\omega_E^{-1}) \omega^{-1}(\tau_y)\omega(\varpi^N)\omega\rangle \\
	&= \frac{\epsilon(\xi\chi_E^{-1})}{|\mathcal{X}_r|}1_{2N+r \geq fc(\xi \chi_E^{-1})},
\end{align*}
again since the inner product picks out $\omega = 1$. 

The last case is when $\tau_y \in \varpi^{-r}\OO^\times$ and $\tau_z \in \varpi^{-s}\OO^\times$, with $r, s > 0$. This gives
\begin{equation*}
    \langle \Op^+(a_-)v^R_{\chi^{-1}}, \pi(w)\Op^+(a_+)v_\chi^R \rangle.
\end{equation*}
The left-hand side of this inner product is 
\begin{equation*}
    v^R_{\chi^{-1}} 1_{v(\cdot) = -N-s}\E_{\omega \in \mathcal{X}_s}\omega^{-1}(\varpi^s \tau_z)\omega(\varpi^{N+s})\omega
\end{equation*}
while the right-hand side is 
\begin{equation*}
    v^R_{\chi^{-1}} 1_{v(\cdot) = N+r - fc(\xi\chi_E^{-1}\eta_{E}^{-1})} \E_{\eta \in \mathcal{X}_r} \epsilon(\xi\chi_E^{-1}\eta_E^{-1}) \eta^{-1}(\varpi^r\tau_y)\eta(\varpi^{N+r})\eta.
\end{equation*}
Combining the two gives 
\begin{align*}
	& \E_{\omega \in \mathcal{X}_s}\E_{\eta \in \mathcal{X}_r}\overline{\epsilon(\xi\chi_E^{-1}\eta_E^{-1})}\omega^{-1}(\varpi^s \tau_z)\eta(\varpi^r \tau_y)I(\flat, \sharp, N, r, s) \\
	&= \E_{\omega \in \mathcal{X}_s}\E_{\eta \in \mathcal{X}_r} \overline{\epsilon(\xi\chi_E^{-1}\eta_E^{-1})} \eta(\varpi^{s+r}\tau_y\tau_z)1_{2N + r + s = fc(\xi\chi_E^{-1}\eta_E^{-1})}\1_{\omega^{-1}=\eta},
\end{align*}
where
\begin{equation*}
    I(\flat, \sharp, N, r, s) = \int_{F^\times}\omega(y) \overline{\eta}(y)1_{v(y) = -N-s}1_{v(y) = N+r - fc(\xi\chi_E^{-1}\eta_E^{-1})}\,d^\times y
\end{equation*}

We need to understand how the epsilon factor $\epsilon(1/2, \xi\chi_E^{-1}, \psi_E)$ behaves under twists of characters $\eta_E$ with small conductor. How small is the conductor? The character $\eta$ has conductor at most $r$, and $r < fc(\xi\chi_E^{-1}\eta_E^{-1}) - 2N$ since $s > 0$. For $N$ large enough, the characteristic function $1_{2N + r + s = fc(\xi\chi_E^{-1}\eta_E^{-1})}$ is zero, so the assumption that $r$ is much smaller than the conductor is not very strict. Therefore, we assume $r \leq c(\xi\chi_E^{-1})/2$. This gives $c(\xi\chi_E^{-1}\eta_E^{-1}) = c(\xi\chi_E^{-1})$. Furthermore, let $\alpha_{\xi\chi_E^{-1}} \in \varpi^{-c(\xi\chi_E^{-1})}\OO^\times$ such that $\xi\chi_E^{-1}(1+x) = \psi(\alpha_{\xi\chi_E^{-1}}x)$ for $x \in \p^{c(\xi\chi_E^{-1})/2}$. Now since $c(\eta_E) \leq r \leq c(\xi\chi_E^{-1})/2$, by Proposition \ref{prop:tate}
\begin{align*}
\epsilon(1/2, \xi\chi_E^{-1}\eta_E^{-1}, \psi_E) = \eta_E^{-1}(\alpha_{\xi\chi_E^{-1}})\epsilon(1/2, \xi\chi_E^{-1}, \psi_E).
\end{align*}
Note that 
\begin{align*}
\eta_E^{-1}(\alpha_{\xi\chi_E^{-1}}) = \eta^{-1}(\alpha_{\pi, \chi}).	
\end{align*}
The condition $r \leq c(\xi\chi_E^{-1})/2$ in light of $\omega^{-1} = \eta$ forces $s \leq c(\xi\chi_E^{-1})/2$. Then from the characteristic function, we have
\begin{align*}
fc(\xi\chi_E^{-1}) = 2N + r + s \leq 2N + c(\xi\chi_E^{-1}).	
\end{align*}

Since $E$ is a quadratic extension $f \in \{1,2\}$. If $f = 1$ we obtain $N \geq 0$ and if $f = 2$, we obtain the "uncertainty principle" $N \geq \frac{c(\xi\chi_E^{-1})}{2}$.  So now our inner product is given by 
\begin{align*}
\overline{\epsilon(\xi\chi_E^{-1})} \E_{\omega \in \mathcal{X}_s} \E_{\eta \in \mathcal{X}_r}\1_{\eta = \omega^{-1}} \eta(\varpi^{s+r}\tau_y\tau_z\alpha_{\pi, \chi})1_{2N+r+s = fc(\xi\chi_E^{-1})}.
\end{align*}

Suppose without loss of generality that $s < r$. The above can then be written
\begin{align*}
	\overline{\epsilon(\xi\chi_E^{-1})} \E_{\eta \in \mathcal{X}_r} \E_{\omega \in \mathcal{X}_s} 1_{\eta = \omega^{-1}}\omega^{-1}(\varpi^{s+r}\tau_y\tau_z\alpha_{\pi, \chi})1_{2N+r+s = fc(\xi\chi_E^{-1})}.
\end{align*} 
The average over $\omega$ forces the argument to lie in $U(s)$, and the average over $\eta$ contributes $\frac{1}{|\mathcal{X}_r|}$, which yields the desired result.
\end{proof}

\subsection{Principal Series Case}
\begin{proof}
We start with the $\tau_y, \tau_z \in \OO$. Then 
\begin{align*}
\langle \Op^+(a_-)v^R_{\chi^{-1}}, \pi(w)\Op^+(a_+)v_\chi^R \rangle &= \langle \Op^+(a_-)v^R_{\chi^{-1}}, \pi(w)(v_\chi^R 1_{v(\cdot) \geq -N}) \rangle \\
&= \langle v^R_{\chi^{-1}}1_{v(\cdot) \geq -N}, \epsilon_\sharp \epsilon_\flat v^R_{\chi^{-1}}1_{v(\cdot) \leq N - f_\flat - f_\sharp}\rangle \\
&= \overline{\epsilon(1/2, \chi_\sharp)\epsilon(1/2, \chi_\flat)}(2N - f_\flat - f_\sharp)\1_{2N\geq f_\sharp + f_\flat}.
\end{align*}

The next case is when $\tau_y \in \OO$ and $\tau_z \in \varpi^{-s}\OO^\times$ for some $s > 0$. Then 
\begin{align*}
&\langle \Op^+(a_-)v^R_{\chi^{-1}}, \pi(w)\Op^+(a_+)v_\chi^R\rangle = \langle \Op^+(a_-)v^R_{\chi^{-1}}, \epsilon_\sharp\epsilon_\flat v^R_{\chi^{-1}}1_{v(\cdot) \leq N - f_\sharp - f_\flat}\rangle \\
&= \langle  v^R_{\chi^{-1}}1_{v(\cdot) = -N-s} \E_{\omega \in \mathcal{X}_s} \omega^{-1}(\varpi^r\tau_y)\omega(\varpi^{N+s})\omega , \epsilon_\sharp\epsilon_\flat v^R_{\chi^{-1}}1_{v(\cdot) \leq N - f_\sharp - f_\flat}\rangle.
\end{align*}
By linearity, this inner product is the same as 	
\begin{align*}
	\overline{\epsilon_\sharp\epsilon_\flat}\E_{\omega \in \mathcal{X}_s}\omega^{-1}(\varpi^s\tau_z)\omega(\varpi^{N+s})&\int_{F^\times} 1_{[-R, R]}\overline{\chi}(y)\omega(y)\chi(y)1_{v(y) = -N-s}1_{v(y) \leq N - f_\sharp - f_\flat}\,d^\times y\\
	&=\overline{\epsilon_\sharp \epsilon_\flat}\E_{\omega \in \mathcal{X}_s} \omega^{-1}(\varpi^s \tau_z)1_{\omega = |\cdot|^{s'}}1_{-N-s \leq N - f_\flat - f_\sharp},
\end{align*}	
for some $s' \in \C$ since $\int_{\OO^\times} \omega(u)\,d^\times u$ is only nonzero if $\omega$ is unramified. Therefore, we have calculated that in this case 
\begin{align*}
\langle \Op^+(a_-)v^R_{\chi^{-1}}, \pi(w)\Op^+(a_+)v_\chi^R \rangle = \frac{\overline{\epsilon(1/2, \chi_\sharp)\epsilon(1/2, \chi_\flat)}}{|\mathcal{X}_s|}1_{2N+s \geq f_\flat + f_\sharp}.	
\end{align*}

Next case is when $\tau_y \in \varpi^{-r}\OO^\times$ for some $r > 0$ and $\tau_z \in \OO$. This case is very similar. We have 
\begin{equation*}
    \langle \Op^+(a_-)v^R_{\chi^{-1}}, \pi(w)\Op^+(a_+)v_\chi^R \rangle.
\end{equation*}
The right-hand side of the inner product is given by 
\begin{equation*}
    \pi(w)v_\chi^R  1_{v(\cdot) = -N-r}\E_{\omega \in \mathcal{X}_r} \omega^{-1}(\varpi^r \tau_y) \omega(\varpi^{N+r}) \omega,
\end{equation*}
while the left-hand side of the inner product is just $v^R_{\chi^{-1}}1_{v(\cdot) \geq -N}$. Therefore the inner product evaluates to 
\begin{align*}
	&\langle  v^R_{\chi^{-1}}1_{v(\cdot) \geq -N}, v^R_{\chi^{-1}} 1_{v(\cdot) = N+r - f_\sharp - f_\flat} \E_{\omega \in \mathcal{X}_r} \epsilon_\flat \epsilon_\sharp \omega^{-1}(\tau_y)\omega(\varpi^N)\omega\rangle \\
	&= \frac{\overline{\epsilon(1/2, \chi_\flat)\epsilon(1/2, \chi_\sharp)}}{|\mathcal{X}_r|}1_{2N+r \geq f_\flat + f_\sharp},
\end{align*}
again since the inner product picks out $\omega = 1$. 

Finally, we tackle the case when $\tau_y \in \varpi^{-r}\OO^\times$ and $\tau_z \in \varpi^{-s}\OO^\times$, with $r,s > 0$. This gives
\begin{equation*}
    \langle \Op^+(a_-)v^R_{\chi^{-1}}, \pi(w)\Op^+(a_+)v_\chi^R \rangle.
\end{equation*}
The right-hand side of the inner product is given by 
\begin{equation*}
    v^R_{\chi^{-1}}1_{v(y) = N+r - f_\flat^\eta - f_\sharp^\eta}\mathbb{E}_{\eta \in \mathcal{X}_r} \eta^{-1}(\varpi^s \tau_{ij})\eta^{-1}(y) \epsilon(\chi_\flat\eta^{-1})\epsilon(\chi_\sharp\eta^{-1})
\end{equation*}
while the left-hand side is given by 
\begin{equation*}
    v^R_{\chi^{-1}}1_{v(\cdot) = -N - s} \mathbb{E}_{\omega \in \mathcal{X}_s} \omega^{-1}(\varpi^s\tau_z)\omega(\varpi^{N+s})\omega.
\end{equation*}
Combining the two to compute the full inner product yields 
\begin{align*}
& \mathbb{E}_{\omega \in \mathcal{X}_s} \mathbb{E}_{\eta \in \mathcal{X}_r} \overline{\epsilon(\chi_\flat\eta^{-1})\epsilon(\chi_\sharp\eta^{-1})} \omega^{-1}(\varpi^s\tau_z)\eta(\varpi^r\tau_y)\ I(\flat, \sharp, N, r) \\
&= \mathbb{E}_{\omega \in \mathcal{X}_s} \mathbb{E}_{\eta \in \mathcal{X}_r} \overline{\epsilon(\chi_\flat\eta^{-1})\epsilon(\chi_\sharp\eta^{-1})} \1_{2N + r + s = f_\flat + f_\sharp} \1_{\omega^{-1} = \eta},
\end{align*}
where 
\begin{equation*}
    I(\flat, \sharp, N, r) = \int_{\F^\times} \omega(y)\overline{\eta}(y)\1_{v(y) = -N - r}\1_{v(y) = N + r - f_\sharp - f_\flat}\,d^\times y
\end{equation*}

We now need to understand how the epsilon factors $\epsilon(1/2, \chi_\flat \eta^{-1})$ and $\epsilon(1/2, \chi_\sharp \eta^{-1})$ behave under twists by characters $\eta$ with small conductor. Since $\eta$ has conductor at most $r$, and $s > 0$, we have $r < f_\sharp + f_\flat - 2N$. For $N$ large enough, the characteristic function $\1_{2N + r + s = f_\flat + f_\sharp}$ is zero, so we can and do assume $r$ is much smaller than $f_\flat$ and $f_\sharp$, without losing much. Therefore, $c(\chi_\sharp \eta^{-1}) = c(\chi_\sharp)$ and $c(\chi_\flat \eta^{-1}) = c(\chi_\flat)$. Let $\alpha_\flat := \alpha_{\chi_\flat}$ and $\alpha_\sharp := \alpha_{\chi_\sharp}$. Since $c(\eta) \leq r \leq \min(f_\flat/2, f_\sharp/2)$, we have
\begin{align*}
\epsilon(1/2, \chi_\flat \eta^{-1}, \psi) &= \eta^{-1}(\alpha_\flat)\epsilon(1/2, \chi_\flat, \psi)	\\
\epsilon(1/2, \chi_\sharp \eta^{-1}, \psi) &= \eta^{-1}(\alpha_\sharp)\epsilon(1/2, \chi_\sharp, \psi).
\end{align*}
The condition $r \leq \min(f_\flat/2, f_\sharp/2)$ yields $s \leq \min(f_\flat/2, f_\sharp/2)$ by symmetry. So we have
\begin{align*}
f_\flat + f_\sharp = 2N + r + s \leq 2N + 2\min(f_\flat/2,f_\sharp/2).
\end{align*}
This implies ``uncertainty principle" $N \geq \max(f_\flat/2, f_\sharp/2)$. So the inner product is given by 
\begin{align*}
\overline{\epsilon(1/2, \chi_\flat)\epsilon(1/2, \chi_\sharp)}\E_{\omega \in \mathcal{X}_s}\E_{\eta \in \mathcal{X}_r} \1_{\eta = \omega^{-1}} \eta(\varpi^{s+r}\tau_y\tau_z \alpha_\sharp^{-1}\alpha_\flat^{-1})\1_{2N + r + s = f_\flat + f_\sharp}.
\end{align*}
Suppose without loss of generality that $s < r$. The above can then be written as 
\begin{align*}
\overline{\epsilon(1/2, \chi_\flat)\epsilon(1/2, \chi_\sharp)}\E_{\omega \in \mathcal{X}_s}\E_{\eta \in \mathcal{X}_r}\1_{\eta = \omega^{-1}} \omega^{-1}(\varpi^{s+r}\tau_y\tau_z \alpha_\sharp^{-1}\alpha_\flat^{-1})\1_{2N + r + s = f_\flat + f_\sharp}.
\end{align*}
The average over $\omega$ forces the argument of $\omega^{-1}$ to lie in $U(s)$ and the average over $\eta$ contributes $\frac{1}{|\mathcal{X}_r|}$, which yields the desired result. 
\end{proof}

\section{Phase Space Integral}\label{sec:integral}
 We will now complete the proof of the main theorem \ref{thm:main theorem}. We'd like to relate our calculations of the relative characters to integrals over ``phase space" of our functions $a$. In other words, we plan to complete the proof of the equality
\begin{align*}
\langle \Op(a)v_\chi, v_\chi \rangle = \int_{\mathrm{Hyp}(\pi, \chi)} a(\xi)\,d\xi	
\end{align*}
by calculating the right hand side. It suffices to take $a = a_\tau = 1_\tau^T$ by Proposition \ref{prop:wavepacketdecomp}. The map 
\begin{align*}
\mathrm{Hyp} : \{\text{ Pairs of irreps of } (G,H)	\} &\to \mathrm{Lie}(G)^\ast \\
(\pi, \sigma) & \mapsto \mathrm{Hyp}(\pi, \sigma),
\end{align*}
is meant to map the data of $\pi$ and $\sigma = \chi$ to the hyperbola that we integrate over. For us, we have
\begin{align*}
\mathrm{Hyp}(\pi, \chi) = \{(x,y,z) \ : \ x = \alpha_\chi, \ yz = \alpha_{\pi, \chi}\},
\end{align*}
where $c(\pi, \chi) = f_\flat + f_\sharp$ if $\pi$ is principal series and $c(\pi, \chi) = fc(\xi\chi_E^{-1})$ if $\pi$ is supercuspidal. We will take the measure $\,d\xi$ to be the multiplicative Haar measure on $F^\times$.
Now we integrate the characteristic function 
\begin{align*}
a_\tau(\xi)  = \1_{B(T\tau, T)}(\xi)	
\end{align*}
 of the box of size $T$ around the rescaled microlocal frequency $T\tau$. Like before, we first notice that if $T\tau_x$ must be within $T$ of $\alpha_\chi$, otherwise the whole integral is zero (the support of the characteristic function won't overlap $\mathrm{Hyp}(\pi, \chi)$ in that case. Let's again case on when $\tau_y, \tau_z \in \OO$. 

If $\tau_y, \tau_z \in \OO$, then for instance, $\xi_y - T\tau_y \in T\OO \implies T^{-1}\xi_y \in \OO$. Similarly $T^{-1}\xi_z \in \OO$. So we can write $\xi_y = T\xi'_y$ and $\xi_z = T\xi_z'$ with $\xi'_y, \xi'_z \in \OO$. Then  
\begin{align*}
    \xi_y \xi_z = \alpha_{\pi, \chi} \implies T\xi'_yT\xi'_z = \alpha_{\pi, \chi} \implies \xi'_y\xi'_z = \varpi^{2N }\alpha_{\pi, \chi}
\end{align*}
So we have
\begin{align*}
1 \geq |\xi_y'| \geq q^{-2N +c(\pi, \chi)},	
\end{align*}
since $|\xi_z| \leq 1$. So we must have $2N \geq c(\pi, \chi)$, otherwise there is no such $\xi_y$. Hence, the integral becomes 
\begin{align*}
\int_{q^{-2N +c(\pi, \chi)}}^1 \frac{\,d\xi_y'}{|\xi_y'|} = 2N - c(\pi, \chi), 
\end{align*}
and our contribution in total is 
\begin{align*}
(2N - c(\pi, \chi))\times \1_{2N \geq c(\pi, \chi)}.	
\end{align*}

We next do the case when $\tau_y, \tau_z \in F \setminus \OO$. In this case, we have $T^{-1}\xi_y \in \tau_y + \OO$ and $T^{-1}\xi_z \in \tau_z + \OO$. Let $|\tau_y| = q^r$ and $|\tau_z| = q^s$. Then we can write $\xi_y = T\tau_yu_y$ and $\xi_z = T\tau_zu_z$ with $u_y \in U(r)$ and $u_z \in U(s)$. Then we have

\begin{align*}
\xi_y\xi_z = \alpha_{\pi, \chi} \implies u_yu_z = \frac{\varpi^{2N}}{\tau_y\tau_z}\alpha_{\pi, \chi} = \frac{\alpha_{\pi, \chi}}{T\tau_yT\tau_z}
\end{align*}

Now suppose $s < r$. Then the right-hand side of the above equation lies in $U(s)$. The stricter congruence is then $u_y \in U(r)$ and for each choice there is exactly one $u_z \in U(s)$ satisfying the above equation. The integral is scaling invariant, because we use the measure $\frac{\,dy}{y}$, so in this range we need to compute
\begin{align*}
\int_{u_yu_z = \frac{\varpi^{2N-c(\pi, \chi)}}{\tau_y\tau_z\alpha_{\pi, \chi}}} \,d^\times u_y = \mathrm{vol}(U(r)) = \frac{1}{q^r(1-q^{-1})}. 
\end{align*}

In total we get 
\begin{align*}
\frac{1}{q^r(1-q^{-1})} \times \1_{U(s)}\left(\frac{\varpi^{2N}}{\tau_y\tau_z}\alpha_{\pi, \chi}.\right)	
\end{align*}

The next case is when $\tau_y \in \F \setminus \OO$ and $\tau_z \in \OO$. This is just the $s = 0$ version of the previous case. In this case $U(s) = U(0) = \OO$ by convention, and this gives rise to the condition that $2N +r - c(\pi, \chi) \geq 0$. So in this case the contribution to the integral is
\begin{align*}
\frac{1}{q^r(1-q^{-1})} \times \1_{2N + r \geq c(\pi, \chi)}.	
\end{align*}

The final case is when $\tau_y \in \OO$ and $\tau_z \in F\setminus\OO$. This case is completely symmetric except now $r = 0$ and $s > r$. So by the exact same argument we get
\begin{align*}
\frac{1}{q^s(1-q^{-1})}\times \1_{2N + s \geq c(\pi, \chi)}.	
\end{align*}

\end{document}